\newcommand{\linenumberscmd}{}
\newtheorem{thm}{Theorem}[section]
\newtheorem{cor}[thm]{Corollary}
\newtheorem{lem}[thm]{Lemma}
\newtheorem{prop}[thm]{Proposition}
\newtheorem{proposition}[thm]{Proposition}
\newtheorem{claim}[thm]{Claim}
\theoremstyle{definition}
\newtheorem{defn}[thm]{Definition}
\newtheorem{quest}[thm]{Question}
\newtheorem{rem}[thm]{Remark} 
\numberwithin{equation}{thm}
\newcommand{\N}{\mathbb{N}}
\newcommand{\setN}{\mathbb{N}}
\newcommand{\res}{\mathbin{\upharpoonright}}
\newcommand{\convergesto}{\mathbin{\downarrow =}}
\newcommand{\M}{\mathscr{M}}
\newcommand{\Lang}{\mathsf{L}}
\newcommand{\0}{{\bf 0}}
\newcommand{\om}{\omega}
\newcommand{\emp}{\emptyset}
\newcommand{\RCA}{\mathsf{RCA}_0}
\newcommand{\RCAo}{\mathsf{RCA}_0}
\newcommand{\ACA}{\mathsf{ACA}_0}
\newcommand{\ACAo}{\mathsf{ACA}_0}
\newcommand{\WKL}{\mathsf{WKL}_0}
\newcommand{\WKLo}{\mathsf{WKL}_0}
\newcommand{\ATRo}{\mathsf{ATR}_0}
\newcommand{\CA}{\mathsf{CA}_0}
\newcommand{\QF}{\mathsf{QF}}
\newcommand{\RT}{\mathsf{RT}}
\newcommand{\SRT}{\mathsf{SRT}}
\newcommand{\CAC}{\mathsf{CAC}}
\newcommand{\ADS}{\mathsf{ADS}}
\newcommand{\SADS}{\mathsf{SADS}}
\newcommand{\DNR}{\mathsf{DNR}}
\newcommand{\COH}{\mathsf{COH}}
\newcommand{\ZL}[1]{\mathsf{ZL}\text{-}\mathsf{#1}}
\newcommand{\IP}{\mathsf{IP}}
\newcommand{\FCP}{\mathsf{FCP}}
\newcommand{\AMT}{\mathsf{AMT}}
\newcommand{\OPT}{\mathsf{OPT}}
\newcommand{\NCE}{\mathsf{NCE}}
\newcommand{\cl}{\operatorname{cl}}
\let\oldsetminus-
\renewcommand{\setminus}{{\oldsetminus}}
\newcommand{\chat}[1]{\widehat{#1}}
\title{Reverse mathematics and equivalents of the axiom of choice}
\author{Damir D. Dzhafarov}
\address{\hspace*{-\parindent}Department of Mathematics\\
  University of Chicago\\
  5734 South University Avenue\\
  Chicago, Illinois 60637 USA} \email{damir@math.uchicago.edu}
\author{Carl Mummert}
\address{\hspace*{-\parindent}Department of Mathematics\\
  Marshall University\\
  1 John Marshall Drive\\
  Huntington, West Virginia 25725 USA} \email{mummertc@marshall.edu}
\thanks{The authors are grateful to Denis Hirschfeldt, Antonio
  Montalb\'{a}n, and Robert Soare for valuable comments and
  suggestions.  The first author was partially supported by an NSF
  Graduate Research Fellowship.}
\date{September 30, 2010} 
\begin{document}

\begin{abstract}
  We study the reverse mathematics of countable analogues of several
  maximality principles that are equivalent to the axiom of choice in
  set theory.  Among these are the principle asserting that every
  family of sets has a $\subseteq$-maximal subfamily with the finite
  intersection property and the principle asserting that if $\varphi$ is a
  property of finite character then every set has a
  $\subseteq$-maximal subset of which $\varphi$ holds. We show that these
  principles and their variations have a wide range of strengths in
  the context of second-order arithmetic, from being equivalent to
  $\mathsf{Z}_2$ to being weaker than $\mathsf{ACA}_0$ and
  incomparable with $\mathsf{WKL}_0$.  In particular, we identify a
  choice principle that, modulo $\Sigma^0_2$ induction, lies strictly
  below the atomic model theorem principle $\mathsf{AMT}$
 and implies the omitting partial
  types principle~$\mathsf{OPT}$.  
\end{abstract}

\maketitle

\tableofcontents

\linenumberscmd

\section{Introduction} 

A large number of statements in set theory are equivalent to the axiom
of choice over Zermelo--Fraenkel set theory ($\mathsf{ZF}$). In this
paper, we examine what happens when some of these statements are
interpreted in the setting of second-order arithmetic, where the only
``sets'' available are sets of natural numbers. This interpretation
allows us to study computability-theoretic and proof-theoretic aspects
of choice principles in the spirit of reverse mathematics. Our results
show that the re-interpreted statements need not be trivial, as might
be suspected.  Instead, these principles demonstrate a
wide range of reverse mathematical strengths.

The history of the axiom of choice is presented in detail by
Moore~\cite{Moore-1982}. The main facet of interest for our
purposes is that,
 after Zermelo introduced the axiom of choice in 1904, set
theorists began to obtain results proving other set-theoretic
principles equivalent to it (relative to choice-free axiomatizations
of set theory). These equivalence results, and their further
development, now constitute a program in set theory, which
has been documented in
detail by Jech~\cite{Jech-1973} and by Rubin and Rubin~\cite{RR-1970,
  RR-1985}.

This program provides us with a large collection of statements from
which to choose.  We begin in Section~\ref{S:ZL} with Zorn's lemma,
which is perhaps the most well-known equivalent of the axiom of choice
but which turns out to be of only limited interest in second-order
arithmetic. In Sections~\ref{S:FIP} and~\ref{S:FCP}, we turn to
other maximality principles with more complex and interesting
behavior. Our focus is on statements closely related to the following two
equivalents of the axiom of choice:
\begin{itemize}
\item every family of sets has a $\subseteq$-maximal subfamily with
  the finite intersection property;
\item if $\varphi$ is a property of finite character and $A$ is any set,
  there is a $\subseteq$-maximal subset $B$ of $A$ such that $B$ has property~$\varphi$.
\end{itemize}

We avoid studying principles that concern countable
well-orderings. Such principles have been thoroughly explored in the
context of reverse mathematics by Friedman and Hirst~\cite{HF-1990}
and by Hirst~\cite{Hirst-2005}.  We also do not study direct
formalizations of choice principles in arithmetic. These have been
studied by Simpson~\cite[Section VII.6]{Simpson-2009}. 

The rest of this section is devoted to a brief overview of
second-order arithmetic and reverse mathematics.  We refer the reader
to \mbox{Simpson~\cite{Simpson-2009}} for complete details
on second-order arithmetic and to Soare~\cite{Soare-1987}
for background information on computability theory.

\subsection{Second-order arithmetic}

Second-order arithmetic is, intuitively, a weak form of type theory in
which there are only two kinds of primitive objects: natural numbers
and sets of natural numbers. This system is sufficiently expressive
that many theorems of classical mathematics can be formalized within
it, provided that the theorems are put in an arithmetical context
through appropriate coding conventions and countability assumptions.

We work in the language $\Lang_2$ of second-order arithmetic, which
has the signature $\langle 0, 1, +, \times, <, =_{\setN}, \in\rangle$.
Equality for sets of numbers is defined by extensionality: $X = Y$ is
an abbreviation for $\forall n\, (n \in X \leftrightarrow n \in Y)$.
The set of $\Lang_2$ formulas is ramified into the arithmetical and
analytical hierarchies, which are used to define induction and
comprehension schemes.

The full second-order \textit{induction scheme} consists of every
instance of
\[ 
(\varphi(0) \land (\forall n)[\varphi(n) \to \varphi(n+1)]) 
  \to (\forall n)\, \varphi(n),
\]
in which $\varphi$ is an $\Lang_2$-formula, possibly with set
parameters.  If $\Gamma$ is $\Sigma^i_n$ or $\Pi^i_n$ for some $i \in
\{0,1\}$ and $n \geq 0$, the scheme of \emph{$\Gamma$ induction}
($\mathsf{I}\Gamma$) consists of the restriction of the induction
scheme to formulas in~$\Gamma$.

The full second-order \textit{comprehension scheme} consists of every
instance of
\[ 
  (\exists X)(\forall n)[n \in X \leftrightarrow \varphi(n)]
\] 
in which $\varphi$ is an $\Lang_2$-formula that does not
mention~$X$ but may have other set parameters.  If $\Gamma$ is
$\Sigma^i_n$ or $\Pi^i_n$, where $i \in \{0,1\}$ and $n \geq 0$, the
scheme of \textit{$\Gamma$ comprehension}
($\Gamma\text{-}\mathsf{CA}$) consists of the restriction of the
induction scheme to formulas in~$\Gamma$.  We also have the scheme of
\emph{$\Delta^i_n$ comprehension} ($\Delta^i_n\text{-}\mathsf{CA}$),
which contains every instance of
\[ 
  (\forall n)[\varphi(n) \leftrightarrow \psi(n)] 
   \to (\exists X)(\forall n)[n \in X \leftrightarrow \phi(n)]
\] 
in which $\varphi$ is $\Sigma^i_n$, $\psi$ is $\Pi^i_n$, and
neither of these formulas mentions~$X$.

The theory $\mathsf{Z}_2$ of (full) \textit{second-order arithmetic} includes
the axioms of a discrete ordered ring, the full comprehension scheme,
and the full induction scheme.

Semantic interpretations of $\Lang_2$-theories are given by
\textit{$\Lang_2$-structures}.  A general $\Lang_2$-structure $\M$
includes a set $\setN^\M$ of ``numbers'', a collection
$\mathcal{S}^\M$ of ``sets'', and interpretations of the symbols of
$\Lang_2$ using $\setN^\M$ and $\mathcal{S}^\M$. An
$\Lang_2$-structure $\M$ is an \textit{$\omega$-model} if $\setN^\M$
is the set $\omega = \{0,1,2,\ldots\}$ of standard natural numbers,
$\mathcal{S}^{\M} \subseteq \mathcal{P}(\omega)$, and all the symbols
of $\Lang_2$ are given their standard interpretations.  We identify an
$\omega$-model with the collection of subsets of $\omega$ that it
contains.  As usual, the notation $\M \models \varphi$ indicates that
the formula $\varphi$ (which may have parameters from $\M$) is true
in~$\M$.

\subsection{Subsystems}

Fragments of $\mathsf{Z}_2$ are called \emph{subsystems of
  second-order arithmetic}.  The program of reverse mathematics seeks
to characterize statements in the language of second-order arithmetic
according to the weakest subsystems that can prove them. These
characterizations are obtained by proving a statement within a certain
subsystem, and then proving in a weak base system that the statement
implies all the axioms of that subsystem.

As is common in reverse mathematics, we will use the subsystem $\RCAo$ for this weak
base system.   $\RCAo$ includes the axioms of a 
discrete ordered semiring, $\Sigma^0_1$ induction, 
and $\Delta^0_1$ comprehension.  Intuitively, this subsystem corresponds to 
computable mathematics, and in fact it is satisfied by the 
$\omega$-model $\mathsf{REC}$ containing only computable sets.  In this 
sense, $\RCAo$ is very weak. Nevertheless, it is able to establish many 
elementary properties of the natural numbers. 

Two countable forms of equivalents of the axiom of choice are already
provable in $\RCAo$.  These are the principle that every set of
natural numbers can be well ordered and the
principle that every sequence of nonempty sets of natural numbers has
a choice function.  We will show that several other
equivalents of the axioms of choice
require stronger subsystems to prove.

These stronger systems are obtained by adding stronger set-existence
axioms to $\RCAo$. The main ones we will be interested in are the
following:
\begin{itemize}
\item $\ACAo$ is the subsystem obtained by adding the comprehension
scheme for arithmetical formulas;
\end{itemize} 
and for each $n \geq 1$,
\begin{itemize}
\item $\Pi^1_n\text{-}\CA$ is the subsystem obtained by adding the
scheme of $\Pi^1_n$ comprehension;
\item $\Delta^1_n\text{-}\CA$ is the subsystem obtained by adding the
scheme of $\Delta^1_n$ comprehension.
\end{itemize} 
There are two important subsystems that do not directly
correspond to restrictions of the second-order comprehension scheme.
The first of these, $\WKLo$, consists of $\RCAo$ along
with a single axiom, known as \textit{weak K\"{o}nig's lemma}, which
states any infinite subtree of $2^{<\setN}$ contains an infinite
path. The second, $\ATRo$, consists of $\RCAo$ along with an axiom scheme
that states that any arithmetically-defined functional $F\colon
2^{\setN} \to 2^{\setN}$ may be iterated along any countable
well-ordering, starting with any set. We will not make use of 
$\ATRo$ in this paper. 

The following theorem summarizes the well-known relations between
the subsystems we have mentioned, in terms of provability.
For subsystems $T$ and $T'$, we write $T < T'$ if every axiom
of $T$ is provable in $T'$ but some axiom of $T'$ is not provable
in~$T$.

\begin{thm}
We have
\[ 
\RCAo < \WKLo < \ACAo < \Delta^1_1\text{-}\CA < \ATRo <\Pi^1_1\text{-}\CA,
\] 
and for each $n \geq 1$,
\[ 
\Pi^1_n\text{-}\CA < \Delta^1_{n+1}\text{-}\CA < \Pi^1_{n+1}\text{-}\CA.
\]
\end{thm}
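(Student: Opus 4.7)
My plan is to prove the theorem in two halves. The forward implications $T \le T'$ — each axiom of the smaller system is provable in the larger — I would establish using syntactic containment wherever possible and appeal to standard proofs otherwise. The strict separations I would handle by exhibiting, for each consecutive pair, an $\omega$-model of $T'$ that fails to satisfy $T$, or equivalently an axiom of $T$ not derivable in $T'$.

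For the provability direction, most steps are essentially syntactic. By definition $\RCAo$ is included in $\WKLo$. Since every arithmetical formula is both $\Sigma^1_1$ and $\Pi^1_1$, $\ACAo$ sits inside $\Delta^1_1\text{-}\CA$; the same observation with $\Pi^1_n$ in place of arithmetical yields $\Pi^1_n\text{-}\CA \subseteq \Delta^1_{n+1}\text{-}\CA$, and $\Delta^1_{n+1}\text{-}\CA \subseteq \Pi^1_{n+1}\text{-}\CA$ is immediate (comprehend a $\Delta^1_{n+1}$ set via its $\Pi^1_{n+1}$ form). The three remaining inclusions — $\WKLo \subseteq \ACAo$, $\Delta^1_1\text{-}\CA \subseteq \ATRo$, and $\ATRo \subseteq \Pi^1_1\text{-}\CA$ — are all standard; for the first, the leftmost path through an infinite binary tree is arithmetically definable from the tree, and the latter two are proved in Chapters V and VIII of Simpson~\cite{Simpson-2009}.

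For the separation direction, I would exhibit a distinguishing $\omega$-model in each case. The collection $\mathsf{REC}$ of computable sets satisfies $\RCAo$ but not $\WKLo$, separated by any infinite computable subtree of $2^{<\omega}$ with no computable branch. An application of the low basis theorem produces an $\omega$-model of $\WKLo$ whose members are all low; this model omits $\emptyset'$ and therefore fails $\ACAo$. The arithmetical sets form an $\omega$-model of $\ACAo$ but miss properly hyperarithmetic sets definable under $\Delta^1_1\text{-}\CA$. The remaining separations at the hyperarithmetic level and above exploit the fact that the stronger system proves the existence of a countable coded $\omega$-model (often a $\beta$-model) of the weaker one: $\ATRo$ proves the existence of an $\omega$-model of $\Delta^1_1\text{-}\CA$, $\Pi^1_1\text{-}\CA$ proves the existence of a $\beta$-model of $\ATRo$, and more generally $\Pi^1_{n+1}\text{-}\CA$ proves the existence of a $\beta$-model of $\Delta^1_{n+1}\text{-}\CA$, while $\Delta^1_{n+1}\text{-}\CA$ proves the existence of an $\omega$-model of $\Pi^1_n\text{-}\CA$; each yields strictness by G\"odelian reflection.

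The main obstacle is not any individual step but the sheer breadth of the theorem: the inclusions above $\ACAo$ and the separations above $\WKLo$ all rest on the theory of $\beta$-models and hyperarithmetic analysis. Since the present paper only uses the statement as an orientation for the reader, the cleanest treatment is to give the easy syntactic inclusions and the three $\omega$-model separations explicitly, and otherwise to refer to Simpson~\cite{Simpson-2009} for the rest.
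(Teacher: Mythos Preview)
Your outline is correct and well-organized, but you should be aware that the paper gives no proof of this theorem at all: it is stated purely as background, introduced with ``The following theorem summarizes the well-known relations between the subsystems we have mentioned,'' and left without any argument or even a reference beyond the general pointer to Simpson~\cite{Simpson-2009} at the start of the section. So there is nothing to compare your approach against; you have supplied a proof sketch where the paper intentionally supplies none. Your concluding remark --- that the cleanest treatment is to handle the easy inclusions and low-level separations explicitly and defer the rest to Simpson --- is exactly the spirit in which the paper treats the result.
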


\section{Zorn's lemma}\label{S:ZL}

Zorn's lemma is one of the best known equivalents of the axiom of
choice, so we begin by studying the strength of countable versions of
this principle. The reverse mathematics results in this section are
relatively elementary, providing a warm-up for the more technical
results of the following sections.

Working in $\RCAo$, we define a \textit{countable poset} to be a set
$P \subseteq \setN$ with a reflexive, antisymmetric, transitive
relation~$\leq_P$. As usual, we may freely convert $\leq_P$ into an
irreflexive, transitive relation~$<_P$.

\begin{defn} 
The following principles are defined in $\RCA$.

\begin{list}{\labelitemi}{\leftmargin=0em}\itemsep2pt
\item[]($\ZL{1})$ If a nonempty countable poset has the property that
every linearly ordered subset is bounded above, then every element of
the poset is below some maximal element.  \medskip

\item[]($\ZL{2}$) If a nonempty countable poset has the property that
every linearly ordered subset is bounded above, then there is a
nonempty set consisting of the maximal elements of the poset.
\medskip

\item[]($\ZL{3}$) If a nonempty countable poset has the property that
every linearly ordered subset is bounded above, then there is a
function that assigns to each element of the poset a maximal element
above it.
\end{list}
\end{defn}

Of these three principles, $\ZL{1}$ is the most natural countable
analogue of Zorn's lemma, but we will see that it is already provable
in $\RCAo$. We will show that $\ZL{2}$ is equivalent to $\ACAo$ over
$\RCAo$, as might be expected. Principle $\ZL{3}$ is of greater
interest; it can be viewed as a uniform version of $\ZL{1}$. We will
show it is also equivalent to $\ACAo$ over $\RCAo$.

\begin{thm}\label{zlone} 
$\ZL{1}$ is provable in $\RCA$.
\end{thm}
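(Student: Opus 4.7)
The plan is to fix a nonempty countable poset $P \subseteq \setN$ satisfying the chain-boundedness hypothesis and an element $a \in P$, and then explicitly build a chain in $P$ through $a$ whose upper bound (supplied by the hypothesis) is forced to be maximal. The construction is by primitive recursion on $\setN$: set $f(0) = a$ and
\[
f(s+1) = \begin{cases} s & \text{if } s \in P \text{ and } s >_P f(s), \\ f(s) & \text{otherwise.} \end{cases}
\]
Because $f(s+1) \geq_P f(s)$ at every stage, induction shows the sequence $(f(s))_{s \in \setN}$ is $\leq_P$-nondecreasing, so its image is linearly ordered by $\leq_P$.

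The step I expect to be the main obstacle is presenting this image as an actual set of $\RCAo$, since the image of a general function need only be $\Sigma^0_1$. The specific form of the recursion is chosen precisely to evade this: an element $n \in P$ lies in the image of $f$ if and only if $n = a$ or $f(n+1) = n$, because those are the only stages at which a genuinely new value can first be produced (in the second case of the recursion the value $f(s)$ is simply copied from an earlier stage). This description is $\Delta^0_1$ in $f$, so $\Delta^0_1$-comprehension yields a set $C \subseteq P$ that coincides with the image. Then $C$ is a chain containing $a$, and the hypothesis provides an upper bound $b \in P$.

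It remains to verify that $b$ is maximal. Suppose some $c \in P$ satisfied $c >_P b$. Since $f(c) \in C$, we would have $f(c) \leq_P b <_P c$, which triggers the first case of the recursion at stage $s = c$ and forces $f(c+1) = c$. Hence $c \in C$ and therefore $c \leq_P b$, contradicting $c >_P b$. Thus $b \geq_P a$ is a maximal element above $a$, which proves $\ZL{1}$.
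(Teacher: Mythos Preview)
Your proof is correct and follows essentially the same approach as the paper: both build a $\leq_P$-nondecreasing sequence by primitive recursion (greedily climbing when the current index dominates the current value), observe that membership in the image is decided by looking at a single stage so the chain is $\Delta^0_1$, and then argue that any strict upper bound of the supplied bound would be forced into the chain. The only cosmetic difference is that the paper first fixes an enumeration $P = \langle p_i : i \in \setN\rangle$ and compares $q_i$ with $p_i$, whereas you work directly with $P \subseteq \setN$ and compare $f(s)$ with $s$; your maximality verification is in fact slightly more streamlined than the paper's case analysis.
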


\begin{proof} 
Working in $\RCAo$, let $\langle P,\leq_P \rangle$ be a
countable poset in which every linearly ordered subset of $P$ is
bounded above.  Write $P = \langle p_i : i \in \setN \rangle$. We will
build a sequence $\langle q_i : i \in \setN\rangle$ by induction.  Let
$q_0$ be an arbitrary element of~$P$.  At stage $i+1$, if $q_{i} <_P
p_i$ then put $q_{i+1} = p_i$, and otherwise put $q_{i+1} = q_i$. This
inductive construction can be carried out in $\RCAo$.  Moreover, a
$\Pi^0_1$ induction in $\RCAo$ shows that if $i < j$ then $q_i \leq_P
q_j$.

Let $L = \{ q_i : i \in \N\}$. To decide if a fixed $p_i \in P$ is in
$L$, it is only necessary to simulate the construction up to stage
$i+1$. Therefore $L$ is a $\Delta^0_1$ set, and so $\RCAo$ proves that
$L$ exists.  Moreover, $L$ is linearly ordered; if two elements of $L$
are incomparable, then two elements of the original sequence $\langle
q_i : i \in \N\rangle$ are incomparable, which is impossible.

By assumption, there is some $i \in \setN$ such that $p_i$ is an upper
bound for~$L$. In particular, it must be that $q_{i} <_P p_i$, which
means by construction that $p_i = q_{i+1} \in L$.  Moreover, because
$p_i$ is an upper bound for $L$, it must be that $q_{i+j} = p_i$ for
all $j \geq 1$.

Now suppose there is some $p_j \in P$ with $p_i <_P p_j$. It cannot be
that $j < i$, because this would imply $p_j \leq_P q_i \leq_P
p_i$. However, if $i < j$ then, at stage $j$, the construction would
select $q_{j+1} = p_j$, contradicting our result that $q_{j+1} = p_i$.
Thus $p_i$ is a maximal element above~$q_0$.
\end{proof}

\begin{thm}
Each of $\ZL{2}$ and $\ZL{3}$ is equivalent to $\ACA$ over $\RCA$.
\end{thm}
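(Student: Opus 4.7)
My plan is to prove the two equivalences in parallel, handling the forward direction (derivability from $\ACAo$) with a single argument and the reverse direction (each principle implies $\ACAo$) with a single poset construction.

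For the forward direction, I would work in $\ACAo$ and let $\langle P,\leq_P\rangle$ be a nonempty countable poset in which every chain is bounded above. The set of maximal elements is
\[
  M = \{p \in P : (\forall q \in P)(p \not<_P q)\},
\]
which is $\Pi^0_1$-definable and so exists by arithmetical comprehension. Theorem~\ref{zlone}, which is already provable in $\RCAo$, guarantees that for every $p \in P$ there is some $q \in M$ with $p \leq_P q$; this makes $M$ nonempty and yields $\ZL{2}$. For $\ZL{3}$ I would then let $F(p)$ be the numerically least $q$ with $p \leq_P q$ and $q \in M$; this function exists because the search is bounded by Theorem~\ref{zlone}, and its graph is $\Delta^0_1$-definable from $M$, so $\ACAo$ suffices to produce it.

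For the reverse direction I will use the well-known characterization that, over $\RCAo$, $\ACAo$ is equivalent to the statement that the range of every injection $f \colon \setN \to \setN$ exists. Given such an $f$, I will construct, in $\RCAo$, a flat two-level poset
\[
  P = \{p_n : n \in \setN\} \cup \{q_{n,s} : n,s \in \setN,\ f(s) = n\},
\]
ordered by $p_n <_P q_{n,s}$ whenever $f(s) = n$, with no other strict inequalities. The injectivity of $f$ makes each $p_n$ have at most one cover and renders antisymmetry and transitivity trivial. The order is $\Delta^0_1$ in $f$, so $P$ exists. Because every chain in $P$ has at most two elements, each is automatically bounded above, so the hypothesis of both Zorn principles is satisfied in $\RCAo$ without any appeal to $\ran(f)$.

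Finally, I note that $p_n$ is maximal in $P$ iff no $q_{n,s}$ sits above it, i.e., iff $n \notin \ran(f)$, while every $q_{n,s}$ is trivially maximal. Applying $\ZL{2}$ produces the set $M$ of maximal elements, and then
\[
  \ran(f) = \{n : p_n \notin M\}
\]
exists by $\Delta^0_1$ comprehension; applying $\ZL{3}$ instead produces a function $F$, and the same set equals $\{n : F(p_n) \neq p_n\}$. In either case $\ACAo$ follows. I do not expect a substantive obstacle at any step: the only delicate point is (i) arranging that the definition of $P$ and the verification of its chain hypothesis avoid any $\Sigma^0_1$ questions about $f$, which is exactly why I split each ``top'' into indexed elements $q_{n,s}$ rather than a single $q_n$ conditioned on $n \in \ran(f)$.
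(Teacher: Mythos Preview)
Your proposal is correct. The forward direction matches the paper's argument almost exactly; the only difference is that the paper proves $\ACAo \to \ZL{2}$ and then $\ZL{2} \to \ZL{3}$ over $\RCAo$ (so it gets the extra fact that $\ZL{2}$ and $\ZL{3}$ are already equivalent over the base system), whereas you derive $\ZL{3}$ directly from $\ACAo$. Either route suffices for the stated theorem.

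For the reversal, your construction is genuinely different and simpler than the paper's. You use a flat two-level poset in which $p_n$ has at most one cover $q_{n,s}$ (present exactly when $f(s)=n$), so maximality of $p_n$ directly encodes $n\notin\ran(f)$. The paper instead builds, for each $i$, an infinite descending chain $\cdots <_P p_{i,2} <_P p_{i,1} <_P p_{i,0}$, and arranges that if $f(s)=i$ then $p_{i,s}$ is pushed above $p_{i,0}$; one then reads off $\ran(f)$ from the value $m(p_{i,0})$. Your poset has the advantage that chains are trivially finite, so verifying the Zorn hypothesis is immediate; the paper's construction keeps the underlying set independent of $f$ (only the order varies) and illustrates that even posets with long chains can be handled. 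Both reversals work for $\ZL{2}$ and $\ZL{3}$ simultaneously, and both rely only on $\Delta^0_1$ comprehension to extract $\ran(f)$ once the maximal-element data is given.
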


\begin{proof} 
For any countable poset $P$ satisfying the hypothesis of
$\ZL{2}$, the set of maximal elements of $P$ is definable by an
arithmetical formula and is nonempty by Theorem~\ref{zlone}.  Thus,
$\ACAo$ implies $\ZL{2}$.

Next, we show that $\ZL{2}$ implies $\ZL{3}$ over $\RCA$.  Let
$\langle P, \leq_P \rangle$ be any countable poset such that every
element of $P$ is below at least one maximal element, and by $\ZL{2}$
let $M$ be the set of maximal elements of~$P$.  Define a function $m
\colon P \to P$ by the rule
\[
m(p) = q \Leftrightarrow (q \in M) \land (p \leq_P q) \land
(\forall r <_{\mathbb{N}} q) [ p \leq_P r \to r \not \in M].
\] 
Then $m$ is a function with domain $P$ such that for each $p$,
$m(p)$ is a maximal element with $p \leq_P m(p)$. Moreover, the
definition of $m$ is $\Delta^0_0$ relative to $M$ and $\leq_P$, so we
can form $m$ in $\RCA$.

Finally, we show that $\ZL{3}$ implies $\ACA$ over $\RCA$.  Fix any
one-to-one function~$f$. We will construct a poset $\langle P, \leq_P
\rangle$ as follows. Let $P = \{ p_{i,s} : i,s \in \setN\}$.  The
order $\leq_P$ on $P$ is defined by cases.  If $i \not = j$ then
$p_{i,s}$ and $p_{j,t}$ are incomparable for all $s,t \in
\setN$. Given $i,s,t \in \setN$, with $s \not = t$, define $p_{i,t}
<_P p_{i,s}$ to hold if either $f(s) = i$, or $f(t) \not = i$ and $t >
s$.  Thus, for a fixed $i$, if there is no $s$ with $f(s) = i$ then we
have a maximal chain
\[
 \cdots <_P p_{i,2} <_P p_{i,1} <_P p_{i,0} ,
\] 
while if $f(s) = i$ then, because $f$ is one-to-one, we have a
maximal chain
\[ 
\cdots <_P p_{i,2} <_P p_{i,1} <_P p_{i,0} <_P p_{i,s} .
\]
In particular, for each $i$, either $p_{i,0}$ is a maximal element
of $P$ or there is an $s$ with $f(s) = i$ and $p_{i,s}$ is a maximal
element of~$P$. (This gives, as a corollary, a direct reversal of
$\ZL2$ to $\ACA$ over $\RCA$.)

Now, working in $\RCA$, assume there is a function $m\colon P \to P$
taking each $p \in P$ to a $\leq_P$-maximal $q$ with $p \leq_P q$. Fix
$i \in \setN$. Either $m(p_{i,0}) = p_{i,0}$, in which case $i$ is in
the range of $f$ if and only if $f(0) = i$, or else $m(p_{i,0}) =
p_{i,s}$ for some $s > 0$, in which case $f(s) = i$.  Thus we have
\[ 
i \in \operatorname{range}(f) \Leftrightarrow (\exists s)[f(s) = i]
\Leftrightarrow (\forall s)[m(p_{i,0}) = p_{i,s} \Rightarrow f(s) =
i].
\] 
Therefore the range of $f$ exists by $\Delta^0_1$ comprehension.  This
completes the reversal.
\end{proof}

\section{Intersection properties}\label{S:FIP}

We next study several principles asserting that every countable
family of sets has a $\subseteq$-maximal subfamily with certain
intersection properties (see Definition~\ref{D:prop}).  We
will show that, although these principles are all equivalent to the
axiom of choice in set theory, they can have vastly different strengths
when formalized in second-order arithmetic. In particular, we find new
examples of principles weaker than $\ACAo$ and incomparable with $\WKLo$.

\begin{defn}\label{D:family}
  We define a \textit{family of sets} to be a sequence $A =
  \langle A_i : i \in \om \rangle$ of sets.  A family $A$ is {\em
    nontrivial} if $A_i \neq \emp$ for some $i \in \om$.  

  Given a family of sets $A$ and a set $X$, we say $A$ \emph{contains}
  $X$, and write $X \in A$, if $X = A_i$ for some $i \in \om$.  A
  family of sets $B$ is a {\em subfamily} of $A$ if every set
in $B$ is in~$A$, that is,  $(\forall
  i)(\exists j)[B_i = A_j]$.
  Two sets 
  $A_i, A_j \in A$ are \textit{distinct} if they differ 
  extensionally as sets.
\end{defn}

Our definition of a subfamily is intentionally weak; see 
Proposition~\ref{P:familydef} below and the remarks preceding it. 

\begin{defn}\label{D:prop} 
Let $A = \langle A_i : i \in \om \rangle$ be a family of sets and
fix $n \geq 2$.  Then $A$ has the
\begin{itemize}
\item {\em $D_n$ intersection property} if the intersection of any $n$ distinct sets in $A$ is empty.

\item {\em $\overline{D}_n$ intersection property} if the intersection of any $n$ distinct sets in $A$ is nonempty.

\item {\em F intersection property} if for every $m \geq 2$, the intersection of any $m$ distinct sets in $A$ is nonempty.

\end{itemize}
\end{defn}

\begin{defn}\label{D:maximal}
Let $A = \langle A_i : i \in \om \rangle$ and 
$B = \langle B_i : i \in \om \rangle$ be families of sets, and let $P$ be
any of the properties in Definition~\ref{D:prop}.  Then
$B$ is a {\em maximal} subfamily of $A$ with the $P$
intersection property if $B$ has the $P$ intersection property, and
for every subfamily $C$ of $A$ that does also, if $B$ is a subfamily
of $C$ then $C$ is a subfamily of $B$.
\end{defn}

It is straightforward to formalize
Definitions~\ref{D:family}--\ref{D:maximal} in $\RCA$.

Given a family $A = \langle A_i : i \in \om \rangle$ and some $J \in
\om^{\om}$, we use the notation $\langle A_{J(i)} : i \in \om \rangle$
for the subfamily $\langle B_i : i \in \om \rangle$ where $B_i =
A_{J(i)}$.  We call this the subfamily \emph{defined} by~$J$.  Given a
finite set $\{j_0,\ldots,j_n\} \subset \om$, we let $\langle
A_{j_0},\ldots,A_{j_n} \rangle$ denote the subfamily $\langle B_i : i
\in \N \rangle$ where $B_i = A_{j_i}$ for $i \leq n$ and $B_i =
A_{j_n}$ for $i > n$.  Note that such a subfamily can still contain $A_i$
for infinitely many $i$, because there could be a $j$ such that
$A_j = A_i$ for infinitely many $i$.  We call a subfamily of $A$
\emph{finite} if it contains only finitely many distinct $A_i$.

We are interested in the following maximality principles.

\begin{defn}\label{D:PIP} 
Let $P$ be any of the properties in Definition~\ref{D:prop}.  
The following principle is defined in $\RCA$.
 
\begin{list}{\labelitemi}{\leftmargin=0em}\itemsep2pt
\item[]($P\IP$) Every nontrivial family of sets has a
maximal subfamily with the $P$ intersection property.
\end{list}
\end{defn}

\noindent For $P = D_n$ and $P = \overline{D}_n$,
the set-theoretic principle corresponding to $P\IP$ is, in
the notation of Rubin and Rubin~\cite{RR-1985}, $\mathsf{M}\, 8 \,
(P)$.  For $P = F$, it is $\mathsf{M} \,14$.  For additional
references concerning the set-theoretic forms, and for proofs of their
equivalences with the axiom of choice, see Rubin and
Rubin~\cite[pp.~54--56,~60]{RR-1985}.

\begin{rem}\label{R:FIP_subfamilies} 
Although we do not make it an explicit part of the definition, all of 
the families $\langle A_i : i \in \om \rangle$ we construct in our results 
will have the property that for each $i$, $A_i$ contains $2i$ and otherwise 
contains only odd numbers.  This will have the advantage that if we are 
given an arbitrary subfamily $B = \langle B_i : i \in \om \rangle$ of some 
such family, we can, for each $i$, uniformly $B$-computably find a $j$ such
that $B_i = A_j$.  If $A$ is computable, each subfamily $B$ will then
be of the form $\langle A_{J(i)} : i \in \om \rangle$ for some $J \in
\om^\om$ with $J \equiv_T B$.
\end{rem}

\subsection{\texorpdfstring{Implications over $\RCA$, and equivalences to $\ACA$}{Implications over RCA0 and equivalents to ACA0}}\label{S:FIP_implications} 
The next sequence of propositions establishes the basic relations that
hold among the principles we have defined.  We begin with the
following upper bound on their strength.

\begin{prop}\label{prop_fip_first} 
For any property $P$ in Definition~\ref{D:prop}, $P\IP$ is provable in $\ACA$.
\end{prop}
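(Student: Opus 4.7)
The plan is a standard greedy construction on indices. Working in $\ACAo$, given a nontrivial family $A = \langle A_i : i \in \om \rangle$, I would define a set $S \subseteq \om$ by stage-by-stage recursion: put $i \in S$ just in case $\{A_j : j < i \text{ and } j \in S\} \cup \{A_i\}$ still has property $P$. The desired maximal subfamily $B$ is then obtained by enumerating $S$ in increasing order, using the padding convention $\langle A_{j_0},\ldots,A_{j_n} \rangle$ of this section in the event that $S$ turns out to be finite.

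The reason $\ACAo$ is strong enough for this is that for each $P$ in Definition~\ref{D:prop}, the predicate ``the finite family indexed by a given finite subset $F$ of $\om$ has property $P$'' is arithmetical, uniformly in $F$ and in $A$: for $D_n$ it is $\Pi^0_1$, while for $\overline{D}_n$ and $F$ it is $\Sigma^0_1$, since in each case only finitely many tuples of extensionally distinct sets need to be inspected for a single numerical witness. Consequently the step function of the recursion defining $S$ is arithmetical. Coding the initial segments of the characteristic function of $S$ as finite binary strings and then applying arithmetical comprehension produces the characteristic function of $S$ as a total function $\om \to 2$; this is the standard formalization of recursive definitions of this sort in $\ACAo$.

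Verification of the two required conclusions is then direct. For property $P$: any tuple of extensionally distinct sets in $B$ occurs among some $A_{s_{i_1}},\ldots,A_{s_{i_m}}$ with $s_{i_1} < \cdots < s_{i_m}$ in $S$, and the decision to place $s_{i_m}$ in $S$ at stage $s_{i_m}$ required precisely that this tuple behave correctly. For maximality: if $C$ were a subfamily of $A$ extending $B$ that also had property $P$, and some $A_k$ in $C$ were extensionally outside of $B$, then $k$ was rejected at stage $k$ because some tuple of extensionally distinct sets drawn from $\{A_j : j < k, j \in S\} \cup \{A_k\}$ witnessed the failure of $P$; but all of those sets already lie in $C$ as well, and failures of each of the three intersection properties persist when one adds further sets, so $C$ would inherit the same violation, contradicting its having property $P$.

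I do not anticipate a serious obstacle. The only subtlety is handling extensional distinctness of sets (rather than distinctness of indices) inside the step check, but this is itself computable from $A$, so it introduces no new complication; everything else is routine arithmetical recursion inside $\ACAo$.
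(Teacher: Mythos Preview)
Your proposal is correct and is essentially the same greedy construction the paper uses: the paper defines a function $p$ enumerating the accepted indices in increasing order (after first disposing of the case of a finite maximal subfamily), while you build the characteristic function of the set $S = \operatorname{range}(p)$ directly and handle the finite case by padding. One small slip: extensional distinctness of $A_i$ and $A_j$ is $\Sigma^0_1$ in $A$, not computable, so your complexity bookkeeping for $\overline{D}_n$ and $F$ is slightly off; but since you only need the step predicate to be arithmetical, this does not affect the argument.
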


\begin{proof} 
Suppose $A = \langle A_i : i \in \N \rangle$ is a
nontrivial family of sets.  If $A$ has a finite maximal subfamily with
the $P$ intersection property, then we are done.  Otherwise, we define
a function $p\colon \N \to \N$ as follows.  Let $p(0)$ be the least
$j$ such that $\langle A_j \rangle$ has the $P$ intersection property,
and given $i \in \N$, let $p(i+1)$ be the least $j > p(i)$ such that
$\langle A_{p(0)},\ldots,A_{p(i)}, A_j \rangle$ has the $P$
intersection property.  Then $p$ exists by arithmetical comprehension,
and by assumption it is total.  It is not difficult to see that $B =
\langle A_{p(i)} : i \in \N \rangle$ is a maximal subfamily of $A$
with the $P$ intersection property.
\end{proof}

\begin{prop}\label{prop_fip_second} 
For each standard $n \geq 2$, the following are provable in $\RCA$:
\begin{enumerate}
\item $F\IP$ implies $\overline{D}_n\IP$;
\item $\overline{D}_{n+1}\IP$ implies $\overline{D}_n\IP$.
\end{enumerate}
\end{prop}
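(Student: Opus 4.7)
For proving both (1) and (2), I would reduce the weaker maximality principle to the stronger one by constructing, within $\RCA$, a modified family $A'$ from the given nontrivial family $A = \langle A_i : i \in \om \rangle$, applying the hypothesized principle to $A'$, and translating the resulting maximal subfamily back to one of $A$ with the desired property.

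For (2), the plan is to adjoin to $A$ a small number of auxiliary sets designed to behave as extensionally distinct ``effective universals'' relative to the $A_i$'s (for instance, sets of the form $\setN \setminus \{v\}$ for fresh distinct elements $v$ avoided by the $A_i$'s, which can be arranged in $\RCA$). One then applies $\overline{D}_{n+1}\IP$ to $A'$ to obtain a maximal $\overline{D}_{n+1}$-subfamily $B'$. By maximality, each auxiliary set belongs to $B'$, because adjoining such a set to $B'$ produces only new $(n+1)$-intersections that reduce to $n$-intersections of the original sets, which are nonempty as $\overline{D}_{n+1}$ implies $\overline{D}_n$ on families with enough distinct sets. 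Letting $B$ be the $\RCA$-definable restriction of $B'$ to $A$, any $n$ distinct sets of $B$ together with one auxiliary set form $n+1$ distinct sets in $B'$ whose intersection equals the original $n$-intersection, showing that $B$ has the $\overline{D}_n$ property.

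The main obstacle is verifying that $B$ is $\overline{D}_n$-maximal in $A$. For $A_j \in A \setminus B$, maximality of $B'$ furnishes $n$ distinct sets $C_1, \dots, C_n \in B'$ with $A_j \cap C_1 \cap \dots \cap C_n = \emptyset$. When at least one $C_k$ is an auxiliary set, its universal behavior allows us to discard it and obtain $m \leq n - 1$ sets from $B$ whose intersection with $A_j$ is empty; extending to an $n$-tuple (using enough distinct sets in $B$) yields a failure of $\overline{D}_n$ in $B \cup \{A_j\}$. The delicate case is when all $C_k$ lie in $B$: this gives only an $(n+1)$-fold empty intersection, which need not witness a failure of $\overline{D}_n$. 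Handling this case is the heart of the proof, and I expect it requires either including enough auxiliary sets to force every obstruction to involve at least one of them, or a substitution argument swapping some $C_k$ for an auxiliary set while preserving emptiness.

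For (1), the approach is parallel: apply $F\IP$ to a family $A'$ obtained by augmenting $A$ so that the $F$-property on $A'$ mirrors the $\overline{D}_n$-property on $A$. The auxiliary structure should arrange that $m$-intersections in $A'$ for $m \neq n$ are automatically nonempty, thereby isolating the content of $F$-maximality to the arity-$n$ behavior of $A$. The principal difficulty here is engineering witness elements simultaneously for arities below $n$ and above $n$ without contaminating the arity-$n$ intersections, since any element placed in an $A'_i$ for $i$ ranging over an index set $T$ automatically enters every sub-arity intersection indexed within $T$; this tension between small- and large-arity witnesses is what makes the construction for (1) subtler than for (2).
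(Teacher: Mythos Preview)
Your proposal has a genuine gap precisely where you flag it: the ``delicate case'' in the maximality argument for (2) is not a technicality but the heart of the matter, and neither of your proposed fixes works.

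Concretely, take $n=2$ and suppose $A_j \notin B'$ because $A_j \cap C_1 \cap C_2 = \emptyset$ with $C_1, C_2 \in B$ both from the original family. It is entirely possible that $A_j \cap C_1 \neq \emptyset$ and $A_j \cap C_2 \neq \emptyset$ (e.g.\ $A_j = \{1,2\}$, $C_1 = \{2,3\}$, $C_2 = \{1,3\}$), so $B \cup \{A_j\}$ still has the $\overline{D}_2$ property and $B$ is not maximal. Adding more near-universal sets $U$ does not help: the obstruction $A_j \cap C_1 \cap C_2 = \emptyset$ lives entirely inside $A$ and is unaffected by how many auxiliary sets you adjoin. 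Substituting some $C_k$ by $U$ fails for the same reason you note $U$ is universal: $A_j \cap U \cap C_2 = A_j \cap C_2 \neq \emptyset$, so the swapped intersection need not stay empty. The underlying problem is that $\overline{D}_{n+1}$-maximality is genuinely weaker information than $\overline{D}_n$-maximality on the same family, and a handful of auxiliary sets cannot bridge that.

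The paper's approach is structurally different: rather than augmenting $A$, it \emph{re-encodes} $A$ as a new family $\widetilde{A}$ in which intersections are manufactured from scratch. An odd witness is placed in $\bigcap_{i \in F} \widetilde{A}_i$ exactly when $F$ has size at least $n+1$ and every $n$-element subset of $\{A_i : i \in F\}$ has nonempty intersection in the original family. Thus the $F$ (respectively $\overline{D}_{n+1}$) intersection property on $\widetilde{A}$ corresponds by design to the $\overline{D}_n$ property on $A$, and maximality transfers directly. The key point your approach misses is that one must control \emph{all} intersections of the auxiliary family, not just those involving new sets; this is why the paper builds $\widetilde{A}$ fresh rather than extending $A$.
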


\begin{proof} 
To prove (1), let $A = \langle A_i : i \in \N \rangle$
be a nontrivial family of sets.  We may assume that $A$ has no finite
maximal subfamily with the $\overline{D}_n$ intersection property.
Define a new family $\widetilde{A} = \langle \widetilde{A}_i : i \in
\N \rangle$ by recursion as follows.  For all $i \neq j$, let $2i \in
\widetilde{A}_i$ and $2j \notin \widetilde{A}_i$.  Now suppose $s$ is
such that the $\widetilde{A}_i$ have been defined precisely on the odd
numbers less than $2s+1$.  Consider all finite sets $F \subseteq
\{0,\ldots,s\}$ such that $|F| \geq n+1$ and for every $F' \subseteq
F$ of size $n$ there is an $x \leq s$ belonging to $\bigcap_{i \in F'}
A_i$.  If no such $F$ exists, enumerate $2s+1$ into the complement of
$\widetilde{A}_i$ for all~$i$.  Otherwise, list these sets as
$F_0,\ldots,F_k$.  For each $j \leq k$, enumerate $ 2(s+j)+1$ into
$\widetilde{A}_i$ if $i \in F_j$, and into the complement of
$\widetilde{A}_i$ if $i \notin F_j$.

The family $\widetilde{A}$ exists by $\Delta^0_1$ comprehension, and 
is nontrivial by construction.  Let $\widetilde{B} = \langle
\widetilde{B}_i : i \in \N \rangle$ be a maximal subfamily of
$\widetilde{A}$ with the $F$ intersection property.  Now
each $\widetilde{B}_i$ contains exactly one even number, and if $2j \in
\widetilde{B}_i$ then $\widetilde{B}_i = \widetilde{A}_j$.  We define
a family $B = \langle B_i : i \in \N \rangle$, where $B_i = A_j$ for
the unique $j$ such that $2j \in \widetilde{B}_i$.  We claim that this
is a maximal subfamily of $A$ with the $\overline{D}_n$ intersection
property.

It is not difficult to see that~$B$ has the~$\overline{D}_n$ intersection
property.  Indeed, let $A_{i_0},\ldots,A_{j_{n-1}}$ be any~$n$ distinct
members of~$B$, and assume the indices have been chosen so that
$\widetilde{A}_{i_j} \in \widetilde{B}$ for all $j < n$.  Then
$\bigcap_{j < n} \widetilde{A}_{i_j} \neq \emp$, so by construction we
can find a finite set~$F$ of size $\geq n+1$ such that $i_j \in F$ for all~$j$
and $\bigcap_{i \in F'} A_i \neq \emp$ for every $n$-element
$F' \subset F$.  In particular, $\bigcap_{j < n} A_{i_j} \neq \emp$.

To show that~$B$ is maximal, we first argue that it is not a finite subfamily.
Assume otherwise.  Say the distinct members of~$B$ are 
$A_{i_0},\ldots,A_{i_m}$,
where the indices have been chosen so
that $\widetilde{A}_{i_j} \in \widetilde{B}$ for all $j \leq m$.  Now we
can find a finite set~$F$ of size $\geq n+1$ such that $i_j \in F$ for all~$j$
and $\bigcap_{i \in F'} A_i \neq \emp$ for every $n$-element $F' \subset F$.
If $m = 0$, this is because of our assumption on~$A$, and if $m > 0$, this is
because $\bigcap_{j \leq m} \widetilde{A}_{i_j} \neq \emp$.  Our assumption
on~$A$ also implies that the~$A_i$ for $i \in F$ cannot
form a maximal subfamily with the~$\overline{D}_n$ intersection property.
We can therefore fix a~$k$ so that $A_k \neq A_i$ for all $i \in F$ and
$\bigcap_{i \in F'} A_i \neq \emp$ for every $n$-element $F' \subset F \cup \{k\}$.
Then by construction,
$\widetilde{A}_k \cap \bigcap_{j \leq m} \widetilde{A}_{i_j} \neq \emp$.
Of course, the same is true if we replace any~$i_j$ in the intersection by any~$i$
such that $A_i = A_{i_j}$.  And since for every~$i$
such that $\widetilde{A}_i \in B$ we have $A_i = A_{i_j}$
for some $j \leq m$, it follows that the intersection of any finite
number of members of~$\widetilde{B}$ with~$\widetilde{A}_k$ is
nonempty.  By maximality of~$\widetilde{B}$, $\widetilde{A}_k \in \widetilde{B}$
and hence $A_k \in B$.  This is the desired contradiction.

Now suppose $A_k \notin B$ for some~$k$, so that necessarily
$\widetilde{A}_k \notin \widetilde{B}$.   Since~$\widetilde{B}$ is maximal,
and since~$B$ is not finite, we can consequently find a finite
set~$F$ of size $\geq n+1$ such that
\begin{itemize}
\item for all $i \neq j$ in~$F$, $A_i \neq A_j$;
\item for all $i \in F$, $\widetilde{A}_i \in \widetilde{B}$;
\item $\widetilde{A}_k \cap \bigcap_{i \in F} \widetilde{A}_i = \emp$.
\end{itemize}
By construction, this means there is an $n$-element subset~$F'$ of
$F \cup \{k\}$ with $\bigcap_{i \in F'} A_i = \emp$, and clearly~$k$ must
belong to~$F'$. Since $A_i \in B$ for all $i \in F$, and in particular for all
$i \in F' - \{k\}$, we conclude that~$B$ is maximal with respect to 
property $\overline{D}_n$.  This completes the
proof that $F\IP$ implies $\overline{D}_n\IP$.

A similar argument can be used to show (2).  We have only to modify
the construction of $\widetilde{A}$ by looking, instead of at
finite sets $F \subseteq\{0,\ldots,s\}$ with $|F| \geq n+1$, only at
those with $|F| = n+1$.  The details are left to the reader.
\end{proof}

An apparent weakness of our definition of subfamily is that we cannot,
in general, effectively decide which members of a family are contained in a given
subfamily.  The next proposition demonstrates that if we strengthen
the definition of subfamily to make this problem decidable, all the
intersection principles we study become equivalent to arithmetical
comprehension.

\begin{prop}\label{P:familydef}
Let $P$ be any of the properties in Definition~\ref{D:prop}.  
The following are equivalent over $\RCA$:
\begin{enumerate}
\item $\ACA$;

\item every nontrivial family of sets $\langle A_i : i \in \N \rangle$
has a maximal subfamily $B$ with the $P$ intersection property, and
the set $I = \{i \in \N: A_i \in B\}$ exists.

\end{enumerate}
\end{prop}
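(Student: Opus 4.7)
The forward direction $(1) \Rightarrow (2)$ is immediate: assuming $\ACA$, I would invoke Proposition~\ref{prop_fip_first} to obtain the maximal subfamily $B$ with the $P$~intersection property, and then observe that the set $I$ is defined by the $\Sigma^0_2$ formula $i \in I \Leftrightarrow \exists j\, \forall n\, (n \in A_i \leftrightarrow n \in B_j)$, so it exists by arithmetical comprehension.

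For the reverse direction $(2) \Rightarrow (1)$, my plan is to deduce $\ACA$ by constructing, for any given one-to-one function $f\colon \N \to \N$, a nontrivial family $A$ (computable from $f$) whose $P$-intersection structure encodes the graph of $f$, so that the index set $I$ of any maximal subfamily $B$ with property $P$ lets us form $\operatorname{range}(f)$ by $\Delta^0_1$ comprehension from $I$ and $f$. The construction uses the marker device of Remark~\ref{R:FIP_subfamilies}: each set in $A$ carries a unique even ``marker'' together with odd ``connector'' elements whose membership in the set depends on $f$. For example, when $P = D_2$, I would index the family by pairs $(n, k) \in \N \times \N$ and define $A_{n,k} = \{a_{n,k}\} \cup \{1 : n = f(k)\}$, where the $a_{n,k}$ are distinct even markers; I would then show that every maximal pairwise-disjoint subfamily has the form $\{A_{n,k} : n \neq f(k)\} \cup \{A_{f(k_0),k_0}\}$ for exactly one index $k_0$ (each $A_{n,k}$ with $n \neq f(k)$ is a singleton whose marker is in no other set and so must appear in $B$; any two sets $A_{f(k),k}$ share~$1$, so only one can; and maximality forces at least one). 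Using $I$ together with $f$, one identifies $k_0$ by a $\Delta^0_1$ search and then recovers $f$, and hence $\operatorname{range}(f)$, by $\Delta^0_1$ comprehension.

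The hard part will be adapting this strategy to the positive intersection properties $\overline{D}_n$ and $F$, where ``degenerate'' maximal subfamilies---formed around a single singleton set together with a small number of compatible companions---can a priori arise and give an uninformative $I$. I would address this by augmenting $A$ with auxiliary ``tag'' sets that are compatible only with the intended canonical maximal subfamily, so that the principle must return a $B$ rich enough to encode the graph of $f$. Each property in Definition~\ref{D:prop} requires a suitably tailored variant of the family, but the overall strategy---encoding the graph of $f$ via marked, conditionally-connected sets, then extracting $\operatorname{range}(f)$ from~$I$ by $\Delta^0_1$ comprehension---remains uniform.
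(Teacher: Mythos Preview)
Your forward direction is fine. The reverse direction for $P = D_2$, however, has a genuine gap: your family encodes the wrong object. With $A_{n,k} = \{a_{n,k}\} \cup \{1 : n = f(k)\}$, the complement of the resulting index set $I$ is $\{(n,k) : n = f(k) \text{ and } k \neq k_0\}$---that is, the \emph{graph} of $f$ minus one point. But the graph of $f$ is already $\Delta^0_1$ in $f$, so $I$ carries no new information. Your sentence ``one identifies $k_0$ \ldots\ and then recovers $f$, and hence $\operatorname{range}(f)$, by $\Delta^0_1$ comprehension'' is exactly where the argument breaks: we already \emph{have} $f$, and passing from $f$ to $\operatorname{range}(f)$ is a projection that introduces an existential number quantifier. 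Concretely, from $I$, $f$, and $k_0$ you can write $m \in \operatorname{range}(f) \Leftrightarrow m = f(k_0) \lor (\exists k)[(m,k)\notin I]$, but there is no matching $\Pi^0_1$ description available, so $\Delta^0_1$ comprehension does not apply.

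The paper avoids this by indexing the family by single numbers rather than pairs: $A_i = \{2i\} \cup \{2x+1 : (\exists y \le x)[f(y) = i]\}$, so that $A_i$ is a singleton exactly when $i \notin \operatorname{range}(f)$ and otherwise contains cofinitely many odd numbers. Thus any two non-singletons intersect, and this one family serves for every property $P$. For $P = D_n$, at most $n-1$ non-singletons can lie in $B$, so $\operatorname{range}(f)$ agrees with $\N\setminus I$ up to a finite exceptional set; that set is $f[K]$ where $K = \{k : f(k) \in I\}$ is $\Delta^0_1$ in $f$ and $I$ and has at most $n-1$ elements, hence is bounded, so $\operatorname{range}(f) = (\N\setminus I)\cup f[K]$ exists. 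Your instinct that the positive properties $\overline{D}_n$ and $F$ raise a ``degenerate subfamily'' issue is actually well taken---a single singleton $A_j$ is a maximal $\overline{D}_2$-subfamily of the paper's family too, and the paper's one-line treatment of those cases does not visibly rule this out---but any repair should begin from a family whose index set reflects $\operatorname{range}(f)$ directly, not from tag sets grafted onto a graph encoding.
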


\begin{proof} 
The argument that~(1) implies~(2) is a refinement of the
proof of Proposition~\ref{prop_fip_first}.  In the case where $A$ does
not have a finite maximal subfamily with the $P$ intersection
property, we can take for $I$ the range of the function $p$ defined in
that proof.

To show that (2) implies (1), we work in $\RCAo$ and let $f\colon \N
\to \N$ be a one-to-one function.  For each $i$, let
\[
A_i = \{2i\} \cup \{2x+1 : (\exists y \leq x)[f(y) =i]\}.
\]
noting that $i \in \operatorname{range}(f)$ if and only if $A_i$ is
not a singleton, in which case $A_i$ contains cofinitely many odd
numbers.  Consequently, for every finite $F \subset \N$ of size $\geq 2$,
$\bigcap_{i \in F} A_i \neq \emp$ if and only if each $i \in F$ 
is in the range of~$f$.

Apply (2) with $P = D_n$ to the family $A = \langle A_i : i \in \N
\rangle$ to find the corresponding subfamily $B$ and set~$I$.  Because
$B$ is a maximal subfamily with the $D_n$ intersection property,
there are at most $n-1$ many $j$ such that $j \in \operatorname{range}(f)$
and $A_j \in B$.  And for each~$i$ not equal to any such~$j$, we have
\[
i \in \operatorname{range}(f) \Leftrightarrow A_i \notin B
\Leftrightarrow i \notin I.
\]
Thus the range of $f$ exists.  We reach the same conclusion if we
instead apply (2) with $P = F$ or $P = \overline{D}_n$ to~$A$.  In this case,~$B_i$
is not a singleton for all $i \in \setN$, and we have
\[ 
i \in \operatorname{range}(f) \Leftrightarrow A_i \in B
\Leftrightarrow i \in I.\qedhere
\]
\end{proof}

We close this subsection by showing that the above reversal to $\ACA$
goes through for $P = D_n$ even with our weak definition of subfamily.

\begin{prop}\label{P:DnIP_to_ACA} 
For each standard $n \geq 2$, $D_n\IP$ is equivalent 
to $\ACA$ over $\RCA$.
\end{prop}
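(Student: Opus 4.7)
The direction $\ACA \Rightarrow D_n\IP$ is given by Proposition~\ref{prop_fip_first}, so I need only show $D_n\IP \Rightarrow \ACA$ over $\RCA$. Given a one-to-one $f\colon \N \to \N$, my plan is to construct a single nontrivial family $A$ whose maximal $D_n$ subfamilies $B$ encode $\operatorname{range}(f)$ effectively enough that $\Delta^0_1$-comprehension with $B$ and $f$ as parameters produces the range---which is equivalent to $\ACA$ over $\RCA$.

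The family will be organized in $n$-element \emph{bundles}, one per $i \in \N$. For each $i$ and each $0 \leq k < n$, set
\[
A_{\langle i, k\rangle} = \{2\langle i, k\rangle\} \cup \{2\langle i, x\rangle + 1 : (\exists y \leq x)[f(y) = i]\},
\]
re-indexed as a single sequence. The even number $2\langle i, k\rangle$ is a tag unique to $A_{\langle i, k\rangle}$, and all odd elements of $A_{\langle i, k\rangle}$ are keyed to $i$ via the pairing. The key combinatorial claim, verified by cases on whether two sets share a first coordinate and whether that coordinate lies in $\operatorname{range}(f)$, is that $n$ distinct members of $A$ have nonempty intersection if and only if they are exactly the $n$ members of some bundle whose index lies in $\operatorname{range}(f)$. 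The definition is $\Delta^0_0$ in $f$, so $A$ exists in $\RCA$.

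With this in hand, I would apply $D_n\IP$ to obtain a maximal $D_n$ subfamily $B$ and analyze it bundle by bundle. For $i \notin \operatorname{range}(f)$, the bundle consists of $n$ pairwise disjoint singletons, all of which must lie in $B$ by maximality. For $i \in \operatorname{range}(f)$ at most $n-1$ members can lie in $B$; if fewer than $n-1$ did, adding a missing member would leave at most $n-1$ of this bundle in $B$ and so preserve $D_n$, contradicting maximality. Because each tag identifies its set uniquely, $A_{\langle i, k\rangle} \in B \Leftrightarrow (\exists j)[2\langle i, k\rangle \in B_j]$, and therefore
\[
i \notin \operatorname{range}(f) \;\Leftrightarrow\; (\forall k < n)(\exists j)[2\langle i, k\rangle \in B_j]
\]
is $\Sigma^0_1$ in $B$. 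This yields a $\Pi^0_1$-in-$B$ description of $\operatorname{range}(f)$; combined with the trivial $\Sigma^0_1$-in-$f$ description, $\Delta^0_1$-comprehension produces the range as a set.

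The main obstacle is arranging the family so that the $n$-wise intersection structure detects $\operatorname{range}(f)$ exactly and nothing else, with no spurious intersections across different bundles. The design trick is to key the odd content to the bundle index $i$ via the pairing $\langle i, x\rangle$, ensuring that distinct bundles are disjoint, and to enumerate odd elements only once $f$ has revealed the relevant index, keeping the family $\Delta^0_0$-definable in $f$. Once these combinatorics are in place the maximality analysis is forced, and the conclusion is a single comprehension step.
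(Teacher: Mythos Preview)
Your proof is correct and follows the same overall template as the paper---build a family whose $n$-wise intersection structure encodes $\operatorname{range}(f)$, apply $D_n\IP$, and use the even tags to read off a $\Pi^0_1$ description of the range from $B$---but the specific family you construct is genuinely different.

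The paper uses one set $A_i$ per index $i$, with all the $A_i$ for $i \in \operatorname{range}(f)$ sharing a common cofinite tail of odd numbers. Consequently any $n$ of those sets intersect, so a maximal $D_n$ subfamily $B$ can contain at most $n-1$ of them; this produces a finite exceptional set of indices (of standard size $\leq n-1$) that must be identified and hard-coded before the $\Delta^0_1$ comprehension step goes through. Your bundle construction keys the odd content of $A_{\langle i,k\rangle}$ to $i$ via the pairing, so distinct bundles are fully disjoint and the only nonempty $n$-wise intersections are complete bundles with index in the range. Maximality then forces \emph{every} bundle to contribute either all $n$ members (index not in range) or exactly $n-1$ members (index in range), with no exceptional indices at all. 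This buys you a uniform $\Pi^0_1$ description of the range directly, at the cost of a slightly more elaborate family. Both arguments are valid; yours trades a little extra setup for a cleaner finish.
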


\begin{proof} 
Fix a one-to-one function $f \colon \N \to \N$, and let $A$ be the family defined
in the preceding proposition.  Let $B = \langle B_i : i \in \N \rangle$ be the
family obtained from applying $D_n\IP$ to~$A$.  As above, there can be at most
$n-1$ many $j$ such that $j \in \operatorname{range}(f)$ and $A_j \in B$.  For~$i$
not equal to any such~$j$, we have
\[ 
i \in \operatorname{range}(f) \Leftrightarrow A_i \notin B
\Leftrightarrow (\forall k)[2i \notin B_k].
\]
This gives us a $\Pi^0_1$ definition of the range of~$f$.  Since
the range of $f$ is also definable by a $\Sigma^0_1$ formula, it
follows by $\Delta^0_1$ comprehension that the range of $f$ exists.
\end{proof}

We do not know whether the implications from $F\IP$ to
$\overline{D}_n\IP$ or from $\overline{D}_{n+1}\IP$ to
$\overline{D}_n\IP$ are strict.  However, all of our results in the
sequel hold equally well for $F\IP$ as they do for
$\overline{D}_2\IP$.  Thus, we shall formulate all implications over
$\RCA$ involving these principles as being to $F\IP$ and from
$\overline{D}_2\IP$.

\subsection{Non-implications and conservation results} In contrast
to Proposition~\ref{P:DnIP_to_ACA}, $F\IP$ and the principles
$\overline{D}_n\IP$ for $n \geq 2$ are all strictly weaker
than~$\ACA$.  This section is dedicated to a proof of this
nonimplication, as well as to results showing that $F\IP$ does not imply
$\WKLo$ and $D_2\IP$ is not provable in $\WKLo$. 
These results will be further sharpened by Proposition~\ref{P:Gen_to_FIP}
below. 

\begin{prop}\label{P:FIP_no_ACA} 
There is an $\omega$-model of $\RCA + F\IP$ consisting entirely 
of low sets.  Therefore $F\IP$ does not imply $\ACA$ over $\RCA$.
\end{prop}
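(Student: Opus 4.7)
The plan is to construct $\mathcal{M}$ by a standard iterative low-closure argument driven by the following lemma: for every set $C$ and every nontrivial family $A \leq_T C$, there is a maximal subfamily $B$ of $A$ with the F intersection property such that $(C \oplus B)' \leq_T C'$. Granted the lemma, one enumerates the countably many candidate families arising in the model (using bookkeeping over all indices relative to finite joins of sets already included), and at each stage adjoins a $B$ produced by the lemma, closing under Turing reducibility at the end. Since the "low over low" relation gives low and lowness is preserved under finite joins, the resulting $\omega$-model consists entirely of low sets and satisfies $\RCA$ together with $F\IP$.

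To prove the lemma, I would code a subfamily by the characteristic function of the index set $I = \{i : A_i \in B\}$ and construct $I$ via a forcing argument of low-basis-theorem flavor, carried out relative to $C$. A condition is a pair $(\sigma, g)$, where $\sigma \in 2^{<\om}$ approximates the characteristic function of $I$, and $g$ is a finite partial function assigning to each $F \subseteq \{i < |\sigma| : \sigma(i) = 1\}$ with $|F| \geq 2$ an element of $\bigcap_{i \in F} A_i$. The set of infinite extensions of $(\sigma, g)$ to a compatible full pair $(I, \hat{g})$ forms a $\Pi^{0,C}_1$ class. Maximality is enforced by processing each $k \in \om$ in turn: if the class consistent with $\sigma(k) = 1$ is nonempty, extend in that direction; otherwise set $\sigma(k) = 0$ and record a finite witness $F \subseteq \{i < k : \sigma(i) = 1\}$ with $A_k \cap \bigcap_{i \in F} A_i = \emp$. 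Whether such an infinite extension exists is a $\Sigma^{0,C}_1$ question, so all choices can be made $C'$-computably, and a low basis theorem applied to the tree of valid conditions yields a low-over-$C$ path from which $B$ is recovered via Remark~\ref{R:FIP_subfamilies}.

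The principal obstacle is that, taken at face value, both the F intersection property and the maximality of $B$ are $\Pi^0_2$ in $A$, which would only yield a low$_2$ bound. The critical move is that introducing the selector $g$ converts the F intersection property into a $\Pi^{0,C}_1$ constraint amenable to the low basis theorem, while maximality is reduced to a sequence of $\Sigma^{0,C}_1$ decisions about nonemptiness of subclasses. A secondary verification must ensure that the resulting construction respects the extensional definition of subfamily from Definition~\ref{D:family}: repetitions among the $A_i$ must be handled consistently, and the index set $I$ must genuinely witness that every omitted $A_k$ is incompatible with some finite subfamily already contained in $B$, not merely with the initial segment considered at stage $k$.
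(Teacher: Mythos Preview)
Your approach has a genuine gap. By coding the subfamily via the characteristic function of the index set $I = \{i : A_i \in B\}$, you are in effect proving the stronger statement that both $B$ and $I$ exist. Proposition~\ref{P:familydef} shows that this stronger statement is equivalent to $\ACA$, so it cannot be witnessed by a low set in general. Concretely, take the family defined there from a one-to-one $f$ enumerating $\emptyset'$: each $A_i$ contains $2i$ and is otherwise a singleton unless $i \in \operatorname{range}(f)$, in which case it is cofinite on the odds. Any maximal subfamily with the $F$ intersection property that contains more than one set has index set exactly $\operatorname{range}(f) = \emptyset'$, which is not low. Your greedy rule ``set $\sigma(k)=1$ whenever the class remains nonempty'' will, once any $k_0 \in \operatorname{range}(f)$ has been put in, set $\sigma(k)=1$ precisely when $k \in \operatorname{range}(f)$; so the $I$ you build is $\emptyset'$.

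Invoking the low basis theorem does not rescue this. The maximality decisions already pin down $I$ bit by bit, so there is no residual freedom for jump control to exploit. If instead you interleave jump restrictions before deciding $\sigma(k)$, the restricted class with $\sigma(k)=1$ can become empty for reasons having nothing to do with intersections of the $A_i$; you will then set $\sigma(k)=0$ without being able to ``record a finite witness $F$ with $A_k \cap \bigcap_{i\in F}A_i = \emptyset$,'' and the maximality verification breaks. (A minor side point: nonemptiness of a $\Pi^{0,C}_1$ class is $\Pi^{0,C}_1$, not $\Sigma^{0,C}_1$, though it is still $C'$-decidable.)

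The paper's proof sidesteps this by forcing with finite \emph{listings} $\sigma \in \omega^{<\omega}$ of indices together with a bound on a common element, so that the generic is a function $J \in \omega^{\leq \omega}$ enumerating $B = \langle A_{J(i)} : i \in \omega\rangle$ rather than the characteristic function of $\{i : A_i \in B\}$. Many different $J$'s enumerate the same subfamily, and this slack is exactly what lets one interleave jump-deciding steps with the ``try to add index $e$'' steps while keeping $J$ low. The resulting $B$ is low even though its index set (which for the family above equals $\operatorname{range}(J) = \emptyset'$) is not.
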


\begin{proof} 
Given a computable nontrivial family $A = \langle A_i :
i \in \om \rangle$ of sets, let $\mathbb{F}_A$ be the notion of
forcing whose conditions are strings $\sigma \in \om^{<\om}$ such that
some $x \leq \sigma(|\sigma|-1)$ belongs to $A_{\sigma(i)}$ for all $i
< |\sigma| - 1$, and $\sigma' \leq \sigma$ if $\sigma' \res |\sigma'|
- 1 \succeq \sigma \res |\sigma| - 1$.  Now fix any $A_i \neq \emp$,
say with $x \in A_i$, and let $\sigma_0 = i x$.  Given $\sigma_{2e}$
for some $e \in \om$, ask if there is a condition $\sigma \leq
\sigma_{2e}$ such that $\Phi^{\sigma \res |\sigma| - 1}_e(e)
\downarrow$.  If so, let $\sigma_{2e+1}$ be the least such $\sigma$ of
length greater than $|\sigma_{2e}|$, and if not, let $\sigma_{2e+1} =
\sigma_{2e}$.  Given $\sigma_{2e+1}$, ask if there is a condition
$\sigma \leq \sigma_{2e+1}$ such that $\sigma(i) = e$ for some $i <
|\sigma| - 1$.  If so, let $\sigma_{2e+2}$ be the least such $\sigma$,
and if not, let $\sigma_{2e+2} = \sigma_{2e+1}$.  A standard argument
establishes that $J = \bigcup_{e \in \om} \left( \sigma_e \res |\sigma_e| -
1\right )$ is low, and hence so is $B = \langle A_{J(i)} : i \in \om
\rangle$.  It is clear that $B$ is a maximal subfamily of $A$ with the
$F$ intersection property.  Iterating and dovetailing this argument
produces the desired $\om$-model. 

The second part of the proposition follows from the fact that every
\mbox{$\om$-model} of $\ACA$ must contain a set of degree $\0'$, which is not low.
\end{proof}

We will establish the result that $F\IP$ does not even
imply~$\WKL$ by showing $F\IP$ is conservative for the following class
of sentences.

\begin{defn}[{Hirschfeldt, Shore and Slaman~\cite[p.~5819]{HS-2007}}]\label{D:RP} A sentence in $\Lang_2$ is \emph{restricted $\Pi^1_2$} if it is of the
form
\[
  (\forall X)[\varphi(X) \to (\exists Y)\psi(X,Y)],
\]
where $\varphi$ is arithmetical and $\psi$ is $\Sigma^0_3$.
\end{defn}

Many familiar principles are equivalent to restricted $\Pi^1_2$
sentences over $\RCAo$, including the defining
axiom of $\WKLo$. We discuss several others in the next
subsection.

The study of restricted $\Pi^1_2$ conservativity was initiated by
Hirschfeldt and Shore~\cite[Corollary 2.21]{HS-2007} in the context of
the principle $\COH$.  Subsequently, it was extended by Hirschfeldt,
Shore, and Slaman~\cite[Corollary 3.15 and the penultimate paragraph
of Section 4]{HSS-2009} to the principles $\AMT$ and
$\Pi^0_1\mathsf{G}$ (see Definitions~\ref{D:AMT} and~\ref{D:Pi01G}
below).  The conservation proofs for the latter two principles differ
from the original only in the choice of forcing notion (Mathias
forcing for $\COH$, Cohen forcing for $\AMT$ and $\Pi^0_1\mathsf{G}$).
A similar proof goes through, \emph{mutatis mutandis}, for the
notion $\mathbb{F}_A$ from the proof of
Proposition~\ref{P:FIP_no_ACA}, giving the following conservation
result.  We refer the reader to either of the above-cited papers for
details.

\begin{thm}\label{P:FIP_conservative} The principle $F\IP$ is
conservative over $\RCA$ for restricted $\Pi^1_2$ sentences.
Therefore $F\IP$ does not imply $\WKL$ over $\RCA$.
\end{thm}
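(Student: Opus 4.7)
The plan is to adapt, almost verbatim, the conservation-via-forcing template of Hirschfeldt--Shore--Slaman, substituting the forcing notion $\mathbb{F}_A$ constructed in the proof of Proposition~\ref{P:FIP_no_ACA} for Mathias (resp.\ Cohen) forcing. Suppose, for contradiction, that some restricted $\Pi^1_2$ sentence $\Phi = (\forall X)[\varphi(X) \to (\exists Y)\psi(X,Y)]$ is provable from $\RCA + F\IP$ but not from $\RCA$. Fix a countable $\omega$-model $\M_0$ of $\RCA$ together with an $X_0 \in \M_0$ such that $\M_0 \models \varphi(X_0)$ while no $Y \in \M_0$ satisfies $\psi(X_0, Y)$. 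My aim is to enlarge $\M_0$ to a countable $\omega$-model $\M_\omega$ of $\RCA + F\IP$ in which $\psi(X_0, Y)$ still fails for every $Y$; this contradicts the assumed provability of $\Phi$.

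I build a chain $\M_0 \subseteq \M_1 \subseteq \M_2 \subseteq \cdots$ of countable $\omega$-models of $\RCA$ by dovetailing. At each stage, I list a nontrivial family $A \in \M_i$ that has not yet been handled and run the construction of Proposition~\ref{P:FIP_no_ACA} relative to $\M_i$: choose a filter for $\mathbb{F}_A$ meeting every density requirement sufficiently generic over $\M_i$, obtaining a $J$ whose associated subfamily $\langle A_{J(n)} : n \in \om \rangle$ is maximal with the $F$ intersection property. Set $\M_{i+1}$ to be the Turing-ideal closure of $\M_i \cup \{J\}$, which is again an $\omega$-model of $\RCA$. A standard bookkeeping argument guarantees that, in $\M_\omega = \bigcup_i \M_i$, every nontrivial family has been given a maximal $F$-intersection-property subfamily, so $\M_\omega \models F\IP$.

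The crux, and what I expect to be the main obstacle, is a preservation lemma asserting that these $\mathbb{F}_A$-generic extensions never add witnesses to $\Sigma^0_3$ formulas about $X_0$. Concretely: for any $Z \in \M_i$, any nontrivial family $A \in \M_i$, any index $e$, and any sufficiently $\mathbb{F}_A$-generic $J$ over $Z$, if $\Phi_e^{Z \oplus J}$ is total and $\psi(X_0, \Phi_e^{Z \oplus J})$ holds, then already some $Y \leq_T Z$ satisfies $\psi(X_0, Y)$. The proof should mirror the analogous statements in Hirschfeldt--Shore and Hirschfeldt--Shore--Slaman: one analyzes the forcing relation for $\mathbb{F}_A$, observes that forcing $\psi(X_0, \Phi_e^{Z \oplus G})$ is itself a $\Sigma^0_3$-in-$Z$ predicate of conditions, and shows that the density requirements needed to refute a putative $\Sigma^0_3$ witness can be met by conditions already in $\M_i$ unless such a witness exists there. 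Iterating the lemma through the construction preserves the failure of $(\exists Y)\psi(X_0, Y)$ at every stage, and hence in $\M_\omega$. The second sentence of the theorem then follows at once, since the defining axiom of $\WKL$ (``every infinite subtree of $2^{<\setN}$ has an infinite path'') is a restricted $\Pi^1_2$ sentence not provable in $\RCA$.
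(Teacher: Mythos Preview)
Your approach matches the paper's: both defer to the Hirschfeldt--Shore--Slaman conservation template, swapping in the forcing notion $\mathbb{F}_A$ from Proposition~\ref{P:FIP_no_ACA}. The paper in fact gives no further details beyond this reference, so your sketch is if anything more explicit.

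There is, however, a genuine gap in your setup. You begin by fixing a countable \emph{$\omega$-model} $\M_0$ of $\RCA$ in which $\Phi$ fails. But if $\RCA \nvdash \Phi$, the countermodel guaranteed by completeness need not be an $\omega$-model; its first-order part may be nonstandard. Working only over $\omega$-models yields the weaker conclusion that every restricted $\Pi^1_2$ consequence of $\RCA + F\IP$ holds in all $\omega$-models of $\RCA$, which does not give provability in $\RCA$. The Hirschfeldt--Shore and Hirschfeldt--Shore--Slaman arguments to which the paper appeals are carried out over arbitrary countable models of $\RCA$, and this is essential for the full conservation claim. The fix is straightforward in outline---start with an arbitrary countable $\M_0 \models \RCA + \neg\Phi$ and run the same iterated-forcing construction---but you must then check that the definition of $\mathbb{F}_A$, the density arguments, and the $\Sigma^0_3$-preservation lemma all formalize inside such a model. (They do, since the relevant conditions and density sets are arithmetically definable, but this is exactly the content of the cited papers.)

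Your derivation of the second sentence is unaffected: the non-implication of $\WKL$ already follows from the $\omega$-model version, since one can start from $\mathsf{REC}$.
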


The preceding results lead to the question of whether $F\IP$, or any
one of the principles $\overline{D}_n\IP$, is provable in $\RCA$, or
at least in $\WKL$.  We show in the following theorem
that $F\IP$ fails in any $\om$-model of $\WKL$ consisting entirely of sets
of hyperimmune-free Turing degree.  Recall that a Turing degree is
\emph{hyperimmune} if it bounds the degree of a function not dominated
by any computable function, and a degree which is not hyperimmune is
\emph{hyperimmune-free}.  A model of the kind we are interested in can
be obtained by iterating and dovetailing the hyperimmune-free basis
theorem of Jockusch and Soare~\cite[Theorem 2.4]{JS-1972}, which
asserts that every infinite computable subtree of $2^{<\om}$ has an
infinite path of hyperimmune-free degree.

\begin{thm}\label{T:FIP_to_OPT}
There exists a computable nontrivial
family of sets for which any maximal subfamily with the $\overline{D}_2$
intersection property must have hyperimmune degree.
\end{thm}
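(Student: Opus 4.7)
The plan is to build $A = \langle A_{\langle e,s\rangle} : e, s \in \omega\rangle$ computably in stages, arranging the sets into disjoint blocks indexed by $e$ so that each block codes a diagonalization against $\Phi_e$. Each $A_{\langle e,s\rangle}$ will contain the distinguishing even number $2\langle e,s\rangle$, as in Remark~\ref{R:FIP_subfamilies}, with all other elements odd. Across distinct blocks (different $e$), every pair of sets will share an odd witness and hence intersect. Within block $e$, the intersection pattern is to be controlled by the approximation to $\Phi_e(e)$: the intent is that if $\Phi_e(e)$ ever converges to a value $v$, then in the final family the sets $A_{\langle e,s\rangle}$ with $s \leq v$ end up pairwise disjoint from each other and disconnected from $A_{\langle e,t\rangle}$ for $t > v$, while these latter sets pairwise intersect; if $\Phi_e(e) \uparrow$, all intra-block pairs intersect.

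To implement this computably, at each stage $u$ I would add a fresh shared odd element to $A_{\langle e,s\rangle}$ and $A_{\langle e,t\rangle}$ for each new pair $(s,t)$ with $s < t \leq u$ only when the current approximation certifies the pair is safe: namely, either $\Phi_e(e)[u]\uparrow$, or $\Phi_e(e)[u]\downarrow = v$ with $\min(s,t) > v$. Given any maximal $\overline{D}_2$-subfamily $B$, I use Remark~\ref{R:FIP_subfamilies} to extract the function $J \leq_T B$ with $B_i = A_{J(i)}$, and from $J$ I read off, for each block $e$, a characteristic index $f(e)$ witnessing the block's contribution to $B$. The core claim to verify is that for every $e$ with $\Phi_e(e) \downarrow = v$, the maximality of $B$ together with the universal cross-block intersections force the block-$e$ portion of $B$ to include indices exceeding $v$, so that one can arrange $f(e) > v$ uniformly $B$-computably. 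Since this defeats every total $\Phi_e$, the function $f \leq_T B$ cannot be dominated by any computable function, so $B$ has hyperimmune degree.

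The principal obstacle is the tension between the online construction and the desired final pattern: since a shared element, once added, cannot be removed, the naive scheme above can produce degenerate maximal subfamilies (for instance, one containing exactly one $A_{\langle e,s\rangle}$ with $s \leq v$ and nothing else from block $e$, or even the computable family $\{A_{\langle e,0\rangle} : e \in \omega\}$ if the intersection pattern is set up incorrectly) that do not reflect $\Phi_e$. Handling this is the crux of the argument, and will require either augmenting the family with sentinel sets that rule out the degenerate configurations by maximality, or analyzing the remaining configurations and extracting a hyperimmune $B$-computable function from each separately; in particular, the timing of when pairs are first activated must be chosen so that the only maximal configurations consistent with the final family are ones from which $f(e) > \Phi_e(e)$ can be read off.
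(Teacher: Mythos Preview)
Your proposal has a genuine gap that your suggested fixes do not close. The decomposition into independent blocks, with universal cross-block intersections, makes the problem too easy: once the blocks are decoupled, building a maximal $\overline{D}_2$-subfamily reduces to choosing a maximal clique in each block separately, and your intra-block pattern always admits a computable such choice.

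Concretely, under your implementation, write $s_0(e)$ for the first stage (if any) at which $\Phi_e(e)$ converges, and $v=\Phi_e(e)$. Since shared elements cannot be withdrawn, every pair $A_{\langle e,s\rangle},A_{\langle e,t\rangle}$ with $s<t<s_0(e)$ is intersected at stage~$t$ (when $\Phi_e(e)[t]\uparrow$), regardless of whether later $t\leq v$. Hence the actual intersection graph on block~$e$ has the two maximal cliques $\{s:s<s_0(e)\}$ and $\{s:s>v\}$, and the first can be selected \emph{computably}: the subfamily $B$ consisting of all $A_{\langle e,s\rangle}$ with $\Phi_e(e)[s]\uparrow$ is computable (this predicate is decidable in $e,s$), has the $\overline{D}_2$ property, and is maximal, since any $A_{\langle e,t\rangle}$ with $t\geq s_0(e)$ is disjoint from $A_{\langle e,0\rangle}\in B$. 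So the family you build has a computable maximal $\overline{D}_2$-subfamily, and the theorem fails for it. Your ``sentinel set'' idea cannot exclude this clique without knowing $s_0(e)$ in advance, and your alternative of ``analyzing the remaining configurations'' fails outright here because this particular $B$ is computable and there is nothing hyperimmune to extract.

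The paper's construction is organized around the opposite idea: rather than decoupling requirements into blocks, it entangles them. For each~$e$ it plants a single witness set $A_{t_e}$ together with infinitely many follower sets $A_{p_{e,n}}$, and it drip-feeds intersections so that (i)~every maximal $\overline{D}_2$-subfamily $B$ is forced, by maximality, to absorb followers $p_{e,n}$ cofinally along its index function~$J$, yet (ii)~if the $J$-computable ``how far until the next follower of each $e'<x$ appears'' function $f$ were dominated by some $\Phi_e$, then at the $e$-acceptable stages the construction would have intersected $A_{t_e}$ with every $A_{J(i)}$ while simultaneously keeping $A_{t_e}$ out of~$B$, contradicting maximality. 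The essential mechanism---delaying the $A_{t_e}$-intersections until viability is certified at $e$-acceptable stages, together with the type~1/type~2 follower bookkeeping that makes this work for all~$e$ at once---has no analogue in a block-decomposed family, and is precisely what your outline is missing.
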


To motivate the proof, which will occupy the rest of this subsection,
we first discuss the simpler construction of a computable
nontrivial family for which any maximal subfamiliy
with the $\overline{D}_2$ intersection property must be noncomputable.  This, in turn, is perhaps
best motivated by thinking how a proof of the contrary could fail.

Suppose we are given a computable nontrivial family \mbox{$A = \langle
A_i : i \in \N \rangle$.}  The most direct method of building a
maximal subfamily $B = \langle B_i : i \in \N \rangle$ with the
$\overline{D}_2$ intersection property, assuming $A$ has no finite
such subfamily, is to let $B_0 = A_i$ for the least $i$ so that $A_i
\neq \emp$, then to let $B_1 = A_j$ for the least $j > i$ such that
$A_i \cap A_j \neq \emp$, and so on.  Of course, this subfamily will
in general not be computable, but we could try to temper our strategy
to make it computable.  An obvious such attempt is the following.  We
first search through the members of $A$ in some effective fashion
until we find the first one that is nonempty, and we let this be
$B_0$.  Then, having defined $B_0,\ldots,B_n$ for some $n$, we search
through $A$ again until we find the first member not among the $B_i$
but intersecting each of them, and let this be $B_{n+1}$.  Now while
this strategy yields a subfamily $B$ which is indeed computable and
has the $\overline{D}_2$ intersection property, $B$ need not be
maximal.  For example, suppose the first nonempty set we discover is
$A_1$, so that we set $B_0 = A_1$.  It may be that $A_0$ intersects
$A_1$, but that we discover this only after discovering that $A_2$
intersects $A_1$, so that we set $B_1 = A_2$.  It may then be that
$A_0$ also intersects $A_2$, but that we discover this only after
discovering that $A_3$ intersects $A_1$ and $A_2$, so that we set $B_2
= A_3$.  In this fashion, it is possible for us to never put $A_0$
into $B$, even though it ends up intersecting each $B_i$.

We can exploit precisely this difficulty to build a family $A =
\langle A_i : i \in \om \rangle$ for which neither the strategy above,
nor any other computable strategy, succeeds.  We proceed by stages, at
each one enumerating at most finitely many numbers into at most
finitely many $A_i$.  By Remark~\ref{R:FIP_subfamilies}, it suffices
to ensure that for each $e$, either $\Phi_e$ is not total, or else
$\langle A_{\Phi_e(i)} : i \in \om \rangle$ is not a maximal subfamily
with the $\overline{D}_2$ intersection property.  We discuss how to
satisfy a single such requirement.  Of course, in the full
construction there will be other requirements, but these will not
interfere with one another.

At stage $s$, we look for the longest nonempty string $\sigma \in \om^{<\om}$
such that for all $i < |\sigma|$, $\Phi_e(i)[s] \downarrow =
\sigma(i)$, and for all $i,j < |\sigma|$, $A_{\sigma(i)}$ and
$A_{\sigma(j)}$ have been intersected by stage~$s$.  At the first
stage that we find such a $\sigma$, we define $t_e$ to be some number
large enough that $A_{t_e}$ does not yet intersect $A_{\Phi_e(i)}$ for
any~$i$.  We then start defining numbers $p_{e,0}, p_{e,1},\ldots$ as
follows.  At each stage, if we do not find a longer such $\sigma$, or if
$t_e$ is in the range of this $\sigma$, we do nothing.  Otherwise,
we choose the least $n$ such that $p_{e,n}$ has not yet been defined,
and define it be some number not yet in the range of $\Phi_e$ and large
enough that $A_{p_{e,n}}$ does not intersect $A_{t_e}$.  We call $p_{e,n}$
a \emph{follower} for $\sigma$.  Then for any $p_{e,m}$ that is already
defined and is a follower for some $\tau \preceq \sigma$, we intersect
$A_{p_{e,m}}$ with $A_{\sigma(i)}$ for all~$i$.  Also, if $\sigma(i) = p_{e,m}$
for some $i$ and $m$, then for the largest such $m$ and for all $j$ with
$\sigma(j) \neq p_{e,m}$, we intersect $A_{\sigma(j)}$ with $A_{t_e}$.

Now suppose that $\Phi_e$ is total and that the subfamily it defines
is a maximal one with the $\overline{D}_2$ intersection property.  The
idea is that $A_{t_e}$ should behave as $A_0$ did in the motivating
example above,  by never entering the subfamily but intersecting all
of its members, thereby giving us a contradiction. For the first part,
note that if $\Phi_e(i) = t_e$ for some $i$ then $\sigma(i) = t_e$ for
some string $\sigma$ as above, and that necessarily $A_{\sigma(j)}
\cap A_{t_e} = \emp$ for some~$j$.  But any string we find at a
subsequent stage will extend $\sigma$ and hence have $t_e$ in its
range, so we will never make $A_{\sigma(j)} = A_{\Phi_e(j)}$ intersect
$A_{t_e} = A_{\Phi_e(i)}$.  Thus, $t_e$ cannot be the range of
$\Phi_e$.  We conclude that $p_{e,n}$ is defined for every~$n$.  For
the second part, note that since each $p_{e,n}$ is a follower for some
initial segment of $\Phi_e$, each $A_{\Phi_e(i)}$ is eventually
intersected with $A_{p_{e,n}}$.  By maximality, then, $p_{e,n}$
belongs to the range of $\Phi_e$ for all $n$, which means that each
$A_{\Phi_e(i)}$ is eventually also intersected with $A_{t_e}$.

This basic idea is the same one that we now use in our proof of
Theorem~\ref{T:FIP_to_OPT}.  But since here we are concerned with
more than just computable subfamilies, it no longer suffices to just play
against those of the form $\langle A_{\Phi_e(i)} : i \in \om \rangle$.
Instead, we must consider all possible subfamilies $\langle A_{J(i)} :
i \in \om \rangle$ for $J \in \om^{\om}$, and show that if $J$ defines
a maximal subfamily with the $\overline{D}_2$ intersection
property then there exists a function $f \leq_T J$ such that for all~$e$,
either $\Phi_e$ is not total or it does not dominate~$f$.
Accordingly, we must now define followers $p_{e,n}$ not only for
those $\sigma \in \om^{<\om}$
that are initial segments of $\Phi_e$, but for all strings that look as
though they can be extended to some such $J \in \om^\om$.
We still enumerate
the followers linearly as
$p_{e,0}, p_{e,1}, \ldots$, even though the strings they are defined as
followers for no longer have to be compatible.

Looking ahead to the verification, fix any $J$ that defines
a maximal subfamily with the $\overline{D}_2$ intersection
property.  We describe
the intuition behind defining $f \leq_T J$ that escapes domination by a single
computable function $\Phi_e$.  (Of course, there
are much easier ways to define $f$ to achieve this, but this definition is
close to the one that will be used in the full construction.)
Much as in the more basic argument above,
the construction will ensure that there are infinitely many~$n$ such that $p_{e,n}$ is 
a follower for some initial segment of~$J$ and belongs to the range
of~$J$.  Then, $f(x)$ can be thought of as telling us how far to go along~$J$ in order to find
one more $p_{e,n}$ in its range.  More precisely,~$f$ is defined along
with a sequence $\sigma_0 \prec \sigma_1 \prec \cdots$ of initial segments
of~$J$.  For each~$x$, $\sigma_{x+1}$ is an extension of $\sigma_x$ whose
range contains a follower $p_{e,n}$ for some $\tau$
with $\sigma_x \preceq \tau \prec \sigma_{x+1}$, and $f(x+1)$ is a number large
enough to bound an element of
$\bigcap_{i < |\sigma_{x+1}|} A_{\sigma_{x+1}(i)}$.
The idea behind this definition is that if $f$ actually is dominated by $\Phi_e$,
then we can modify our basic strategy so that in deciding which members of
$A$ to intersect with $A_{t_e}$ in the construction, we consider not initial segments
of~$\Phi_e$ as before, but strings $\sigma \in \om^{<\om}$ that look like initial
segments $J$.  Then, just as before,
we can show that no such string $\sigma$ can have $t_e$ in its range, and yet
that $A_{\sigma(i)}$ is eventually intersected with $A_{t_e}$ for all $i$.
Thus we obtain the same contradiction we got above, namely that $J$ does not
have $t_e$ in its range and hence cannot be maximal after all.

The main obstacle to this approach is that we do not know which computable function
will dominate $f$, if $f$ is in fact computably dominated, and so we cannot use its index in the
definition of $f$.  One way to remedy this is to make $f(x)$ large enough to
find not only the next $p_{e,n}$ in the range of $J$ for some fixed $e$, but the
next $p_{e,n}$ for each $e < x$.  This, in turn, demands that we define followers in
such a way that $p_{e,n}$ is defined for every~$e$ and~$n$, regardless of whether
$\Phi_e$ is total.  But then we must define followers $p_{e,n}$ even for
strings that already contain $t_e$ in their range, since we do not know ahead of time
that this will not happen for all sufficiently long strings.  In the construction, then, we distinguish
between two types of followers, those defined as followers for strings that have $t_e$
in their range, and those defined as followers for strings that do not.  We will see in
the verification that we can restrict ourselves to strings of the latter type, so this is
not a serious complication.

We turn to the formal details.  We adopt the convention that for all $e,x,y,s \in \om$,
if $\Phi_e(x)[s] \downarrow = y$, then $e,x,y \leq s$, and
$\Phi_e(z)[s] \downarrow$ for all $z < x$.  Let $s_{e,x}$ denote the
least $s$ such that $\Phi_e(x)[s] \downarrow$, which may of course be
undefined if $\Phi_e$ is not total.  Then to show that some function is not
computably dominated it suffices to show it is not dominated by the map
$x \mapsto s_{e,x}$ for any $e$.

\begin{proof}[Proof of Theorem~\ref{T:FIP_to_OPT}] 
We build a computable $A = \langle A_i : i \in \om \rangle$ by stages.  
Let $A_i[s]$ be the set of elements which have been enumerated into $A_i$
by stage $s$, which will always be finite. Say a nonempty string
$\sigma \in \om^{<\om}$ is {\em bounded} by $s$ if
\begin{itemize}
\item $|\sigma| \leq s$;

\item for all $i < |\sigma|$, $\sigma(i) \leq s$;

\item for all $i,j < |\sigma|$, there is a $y \leq s$ with $y \in
A_{\sigma(i)}[s] \cap A_{\sigma(j)}[s]$.
\end{itemize} 

\medskip
\noindent {\em Construction.} For all $i \neq j$, let $2i \in A_i$ and
$2j \notin A_i$.  At stage $s \in \om$, assume inductively that for
each $e$, we have defined finitely many numbers $p_{e,n}$, $n \in
\om$, each labeled as either a {\em type 1 follower} or a {\em type 2
follower} for some string $\sigma \in \om^{<\om}$.  Call a number $x$
{\em fresh} if $x$ is larger than $s$ and every number that has been
mentioned during the construction so far.

We consider consecutive substages, at substage $e \leq s$ proceeding
as follows.

\medskip
\noindent {\em Step 1.} If $t_e$ is undefined, define it to be a fresh
number.  If $t_e$ is defined but $s_{e,0} = s$, redefine $t_e$ to be a
fresh large number.  In the latter case, change any type 1 follower
$p_{e,n}$ already defined to be a type 2 follower (for the same string).

\medskip
\noindent {\em Step 2.} Consider any $\sigma \in \om^{<\om}$ bounded
by~$s$.  Choose the least $n$ such that $p_{e,n}$ has not been
defined, and define it to be a fresh number.  Then, for each $i <
|\sigma|$, enumerate a fresh odd number into $A_{p_{e,n}} \cap
A_{\sigma(i)}$.  If there is an $i < |\sigma|$ such that $\sigma(i) =
t_e$, call $p_{e,n}$ a type 1 follower for $\sigma$, and otherwise,
call $p_{e,n}$ a type 2 follower for~$\sigma$.

\medskip
\noindent {\em Step 3.} Consider any $p_{e,n}$ defined at a stage
before $s$, and any $\sigma \in \om^{<\om}$ bounded by $s$ that
extends the string that $p_{e,n}$ was defined as a follower for. If
$p_{e,n}$ is a type 1 follower then, for each $i < |\sigma|$,
enumerate a fresh odd number into $A_{p_{e,n}} \cap A_{\sigma(i)}$.
If $p_{e,n}$ is a type 2 follower, then do this only for the $\sigma$
such that $\sigma(i) \neq t_e$ for all~$i$.

\medskip
\noindent {\em Step 4.}  Suppose there is an $x$ such that
$\Phi_e(x)[s] \downarrow$, and $s = s_{e,x}$ for the largest such~$x$.
Call a string $\sigma \in \om^{<\om}$ {\em viable} for $e$ at stage
$s$ if there exist $\sigma_0 \prec \cdots \prec \sigma_x = \sigma$
satisfying
\begin{itemize}
\item $|\sigma_0| = 1$;
\item for each $i \leq x$, $\sigma_i$ is bounded by $s_{e,i}$;
\item for each $i < x$ and $j \leq i$, there exists a $k$ with
$|\sigma_i| \leq k < |\sigma_{i+1}|$ and an $n$ such that $p_{j,n}$ is
defined and is a follower for some $\tau$ with $\sigma_i \preceq \tau
\prec \sigma_{i+1}$, and $\sigma_{i+1}(k) = p_{j,n}$.
\end{itemize} If $x > e$, let $k^\sigma_{x,e}$ be the least $k$ that
satisfies the last condition above for $i = x-1$ and $j = e$.

Call $s$ an {\em $e$-acceptable} stage if for every string $\sigma$
viable for $e$ at this stage,
\begin{itemize}
\item $k^\sigma_{e,x}$ is defined;

\item $A_{\sigma(k^\sigma_{e,x})}[s] \cap A_{t_e}[s] = \emp$.

\item there is an $i < k^\sigma_{e,x}$ such that

\begin{itemize}
  \item $\sigma(i) = p_{e,n}$ for some $n$;
  \item $A_{\sigma(i)}[s] \cap A_{t_e}[s] = \emp$;
  \item for all $j \leq i$ and all $\tau$ viable for $e$ at stage $s$,
        $\sigma(j) \neq \tau(k^\tau_{e,x})$.
\end{itemize}

\end{itemize} 
If $s$ is $e$-acceptable, then for each viable
$\sigma$, choose the largest such $i < k^\sigma_{x,e}$, and enumerate
a fresh odd number into $A_{\sigma(j)} \cap A_{t_e}$ for each $j \leq
i$.

\medskip
\noindent {\em Step 5.} If $e< s$, go to the next substage.  If $e <
s$, then for each $i$ and each $x$ less than or equal to the largest
number mentioned during the construction at stage $s$ and and not
enumerated into $A_i$, enumerate $x$ into the complement of $A_i$.
Then go to stage $s+1$.

\medskip
\noindent {\em End construction.}

\medskip
\noindent {\em Verification.}  It is clear that $A$ is a computable
nontrivial family.  Suppose $B = \langle B_i : i \in \N \rangle$ is a
maximal subfamily of $A$ with the $\overline{D}_2$ intersection
property.  Choose the unique $J \in \om^\om$ such that $B_i =
A_{J(i)}$ for all~$i$.

\begin{claim} 
For each $e \in \om$ and each $\sigma \prec J$, there is
an $n \in \om$ such that $p_{e,n}$ is a follower for some $\tau$ with
$\sigma \preceq \tau \prec J$ and $A_{p_{e,n}} \in B$.
\end{claim}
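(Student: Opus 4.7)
The strategy is to locate a follower $p_{e,n}$ for some $\tau$ with $\sigma \preceq \tau \prec J$ such that $A_{p_{e,n}}$ intersects every $B_i$; maximality of $B$ with the $\overline{D}_2$ intersection property will then force $A_{p_{e,n}} \in B$.

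I would begin by observing that every initial segment $\tau$ of $J$ is eventually bounded in the sense of the construction. Indeed, for all $i,j < |\tau|$ we have $A_{\tau(i)} \cap A_{\tau(j)} = B_i \cap B_j \neq \emp$ by the $\overline{D}_2$ intersection property of $B$, so each of these finitely many intersections is witnessed by some stage. Thus at every sufficiently late stage $s$, Step~2 of substage $e$ defines a new follower $p_{e,n}$ for $\tau$, giving infinitely many such $n$. Moreover, since $t_e$ is redefined at most once in Step~1 (namely at the unique stage, if any, at which $\Phi_e(0)$ first converges), there is a stage $s^*$ after which $t_e$ is permanent, and any follower defined at a stage $\geq s^*$ has a final type 1 / type 2 label determined by whether the stable $t_e$ belongs to the range of the string it was defined for.

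Next I would split on whether $t_e \in \operatorname{range}(J)$. If $t_e \notin \operatorname{range}(J)$, pick any $\tau$ with $\sigma \preceq \tau \prec J$ and a stage $s \geq s^*$ at which $\tau$ is bounded, and let $p_{e,n}$ be the follower defined for $\tau$ there; since $t_e \notin \operatorname{range}(\tau)$, this follower is type 2. For each $i$, any $\tau' \prec J$ extending $\tau$ of length $> i$ is eventually bounded and still avoids $t_e$ in its range, so Step~3 intersects $A_{p_{e,n}}$ with $A_{\tau'(i)} = B_i$. If instead $J(k) = t_e$ for some $k$, I choose $\tau$ with $\sigma \preceq \tau \prec J$ and $|\tau| > k$, and pick a follower $p_{e,n}$ defined for $\tau$ at a stage $\geq s^*$; now the follower is type 1, and Step~3 again intersects $A_{p_{e,n}}$ with every $B_i$ via extensions $\tau' \prec J$ of $\tau$. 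In both cases $A_{p_{e,n}}$ meets every member of $B$, so $B \cup \{A_{p_{e,n}}\}$ still has the $\overline{D}_2$ intersection property and maximality of $B$ yields $A_{p_{e,n}} \in B$.

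The main subtlety is coordinating Step~1 (which can change $t_e$ and retroactively flip type 1 followers to type 2) with Step~2 (which assigns the type of a follower when it is first defined): one has to wait for $t_e$ to stabilize before picking the follower, and in the case $t_e \in \operatorname{range}(J)$ one must choose $\tau$ long enough to already contain $t_e$, so that the selected follower is of the correct type for Step~3's intersection rule to apply to all extensions of $\tau$ in $J$. Step~4's enumerations are irrelevant, as they can only create additional intersections, never destroy them.
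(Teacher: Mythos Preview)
Your proof is correct and follows essentially the same route as the paper: wait until $t_e$ has stabilized, split on whether $t_e$ lies in $\operatorname{range}(J)$, choose $\tau \succeq \sigma$ accordingly so that the follower $p_{e,n}$ picked for $\tau$ has the appropriate type, and then use Step~3 on longer and longer initial segments of $J$ to intersect $A_{p_{e,n}}$ with every $B_i$, invoking maximality of $B$ to conclude. The only difference is that the paper additionally threads through an arbitrary bound $m$ to obtain $n>m$, yielding infinitely many such followers; this strengthening is not required by the claim as stated nor by its subsequent use.
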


\begin{proof} 
First, notice that for each $\sigma \preceq J$, there
are infinitely many $s$ that bound~$\sigma$.  Hence, since at any such
stage $s$ of the construction (specifically, at step~2 of substage
$e$), $p_{e,n}$ gets defined for a new $n \in \om$, it follows that
$p_{e,n}$ gets defined for all~$n$.  Second, note that $t_e$
necessarily gets defined during the construction, and then gets
redefined at most once.  We use $t_e$ henceforth to refer to its final
value.

Fix $\sigma \prec J$ and $m \in \om$, and let $s$ be a stage by which
$p_{e,n}$ has been defined for all $n \leq m$.  Let $\tau$ be either
$\sigma$ if $A_{t_e} \notin B$ or $\sigma(i) = t_e$ for some $i <
|\sigma|$, or an initial segment of $J$ extending $\sigma$ long enough
that there exists a $i < | \tau|$ with $\tau(i) = t_e$.  By our
observation above, there exists a $t \geq \max\{s,e\}$ that bounds
$\tau$.  Let $p_{e,n}$ be the follower for $\tau$ defined at stage
$t$, substage $e$, step~2, of the construction, so that necessarily $n
> m$.  Note that $p_{e,n}$ is a type 2 follower if and only if
$A_{t_e} \notin B$.

Choose any $\upsilon$ with $\tau \preceq \upsilon \prec J$, and let $u
> t$ be large enough to bound~$\upsilon$.  Then at stage $u$, substage
$e$, step~3, of the construction, $A_{p_{e,n}}$ is made to intersect
$A_{\upsilon(i)}$ for each $i < | \upsilon |$ (in case $p_{e,n}$ is a
type 2 follower, this is because $\upsilon(i) \neq t_e$ for all $i$).
Since $\upsilon $ was arbitrary, it follows that $A_{p_{e,n}} \cap
A_{J(i)}$ for all $i \in \om$.  Hence, by maximality of $B$, it must
be that $A_{p_{e,n}} \in B$.
\end{proof}

Now define a function $f : \N \to \N$, and a sequence $\sigma_0 \prec
\sigma_1 \prec \cdots$ of initial segments of $J$, as follows.  Let
$\sigma_0 = J \res 1$ and $f(0) = 2J(0)$, and assume that we have
$f(x)$ and $\sigma_x$ defined for some $x \geq 0$.  Let $f(x+1)$ be
the least $s$ such that there exists a $\sigma \in \om^{<\om}$
satisfying
\begin{itemize}
\item $\sigma_x \prec \sigma \prec J$;

\item $\sigma$ is bounded by $s$;

\item for each $j \leq x$, there exists a $k$ with $|\sigma_x| \leq k
< |\sigma|$ and an $n$ such that $p_{j,n}$ is defined by stage $s$ of
the construction and is a follower for some $\tau$ with $\sigma_x
\preceq \tau \prec \sigma$, and $\sigma(k) = p_{j,n}$.

\end{itemize} 
Let $\sigma_{x+1}$ be the least $\sigma$ satisfying the
above conditions.  By the preceding claim, $f(x)$ and $\sigma_x$ are
defined for all~$x$.

Clearly, $f \leq_T B$.  Seeking a contradiction, suppose $e$ is such that $f(x)
\leq s_{e,x}$ for all~$x$.  A simple induction then shows that $\sigma_x$
is viable for $e$ at stage $s_{e,x}$.  So in particular, for every $x$, there is
a $\sigma$ viable for $e$ at stage $s_{e,x}$.  We fix the present value
of $e$ for the remainder of the proof, including in the following claims.

\begin{claim} 
If $\sigma$ is viable for $e$ at stage $s_{e,0}$, then
$A_{\sigma(0)}$ is not intersected with $A_{t_e}$ before step 4 of
substage $e$ of the first $e$-acceptable stage.
\end{claim}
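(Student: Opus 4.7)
The plan is to prove a stronger statement by induction through the time-ordering of the construction: throughout the interval from step~1 of substage~$e$ of stage $s_{e,0}$ up to (but not including) step~4 of substage~$e$ of the first $e$-acceptable stage $s_a$, every index $i$ for which $A_i$ shares an odd element with $A_{t_e}$ satisfies $i > s_{e,0}$. Since $\sigma(0) \leq s_{e,0}$ (from viability) and the only even elements of $A_{t_e}$ and $A_{\sigma(0)}$ are $2t_e$ and $2\sigma(0)$ respectively (with $\sigma(0) \neq t_e$, as $t_e$ is freshly chosen in step~1 of substage~$e$ of stage $s_{e,0}$), this stronger statement will immediately yield the claim.

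I first observe that every odd enumeration during the construction deposits a fresh odd into exactly two of the $A_i$: in steps~2 and~3 of substage~$e'$ these are $A_{p_{e',n}}$ and $A_{\sigma''(i)}$, and in step~4 of substage~$e'$ they are $A_{\sigma_v(j)}$ and $A_{t_{e'}}$. So the set of indices sharing an odd with $A_{t_e}$ can only grow by examining these actions one at a time. The base of the induction is immediate: right after step~1 of substage~$e$ of stage $s_{e,0}$ the new $t_e$ is fresh, $A_{t_e}$ contains only $2t_e$, and the odd-sharing set is empty.

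For the inductive step I handle each kind of action. A step~2 action of substage~$e'$ at stage $s \geq s_{e,0}$ can add an odd-sharing index only as $p_{e',n}$, which is fresh at stage $s$ and hence satisfies $p_{e',n} > s \geq s_{e,0}$. A step~3 action on an existing follower $p_{e',n}$ with defining string $\tau$ requires a bounded extension $\sigma''$ of $\tau$ containing $t_e$ in its range; boundedness forces, for each position $k'$ of $\sigma''$ with $\sigma''(k') \neq t_e$, that $A_{t_e} \cap A_{\sigma''(k')}$ be nonempty below $s$ via an odd (since $2t_e \notin A_{\sigma''(k')}$ and $2\sigma''(k') \notin A_{t_e}$), so by the inductive hypothesis $\sigma''(k') > s_{e,0}$. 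All positions of $\sigma''$ are therefore either $t_e$ or $> s_{e,0}$; in particular $\tau(0) > s_{e,0}$, and since $\tau(0) \leq s^*$ where $s^*$ is the defining stage of $p_{e',n}$, we obtain $s^* > s_{e,0}$ and hence $p_{e',n} > s^* > s_{e,0}$. Step~4 of substage~$e$ does not fire before $s_a$. A step~4 action of substage $e' \neq e$ can share an odd with $A_{t_e}$ only through $t_e = \sigma_v(j)$ for some $j$; the same boundedness analysis applied to the $e'$-viable $\sigma_v$ shows every position of $\sigma_v$ is either $t_e$ or $> s_{e,0}$, hence $\sigma_v(0) > s_{e,0}$, but $e'$-viability gives $\sigma_v(0) \leq s_{e',0}$, forcing $s_{e',0} > s_{e,0}$; the redefinition of $t_{e'}$ in step~1 of substage~$e'$ of stage $s_{e',0}$ then provides $t_{e'} > s_{e',0} > s_{e,0}$, so the newly added odd-sharing index $t_{e'}$ satisfies the inductive hypothesis.

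The main obstacle is the boundedness analysis in the step~3 and step~4-of-$e'$ subcases, where I must use the inductive hypothesis to tightly constrain which indices can appear alongside $t_e$ in a bounded string, and combine this with the freshness properties of the various $t$-values and followers. Once the stronger statement is established, the claim is immediate: $\sigma(0) \leq s_{e,0}$ places $\sigma(0)$ outside the odd-sharing set at every time point in the relevant interval, and together with the even-element analysis this gives $A_{\sigma(0)} \cap A_{t_e} = \emp$ throughout.
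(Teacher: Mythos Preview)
Your argument is correct, and it establishes the claim via a genuinely different decomposition than the paper's.

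The paper argues by minimal counterexample: it takes the \emph{earliest} moment at which some $A_{\sigma'(0)}$ (with $\sigma'$ viable for~$e$ at $s_{e,0}$) is intersected with $A_{t_e}$, and then does a case analysis on which step ($k=3$ or $k=4$, substage $i\neq e$) caused the intersection. In each case it locates a strictly earlier intersection of $A_{t_e}$ with some $A_{\rho(j)}$ or $A_{\tau(0)}$ that is itself viable at $s_{e,0}$, contradicting minimality. Thus the paper's invariant is stated only for indices of the form $\sigma'(0)$ with $\sigma'$ viable at $s_{e,0}$, and the bookkeeping involves checking that the auxiliary strings $\rho$ or $\tau$ are themselves bounded by $s_{e,0}$ so that their entries are viable.

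You instead run a forward induction through the time-ordering of the construction, maintaining the stronger numerical invariant that \emph{every} index $i$ with $A_i$ sharing an odd element with $A_{t_e}$ satisfies $i > s_{e,0}$. This invariant is preserved under each step-2, step-3, and step-4 ($e'\neq e$) action by exactly the boundedness-and-freshness analysis you give: the key point is that any bounded string containing $t_e$ has all its other entries already odd-sharing with $A_{t_e}$, hence (by the inductive hypothesis) all $> s_{e,0}$, which then forces the newly added index ($p_{e',n}$ or $t_{e'}$) to be $> s_{e,0}$ as well. The conclusion for $\sigma(0)$ follows at once from $\sigma(0)\leq s_{e,0}$.

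What your approach buys is that you never need to verify that the auxiliary strings encountered in the case analysis are themselves viable for $e$ at $s_{e,0}$; the purely numerical bound $> s_{e,0}$ propagates more directly. What the paper's approach buys is that it is phrased entirely in terms of the notion (viability) that will be used in the subsequent claims, so the connection to Claim~\ref{C:fiphyp1} is slightly more transparent. The underlying case analysis is the same in both.
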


\begin{proof} 
Note that necessarily $|\sigma| = 1$, and that $s_{e,0}$
is not $e$-acceptable.  At step 1 of substage $e$ of stage
$s_{e,0}$, $t_e$ gets redefined to be a fresh number.  Viability at
stage $s_{e,0}$ just means that $\sigma$ is bounded by $s_{e,0}$,
and hence $A_{\sigma(0)}$ cannot intersect $A_{t_e}$ at the end
of this step.  Hence, if we let $s$ be the stage at which $A_{\sigma(0)}$
is first intersected with
$A_{t_e}$, then $s \geq s_{e,0}$.  Suppose the intersection takes
place at step $k$ of substage $i$ of stage~$s$.  Then in particular,
this point in the construction comes strictly after step 4 of substage
$e$ of stage $s_{e,0}$.

It suffices to prove the claim under the following assumption:
there is no $\sigma'$ viable for $e$ at stage $s_{e,0}$ such that
$A_{\sigma(0)}$ is first intersected with $A_{t_e}$ before step $k$ of
substage $i$ of stage~$s$.  Note also that $k$ must be 3 or 4, since
the only other step at which different members of $A$ are intersected
is step~2, but one of the two sets intersected there is always indexed
by a fresh number.

First suppose $k = 3$.  Then it must be that for some~$n$, and for
some~$\tau$ extending the string~$\rho$ for which $p_{i,n}$ is a follower,
we are intersecting~$A_{p_{i,n}}$ with~$A_{\tau(i)}$ for all $i < |\tau|$.
Since~$t_e$ cannot equal~$p_{i,m}$ for any~$m$,
it must be that $\sigma(0) = p_{i,n}$, and hence that there
is a a $j < |\tau|$ such that $\tau(j) = t_e$.  Now $\sigma(0)$ is bounded by
$s_{e,0}$ and hence is not fresh after step 4 of substage $e$ of stage~$s_{e,0}$,
whereas $p_{i,n}$, when defined, is defined to be a fresh number.  Thus, since
$\sigma(0) = p_{i,n}$, $p_{i,n}$ must be defined as a follower for $\rho$ before
step 4 of substage $e$ of stage $s_{e,0}$.  At that point in the construction, by definition,
$\rho$ has to be bounded, so $\rho$ must also be bounded by $s_{e,0}$.
In particular, $\rho(j)$ must be viable for $e$ at stage $s_{e,0}$, for every $j$.
This means $\rho(j) \neq t_e$, since $t_e$ is certainly not viable at stage $s_{e,0}$.
But since $\tau$ has to be bounded by $s_{e,0}$ in order for us to be considering
it, it must be that $A_{\rho(j)}$ and $A_{t_e}$ are intersected at some earlier point
in the construction.  This contradicts our assumption above.

Now suppose $k = 4$ but $i \neq e$.  Then it must be that $s$ is
$i$-acceptable.  Since $t_e$ cannot equal $t_i$, and since members
of $A$ are only intersected at step 4 with $A_{t_i}$, it must be that
$\sigma(0) = t_i$.  There must also be a $\tau \in \om^{<\om}$ such
that $\tau$ is viable for $i$ at stage $s$ and $\tau(j) = t_e$ for
some $j < |\tau|$.  Since $s$ is $i$-acceptable, $s_{i,0}$ is defined.
Now $\sigma(0)$ is bounded by
$s_{e,0}$ and hence is not fresh after step 4 of substage $e$ of
stage~$s_{e,0}$, whereas at step 1 of substage $i$ of stage
$s_{i,0}$, $t_i$ is redefined to be a fresh number.  Thus, since
$\sigma(0) = t_i$, step 1 of substage $i$ of stage $s_{i,0}$ cannot
happen after step 4 of substage $e$ of stage $s_{e,0}$.  So,
since the one bit string $\tau(0)$ has to be viable for $i$ at step $s_{i,0}$
by definition of viability, it follows that $\tau(0)$ is also viable at stage
$s_{e,0}$.  Hence, $\tau(0) \neq t_e$ since $t_e$ is not viable at stage
$s_{e,0}$.  But since $\tau$ has to be bounded by $s_{e,0}$, it must be
that $A_{\tau(0)}$ and $A_{t_e}$ are intersected at some earlier point
in the construction.  This again gives us a contradiction.

We conclude that $k = 4$ and $i = e$, that is, that $A_{\sigma(0)}$ is
first intersected with $A_{t_e}$ at step~4 of substage $e$ of
stage~$s$.
This forces $s$ to be $e$-acceptable, so the claim is proved.
\end{proof}

\begin{claim}\label{C:fiphyp1} 
Suppose $x > e$ and $\sigma \in
\om^{<\om}$ is viable for $e$ at stage $s_{e,x}$.  Then for some $i <
|\sigma|$, $\sigma(i) = p_{e,n}$ for some $n$ and $A_{\sigma(i)}$ and
$A_{t_e}$ are disjoint through the end of stage~$s_{e,x}$.
\end{claim}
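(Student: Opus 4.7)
The plan is to take $i = k^\sigma_{x,e}$ and argue that $A_{\sigma(i)} \cap A_{t_e} = \emp$ through the end of stage $s_{e,x}$. By the viability hypothesis applied to indices $x-1$ and $e$, $\sigma(k^\sigma_{x,e}) = p_{e,n}$ for some~$n$, where $p_{e,n}$ is a follower for some $\tau$ with $\sigma_{x-1} \preceq \tau \prec \sigma$. Let $t$ denote the stage at which $p_{e,n}$ was defined in step~2 of substage~$e$. The verification proceeds by inspecting, one at a time, every step of the construction that could enlarge $A_{p_{e,n}} \cap A_{t_e}$.

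First, I would show that step~2 at stage~$t$ contributes nothing. If $t < s_{e,0}$, the current value of~$t_e$ at stage~$t$ differs from the final one (the latter being freshly chosen at step~1 of substage~$e$ of~$s_{e,0}$), so even if the current $t_e$ appears in the range of~$\tau$, the final $A_{t_e}$ is untouched. If $t \geq s_{e,0}$, I would establish by induction on~$x$, taking the preceding claim as the base~$x = e+1$, that the final~$t_e$ is not in the range of~$\tau$. Either way, $p_{e,n}$ ends up a type~2 follower with respect to the final~$t_e$, so step~3 at subsequent stages only intersects $A_{p_{e,n}}$ with sets $A_{\sigma'(j)}$ for which $\sigma'(j) \neq t_e$ and therefore cannot enlarge $A_{p_{e,n}} \cap A_{t_e}$ either.

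Second, I would rule out step~4 at every $e$-acceptable stage $s \leq s_{e,x}$. For $s < t$, freshness of $p_{e,n}$ at~$t$ forces $p_{e,n}$ to exceed every number mentioned by stage~$s$, so $p_{e,n}$ cannot appear in the range of any string bounded at~$s$, and $A_{p_{e,n}}$ is untouched. For $s = s_{e,y}$ with $t \leq s \leq s_{e,x}$, the third clause of $e$-acceptability guarantees, for every position $\sigma'(j)$ being intersected with~$A_{t_e}$, that $\sigma'(j) \neq \tau''(k^{\tau''}_{e,y})$ for each viable~$\tau''$ at that stage. Taking $\tau'' = \sigma$ at stage~$s_{e,x}$ pins $p_{e,n} = \sigma(k^\sigma_{x,e})$ to a protected position; for the earlier stages $s_{e,y}$ with $y < x$, taking $\tau'' = \sigma_y$ together with the freshness argument above handles the remaining cases.

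The hard part will be the inductive sub-claim in the second paragraph, namely that $t_e \notin \operatorname{range}(\tau)$ when $t \geq s_{e,0}$. The base case follows essentially from the preceding claim and the fact that $t_e$ is fresh at its redefinition, but the inductive step requires arguing that an extension $\tau$ of $\sigma_{x-1}$ cannot acquire $t_e$ in its range without violating the safety of the position in~$\sigma_{x-1}$ furnished by the induction hypothesis. The interplay between the type~1/type~2 dichotomy, the relabeling at stage~$s_{e,0}$, and the viability chain is the most delicate piece of the argument; once it is in hand, the rest is straightforward bookkeeping through the four construction steps.
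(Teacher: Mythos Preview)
Your plan to always take $i = k^\sigma_{x,e}$ diverges from the paper's proof and has a real gap. The paper argues by induction on $x$ with a \emph{case split}: if $s_{e,x}$ is $e$-acceptable, take $i = k^\sigma_{x,e}$ and appeal directly to the second and third bullets in the definition of $e$-acceptability (the second bullet already says $A_{\sigma(k^\sigma_{x,e})}[s]\cap A_{t_e}[s]=\emptyset$, and the third-bullet protection keeps it so through step~4); if $s_{e,x}$ is \emph{not} $e$-acceptable, the paper instead \emph{inherits} the witness from the string $\tau\prec\sigma$ viable at $s_{e,x-1}$, noting that this witness is a type~2 follower which can only be touched at step~4 of an $e$-acceptable stage, and there is no such stage strictly between $s_{e,x-1}$ and $s_{e,x}$.

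The problem with your approach is precisely the non-$e$-acceptable case. There the second bullet of acceptability is unavailable, so you must argue directly that $A_{p_{e,n}}$ (with $p_{e,n}=\sigma(k^\sigma_{x,e})$) was never intersected with $A_{t_e}$ at any earlier $e$-acceptable stage $s_{e,y}$, $y<x$. Your proposal ``take $\tau''=\sigma_y$'' does not do this: the third-bullet protection at stage $s_{e,y}$ shields $\sigma_y(k^{\sigma_y}_{e,y})$, which is a \emph{different} follower $p_{e,m}$ (one attached to a string between $\sigma_{y-1}$ and $\sigma_y$), not your $p_{e,n}$ (attached to a string between $\sigma_{x-1}$ and $\sigma$). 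And the freshness argument only covers stages $s_{e,y}<t$; you give no reason why $t$ must exceed every earlier $s_{e,y}$, and indeed $\tau$ may equal $\sigma_{x-1}$ and be bounded quite early, so $t\le s_{e,y}$ for some $y<x$ is not ruled out. In that situation $p_{e,n}$ can appear in the range of some viable $\sigma'$ at stage $s_{e,y}$ at a position $j\le i'$ that is not protected by any $\tau''(k^{\tau''}_{e,y})$, and $A_{p_{e,n}}$ gets intersected with $A_{t_e}$. The paper's inherited-witness trick is exactly what circumvents this; your ``inductive sub-claim'' and the vague appeal to $\sigma_y$ do not.
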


\begin{proof} 
We proceed by induction on $x$, beginning with $x =
e+1$.  Fix~$\sigma$.  By construction, $s_{e,x}$ is the first stage
that can be $e$-acceptable, so by the preceding claim, $A_{\sigma
(0)}$ has empty intersection with $A_{t_e}$ at the beginning of
step 4 of substage $e$ of this stage.  Hence, $\sigma(i) \neq t_e$ for
all $i < |\sigma|$ since $\sigma$ must be bounded by $s_{e,x}$.
Now by viability, there is an $i$ and an $n$ such
that $\sigma(i) = p_{e,n}$ and is a follower for some $\tau$ with
$\sigma(0) \preceq \tau \prec \sigma$.  It follows that $p_{e,n}$
is a type 2 follower.  Furthermore, it is easy to see that for any type 2
follower $p_{e,m}$, $A_{p_{e,m}}$ can only be made to intersect
$A_{t_e}$ at step 4 of substage $e$ of an $e$-acceptable stage.
Thus, $A_{\sigma(i)}$ must
be disjoint from $A_{t_e}$ at the beginning of step 4 of substage $e$
of stage $s_{e,x}$.  Additionally, if $s_{e,x}$ is not
$e$-acceptable, then nothing is done at step 4 of substage $e$, and
hence $A_{\sigma(i)}$ is not intersected with $A_{t_e}$ during the
course of the rest of the stage.  If $s_{e,x}$ is $e$-acceptable,
then in fact there must exist a $i$ as above, namely $i =
k^\sigma_{x,e}$, such that $A_{\sigma(i)}$ is not intersected with
$A_{t_e}$ at step 4 of substage $e$, and hence not during the course
of the rest of the stage either.  This proves the base case of the
induction.

Now let $x > e$ be given and suppose the claim holds for~$x$.  Given
$\sigma \in \om^{<\om}$ viable for $e$ at stage $s_{e,x+1}$, there is
some $\tau \prec \sigma$ viable for $e$ at stage $s_{e,x}$.  If
$s_{e,x+1}$ is not $e$-acceptable, then the same $i$ witnessing that
the claim holds for $x$ and $\tau$ witnesses also that it holds for
$x+1$ and~$\sigma$.  This is because $\tau(i)$ is necessarily a type 2
follower, and $A_{\tau(i)}$ is consequently not intersected with
$A_{t_e}$ until step 4 of substage $e$ of some $e$-acceptable stage
after stage~$s_{e,x}$.  If $s_{e,x+1}$ is $e$-acceptable, then just
as in the base case, viability of $\sigma$ implies that for $i =
k^\sigma_{x+1,e}$, $A_{\sigma(i)}$ does not intersect $A_{t_e}$ at the
beginning of step 4 of substage $e$ of stage $s_{e,x+1}$, and is not
made to do so by its end.
\end{proof}

\begin{claim}\label{C:fiphyp2} 
There exist infinitely many $e$-acceptable stages.
\end{claim}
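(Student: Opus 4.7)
My plan is to prove the claim by showing that the stage $s_{e,x}$ is $e$-acceptable for every sufficiently large $x$; since the standing assumption that $f(x) \leq s_{e,x}$ for all $x$ forces $\Phi_e$ to be total, this immediately delivers infinitely many $e$-acceptable stages. I will aim for an induction on $x$, leaning on Claim \ref{C:fiphyp1} and on the freshness convention in the definition of new followers.

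Fix $x > e + 1$ and a string $\sigma$ viable for $e$ at stage $s = s_{e,x}$, with witnessing chain $\sigma_0 \prec \cdots \prec \sigma_x = \sigma$. For the first bullet in the definition of $e$-acceptability, applying the last clause of viability with the pair $(x-1, e)$ (valid since $e \leq x - 1$) yields a position $k$ and an index $n$ with $\sigma(k) = p_{e,n}$ and $p_{e,n}$ a follower for some $\tau$ with $\sigma_{x-1} \preceq \tau \prec \sigma$, so $k^\sigma_{e,x}$ is defined. For the second bullet, I would rerun the argument of Claim \ref{C:fiphyp1} at the specific index $k^\sigma_{e,x}$: boundedness of $\sigma$ by $s$, together with the persistence reasoning there, forces $\sigma(i) \neq t_e$ for all $i < |\sigma|$, hence $\tau$ also avoids $t_e$ in its range and $p_{e,n}$ must be a type 2 follower. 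Since $p_{e,n}$ is introduced fresh at some stage strictly greater than $s_{e,x-1}$, the numerical bound $p_{e,n} > s_{e,x-1}$ precludes $p_{e,n}$ from appearing inside any string viable at an earlier stage $s_{e,y}$ with $y < x$, and since type 2 followers can only be intersected with $A_{t_e}$ at step 4 of substage $e$ of an $e$-acceptable stage, we obtain $A_{\sigma(k^\sigma_{e,x})}[s] \cap A_{t_e}[s] = \emp$.

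The main obstacle is the third bullet, which asks for an $i < k^\sigma_{e,x}$ with $\sigma(i) = p_{e,m}$, $A_{\sigma(i)}[s] \cap A_{t_e}[s] = \emp$, and the freshness clause $\sigma(j) \neq \tau(k^\tau_{e,x})$ for every $j \leq i$ and every $\tau$ viable at $s$. My candidate will be the position inside $\sigma_{x-1}$ supplied by applying the last clause of viability with the pair $(x - 2, e)$, which carries a follower $p_{e,m}$ for some string between $\sigma_{x-2}$ and $\sigma_{x-1}$ and is thus fresh at some stage strictly greater than $s_{e,x-2}$. The freshness clause then follows from a direct numerical comparison: $\sigma(j)$ lies inside $\sigma_{x-1}$, which is bounded by $s_{e,x-1}$, whereas each $\tau(k^\tau_{e,x})$ is a follower for a string extending $\tau_{x-1}$ and hence introduced fresh at a stage strictly greater than $s_{e,x-1}$, so the two cannot coincide. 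The technical heart of the argument is the second condition, that $A_{p_{e,m}}$ has not been intersected with $A_{t_e}$ at any earlier $e$-acceptable stage; since $p_{e,m}$ was defined fresh only after stage $s_{e,x-2}$, the only prior stage at which this could have happened is $s_{e,x-1}$, and I would handle this by choosing $p_{e,m}$ still further back in the viability chain (enlarging $x$ by a bounded amount if necessary) and invoking the freshness clause that the inductive hypothesis provides at $s_{e,x-1}$ to constrain which values $\sigma'(j)$ can have been touched at step 4 there. Carrying out this bookkeeping — pigeonholing across the finitely many viable strings at each stage and tracking shared followers between their witnessing chains — is where I expect the main combinatorial work to lie.
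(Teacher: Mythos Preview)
Your overall strategy differs from the paper's: the paper argues by contradiction, fixing a stage $s = s_{e,x}$ and assuming there is no $e$-acceptable stage after $s$, then producing a single sufficiently large $y$ with $s_{e,y}$ $e$-acceptable. The point of the contradiction hypothesis is that it freezes the situation: once no further $e$-acceptable stage occurs, no type~2 follower defined after stage $s$ is ever intersected with $A_{t_e}$, and this is exactly what makes the verification of the second and third bullets tractable for large~$y$. Your direct approach must instead cope with the possibility that intermediate stages \emph{are} $e$-acceptable and keep intersecting followers with $A_{t_e}$, which is precisely the bookkeeping you flag as unfinished.

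There is also a concrete gap in your freshness arguments. You assert that a follower $p_{e,n}$ for a string $\tau$ with $\sigma_{x-1} \preceq \tau \prec \sigma$ is ``introduced fresh at some stage strictly greater than $s_{e,x-1}$,'' and similarly for $\tau(k^\tau_{e,x})$. This does not follow. The follower $p_{e,n}$ is defined at step~2 of some stage $t$ at which $\tau$ is bounded, but $\tau$ may equal $\sigma_{x-1}$, and $\sigma_{x-1}$ may well be bounded at stages earlier than $s_{e,x-1}$ (viability only guarantees it is bounded \emph{by} $s_{e,x-1}$, not that $s_{e,x-1}$ is the first such stage). So $p_{e,n}$ could have been defined long before $s_{e,x-1}$, and the numerical bound $p_{e,n} > s_{e,x-1}$ need not hold. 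The same issue undermines your argument for the freshness clause in the third bullet: you cannot conclude that $\tau(k^\tau_{e,x})$ exceeds $s_{e,x-1}$ merely from the fact that it is a follower for a string extending $\tau_{x-1}$. The paper sidesteps this entirely: under the contradiction hypothesis it only needs followers defined after the fixed stage $s$, and it obtains these by iterating the observation that $\sigma(k^\sigma_{e,y+1})$ is always a follower defined strictly later in the construction than $\tau(k^\tau_{e,y})$ for the relevant $\tau \prec \sigma$, then taking $y$ large enough.
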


\begin{proof} 
Fix any stage $s = s_{e,x}$ for $x > e$, and assume
there is not any  $e$-acceptable stage greater than~$s$.  For each
$\sigma$ viable for $e$ at stage $s$, let $i_{\sigma}$ be the largest
$i$ satisfying the statement of the preceding claim.  Then
$\sigma(i_\sigma)$ is a type 2 follower, so by our assumption,
$A_{\sigma(i_\sigma)}$ is never intersected with $A_{t_e}$ during the
course of the rest of the construction.

Now for each $y \geq x$ and each $\sigma$ viable for $e$ at stage
$s_{e,y+1}$, $k^\sigma_{e,y+1}$ is defined and
$\sigma(k^\sigma_{e,y+1})$ is a follower $p_{e,n}$ for some string
extending a $\tau \prec \sigma$ viable for $e$ at stage
$s_{e,y}$.  Since followers are
always defined to be fresh numbers, if $k^\tau_{e,y}$ is defined then
$\sigma(k^\tau_{e,y}) = p_{e,m}$ for some $p_{e,m}$ defined strictly
before $p_{e,n}$ in the construction.

Thus, for any sufficiently large $y > x$, it must be that for each
$\sigma$ viable at stage $s_{e,y}$, $\sigma(k^\sigma_{e,y}) \neq
\tau(k)$ for all $\tau$ viable at stage $s$ and all $k \leq j_\tau$.
Moreover, since $A_{\tau(i_\tau)} \cap A_{t_e} = \emp$ and
$\sigma(k^\sigma_{e,y})$ is a follower for some extension of some such
$\tau$, it must be that $\sigma(k^\sigma_{e,y})$ is a type 2 follower.
Hence, $A_{\sigma(k^\sigma_{e,y})}$ can only be intersected with
$A_{t_e}$ at step 4 of substage $e$ of an $e$-acceptable stage,
meaning at a stage at or before~$s$.  It follows that if $y$ is
additionally chosen large enough that, for each $\sigma$ viable at
stage $s_{e,y}$, the follower $\sigma(k^\sigma_{e,y})$ is not defined
before stage $s$, then $A_{\sigma(k^\sigma_{e,y})}$ will be disjoint
from $A_{t_e}$.  But then in particular,
$A_{\sigma(k^\sigma_{e,y})}[s_{e,y}] \cap A_{t_e}[s_{e,y}] = \emp$, so
$s_{e,y}$ is an $e$-acceptable stage greater than~$s$.  This is a
contradiction, so the claim is proved.
\end{proof}

We can now complete the proof.  First note that $A_{t_e} \notin B$,
for otherwise there would have to be an $x$ and an $i < |\sigma_x|$
such that $\sigma_x(i) = t_e$.  But then $\sigma_x$ would be viable
for $e$ at stage $s = s_{e,x}$, and so is in particular it would be
bounded by $s$, meaning $A_{\sigma_x(j)}[s]$ would have to intersect
$A_{\sigma_x(i)}[s] = A_{t_e}[s]$ for all $j < |\sigma_x|$.  This
would contradict Claim~\ref{C:fiphyp1}.  Now consider any
$e$-acceptable stage $s = s_{e,x}$.  By construction, there is an $i
< |\sigma_x|$ such that $A_{\sigma_x(i)}$ is disjoint from $A_{t_e}$
at the beginning of stage $s$, and each $A_{\sigma_x(j)}$ for $j \leq
i$ is made to intersect $A_{t_e}$ by the end of stage~$s$.  Since, by
Claim~\ref{C:fiphyp2}, there are infinitely many $e$-acceptable
stages, and since $J = \bigcup_x \sigma_x$, it follows that $A_{J(i)}$
intersects $A_{t_e}$ for all~$i$.  In other words, $B_i$ intersects
$A_{t_e}$ for all $i$, which contradicts the choice of $B$ as a
maximal subfamily of $A$ with the $\overline{D}_2$ intersection
property.
\end{proof}

\begin{rem}\label{R:opt_thm_formal}
Examination of the above proof shows that it can be  formalized in
$\RCAo$, because the construction is computable and the verification
that the function $f$ defined in it is total requires only $\Sigma^0_1$
induction.  (See \cite[Definition VII.1.4]{Simpson-2009} for
the formalizations of Turing reducibility and equivalence in $\RCA$.)
\end{rem}

\noindent As discussed above, this has as a consequence the following corollary.

\begin{cor}\label{C:FIP_no_WKL} 
The principle $\overline{D}_2\IP$ is not provable in $\WKL$.
\end{cor}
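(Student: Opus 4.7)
The plan is to deduce the corollary from Theorem~\ref{T:FIP_to_OPT} by constructing an $\om$-model of $\WKL$ in which $\overline{D}_2\IP$ fails. By Theorem~\ref{T:FIP_to_OPT}, there is a computable nontrivial family $A = \langle A_i : i \in \om \rangle$ such that any maximal subfamily of $A$ with the $\overline{D}_2$ intersection property has hyperimmune Turing degree. If I can exhibit an $\om$-model of $\WKL$ consisting entirely of sets of hyperimmune-free degree, then $A$ will lie in the model (being computable), but no maximal subfamily with the $\overline{D}_2$ intersection property can, so the model witnesses that $\WKL \not\vdash \overline{D}_2\IP$.

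To construct the model, I would iterate and dovetail the hyperimmune-free basis theorem of Jockusch and Soare, as mentioned in the discussion preceding Theorem~\ref{T:FIP_to_OPT}. Starting from the empty set, at each stage list a countable sequence of infinite subtrees of $2^{<\om}$ computable from the sets constructed so far, and apply the relativized hyperimmune-free basis theorem to produce, for each such tree, an infinite path whose degree is hyperimmune-free relative to everything previously added. Closing under Turing reducibility and join at the end gives a countable collection $\mathcal{S} \subseteq \mathcal{P}(\om)$ that is closed under $\leq_T$ and $\oplus$, contains a path through every infinite subtree of $2^{<\om}$ coded in $\mathcal{S}$, and in which every set has hyperimmune-free degree. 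This $\mathcal{S}$ is then an $\om$-model of $\RCA + \WKL$.

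Applying Theorem~\ref{T:FIP_to_OPT}, the computable family $A$ belongs to $\mathcal{S}$, but any $B \in \mathcal{S}$ that were a maximal subfamily of $A$ with the $\overline{D}_2$ intersection property would have to be of hyperimmune degree, contradicting the construction of $\mathcal{S}$. Hence $\mathcal{S} \not\models \overline{D}_2\IP$, so $\overline{D}_2\IP$ is not provable in $\WKL$. The main subtlety here is merely the routine verification that the dovetailing can be arranged so that hyperimmune-freeness is preserved relative to all sets previously placed into the model, which is standard; the nontrivial content is entirely contained in Theorem~\ref{T:FIP_to_OPT}, which has already been established.
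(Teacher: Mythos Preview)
Your approach is essentially the same as the paper's: build an $\omega$-model of $\WKLo$ consisting entirely of sets of hyperimmune-free degree (via iterated application of the Jockusch--Soare hyperimmune-free basis theorem), observe that the computable family $A$ from Theorem~\ref{T:FIP_to_OPT} lies in the model, and conclude that no maximal $\overline{D}_2$-subfamily of $A$ can lie in the model.

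There is one subtlety you gloss over that the paper handles explicitly. To show $\mathcal{S} \not\models \overline{D}_2\IP$, you must rule out any $B \in \mathcal{S}$ that is a maximal subfamily \emph{in the sense of the model}; but Theorem~\ref{T:FIP_to_OPT}, applied externally, speaks only about genuinely (externally) maximal subfamilies. The paper closes this gap by invoking Remark~\ref{R:opt_thm_formal}: the proof of Theorem~\ref{T:FIP_to_OPT} formalizes in $\RCAo$, so $\mathcal{S}$ itself proves that any such $B$ has hyperimmune degree, and then absoluteness of this arithmetical property to $\omega$-models yields the contradiction. Your external application also works, but it requires the (easy, unstated) observation that maximality for the $\overline{D}_2$ property is absolute between $\mathcal{S}$ and the real world---because any failure of external maximality is witnessed by adjoining a single $A_k$ to $B$, and such a one-element extension is computable from $B \oplus A$ and hence lies in $\mathcal{S}$. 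You should make this point explicit.
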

\begin{proof}
  Let $\M$ be an $\omega$-model of $\WKLo$ such that every
  set in $\M$ is of hyperimmune-free degree.  Let $A$ be the
  family constructed by the formalized version of
  Theorem~\ref{T:FIP_to_OPT}, noting that $A$ belongs to
  $\mathsf{REC}$ and hence to $\M$.  Suppose $B \in \M$ is a
  maximal subfamily of $A$ with the $\overline{D}_2$
  intersection property.  Then by the preceding remark, $\M
  \models \text{``}B \text{ has hyperimmune degree''}$. Now
  the property of having hyperimmune degree is defined by an
  arithmetical formula, and is thus absolute to
  $\omega$-models. Therefore, $B$ has hyperimmune degree,
  contradicting the construction of~$\M$.
\end{proof}

\subsection{Relationships with other principles}

By the preceding results, $F\IP$ and the principles $D_n\IP$ are of
the irregular variety that do not admit reversals to any of the main
subsystems of $\mathsf{Z}_2$ mentioned in the introduction.  In
particular, they lie strictly between $\RCA$ and $\ACA$, and are
incomparable with $\WKL$. Many principles of this kind have been
studied in the literature, and collectively they form a rich and
complicated structure. Partial summaries are given by Hirschfeldt and
Shore~\cite[p.~199]{HS-2007} and Dzhafarov and Hirst~\cite[p.
150]{DH-2009}. Additional discussion of the princples is given by 
Montalb\'{a}n~\cite[Section 1]{Montalban-TA} and Shore~\cite{Shore-2010}. 
 In this subsection, we investigate where
our intersection principles fit into the known collection of irregular
principles.

We can already show that $F\IP$ does not imply Ramsey's theorem for
pairs ($\RT^2_2$) or any of of the main combinatorial principles
studied by Hirschfeldt and Shore~\cite{HS-2007} (all of which follow
from $\RT^2_2$).  See~\cite[Definition 3.2]{DH-2009} for a concise
list of definitions of the principles in the following corollary.

\begin{cor}
None of the following principles are implied by $F\IP$
over $\RCA$: $\RT^2_2$, $\SRT^2_2$, $\DNR$, $\CAC$, $\ADS$, $\SADS$,
$\COH$.
\end{cor}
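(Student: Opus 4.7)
The plan is to apply Theorem~\ref{P:FIP_conservative}: since $F\IP$ is conservative over $\RCA$ for restricted $\Pi^1_2$ sentences, it cannot imply (over $\RCA$) any restricted $\Pi^1_2$ sentence that is not already provable in $\RCA$. So the work reduces to verifying, for each principle $P$ in the list, that (i) $P$ is (equivalent over $\RCA$ to) a restricted $\Pi^1_2$ sentence, and (ii) $P$ is not provable in $\RCA$.

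For (i), each principle naturally has the form ``for every $X$ satisfying an arithmetical property $\varphi(X)$, there exists $Y$ satisfying a $\Pi^0_2$ (hence $\Sigma^0_3$) property $\psi(X,Y)$.'' For instance, $\RT^2_2$ and $\SRT^2_2$ assert the existence of an infinite homogeneous set for every (stable) $2$-coloring of pairs; $\DNR$ asserts the existence of a function diagonally noncomputable in a given set; $\CAC$, $\ADS$, and $\SADS$ assert the existence of infinite chains or antichains in appropriate partial or linear orders; and $\COH$ asserts the existence of a cohesive set for every sequence of sets. In each case the hypothesis on the input is arithmetical, and the existence of the desired combinatorial object is witnessed by a $\Pi^0_2$ condition (being infinite, together with a homogeneity, monotonicity, or cohesiveness property).

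For (ii), the non-provability in $\RCA$ of each of these principles is well documented in the reverse mathematics literature; see Hirschfeldt and Shore~\cite{HS-2007} and Dzhafarov and Hirst~\cite{DH-2009}. In every case one can exhibit a computable instance with no computable solution, so the principle fails in the $\omega$-model $\mathsf{REC}$ and hence is not provable in $\RCA$.

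Combining (i) and (ii) with Theorem~\ref{P:FIP_conservative} yields the corollary: were $F\IP$ to imply one of these principles over $\RCA$, the conservation theorem would force that principle to be provable in $\RCA$, contradicting (ii). The only step requiring any care is checking the restricted $\Pi^1_2$ form in (i), which is routine once the principles are spelled out.
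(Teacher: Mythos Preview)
Your argument is correct for $\RT^2_2$, $\SRT^2_2$, $\DNR$, $\CAC$, $\ADS$, and $\SADS$, and matches the paper's approach for those principles. The gap is in your treatment of $\COH$. You assert that the cohesiveness property is $\Pi^0_2$, but it is not: ``$C$ is $\vec{R}$-cohesive'' says that for every $i$, either $C \setminus R_i$ is finite or $C \cap R_i$ is finite. Each disjunct is $\Sigma^0_2$, so the condition for fixed $i$ is $\Sigma^0_2$, and quantifying over $i$ makes the whole statement $\Pi^0_3$. Thus the natural matrix $\psi(X,Y)$ in $\COH$ is $\Pi^0_3$, not $\Sigma^0_3$, and $\COH$ is \emph{not} (known to be equivalent to) a restricted $\Pi^1_2$ sentence in the sense of Definition~\ref{D:RP}. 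The conservation result of Theorem~\ref{P:FIP_conservative} therefore does not apply to it.

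The paper handles $\COH$ by a different route: Proposition~\ref{P:FIP_no_ACA} produces an $\omega$-model of $\RCA + F\IP$ consisting entirely of low sets, while any $\omega$-model of $\COH$ must contain a set of $p$-cohesive degree, and such degrees are never low (Jockusch--Stephan). Hence that low model satisfies $F\IP$ but not $\COH$. You should replace your uniform argument for $\COH$ with this model-theoretic one; the remaining six cases can stand as you wrote them.
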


\begin{proof}
  All but the last of these principles are equivalent to restricted
  $\Pi^1_2$ sentences, and so for them the corollary follows by the
  conservation result of Proposition~\ref{P:FIP_conservative}.  For
  $\COH$, it follows by Proposition~\ref{P:FIP_no_ACA} and the fact
  that any $\om$-model of $\COH$ must contain a set of $p$-cohesive
  degree~\cite[p.~27]{CJS-2001}, and such degrees are never
  low~\cite[Theorem~2.1]{JS-1993}.
\end{proof}

Our next results require several basic model-theoretic concepts. 
 We assume some suitable development of model theory in
$\RCA$ (compare \cite[Section II.8]{Simpson-2009}).  Let $T$ be a
countable, complete, consistent theory.
\begin{itemize}
\item A \emph{partial type} of $T$ is a $T$-consistent set of formulas
in a fixed number of free variables.  A \emph{complete type}
 is a $\subseteq$-maximal partial type. 

\item A model $\M$ of $T$ \emph{realizes} a partial type $\Gamma$ if
there is a tuple $\vec{a} \in |\M|$ such that $\M \models
\varphi(\vec{a})$ for every $\varphi \in \Gamma$.  Otherwise, $\M$
\emph{omits}~$\Gamma$.

\item A partial type $\Gamma$ is \emph{principal} if there is a
formula $\varphi$ such that \mbox{$T \vdash \varphi \to \psi$} for every
formula $\psi \in \Gamma$.  A model $\M$ of $T$ is \emph{atomic} if
every partial type realized in $\M$ is principal.

\item An \emph{atom} of $T$ is a formula $\varphi$ such that for every
formula $\psi$ in the same free variables, exactly one of $T \vdash
\varphi \to \psi$ or $T \vdash \varphi \to \lnot \psi$ holds.  $T$ is
\emph{atomic} if for every $T$-consistent formula $\psi$, $T \vdash
\varphi \to \psi$ for some atom~$\varphi$.

\end{itemize} 
A classical result states that a theory is atomic if and
only if it has an atomic model.  Hirschfeldt, Slaman, and Shore
\cite{HSS-2009} studied the strength of this theorem in the following
forms.

\begin{defn}[{\cite[pp.~5808,~5831]{HSS-2009}}]\label{D:AMT} 
The following principles are defined in $\RCA$.

\begin{list}{\labelitemi}{\leftmargin=0em}\itemsep2pt
\item[]($\AMT$) Every complete atomic theory has an atomic model.
\medskip

\item[]($\OPT$) Let $T$ be a complete theory and let $S$ be a set of
partial types of~$T$. Then there is a model of $T$ omitting all the
nonprincipal partial types in~$S$.
\end{list}
\end{defn}

Over $\RCA$, $\AMT$ is strictly implied by $\SADS$ (\cite[Corollary
3.12 and Theorem 4.1]{HSS-2009}).  The latter asserts that every
linear order of type $\om + \om^*$ has a suborder of type $\om$ or
$\om^*$, and is one of the weakest principles studied in
\cite{HS-2007} that does not hold in the $\om$-model $\mathsf{REC}$.
Thus, $\AMT$ is especially weak even among principles lying below
$\RT^2_2$.  It does, however, imply part (2) of the following theorem,
and therefore also $\OPT$ (\cite[Theorem 5.6~(2) and Corollary
5.8]{HSS-2009}).

\begin{thm}[Hirschfeldt, Shore, and Slaman {\cite[Theorem~5.7]{HSS-2009}}]\label{T:HSS_OPT_char} 
The following are equivalent over $\RCA$:
\begin{enumerate}
\item $\OPT$;
\item for every set $X$, there exists a set of degree hyperimmune
relative to~$X$.
\end{enumerate}
\end{thm}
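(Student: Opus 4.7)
The plan is to prove both directions separately, using a Henkin-style model construction in one direction and a specific choice of theory and types in the other.

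For $(2)\Rightarrow(1)$, I would let $T$ be a complete theory and $S$ a set of partial types of $T$, and apply hypothesis~(2) with $X=T\oplus S$ to obtain a set $Y$ whose degree is hyperimmune relative to $T\oplus S$, i.e., $Y\oplus T\oplus S$ computes a function $f$ not dominated by any $(T\oplus S)$-computable function. I would then carry out a Henkin construction relative to $T\oplus S$, introducing fresh constants $c_0,c_1,\ldots$ and building a maximal consistent Henkin extension of $T$ in the expanded language. The crucial extra requirement is that for each nonprincipal $\Gamma\in S$ and each tuple $\vec c$ of Henkin constants we must force a formula $\psi(\vec c)\in\Gamma$ to lie outside our extension. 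Because $\Gamma$ is nonprincipal, for any $(T\oplus S)$-consistent formula $\varphi(\vec x)$ there exists some $\psi\in\Gamma$ with $T\cup\{\varphi\wedge\lnot\psi\}$ consistent, and the least index of such a $\psi$ is given by a $(T\oplus S)$-computable search. The main subtlety is that $\Delta^0_1$ comprehension does not let us run this search directly; instead, I would use $f$ to furnish a bound on when a suitable $\psi$ appears, so that the diagonalization step becomes $\Delta^0_1$ in $f\oplus T\oplus S$. The resulting Henkin term model satisfies $T$ and omits every nonprincipal type in $S$.

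For $(1)\Rightarrow(2)$, fix $X$ and build a complete consistent theory $T\leq_T X$ together with types $S$ encoded in $X$ so that every model omitting the nonprincipal types of $S$ computes, together with $X$, a function escaping domination by every $X$-computable function. I would use a language with a unary predicate $U$ and function symbols $g,h$ (together with enough arithmetic to code sequences), so that in any model the $g$-images of the standard natural numbers give a function $\setN\to\setN$. For each $e$, I would include a partial type $\Gamma_e(x)$ expressing, roughly, that $\Phi_e^X$ is total and $g$ is eventually dominated by $\Phi_e^X$ (using a schema of formulas saying "$g(n)\le\Phi_e^X(n)$ for all $n\ge k$", one for each $k$). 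The theory $T$ is chosen so that $\Gamma_e$ is principal precisely when $\Phi_e^X$ fails to be total, so that every nonprincipal $\Gamma_e$ must be omitted by any model produced by $\OPT$. In any such model $\M$, the function $g^\M$ (restricted to standard integers) is not dominated by any total $\Phi_e^X$, and is computable from $\M$; together with $X$ this yields a set of hyperimmune degree relative to $X$.

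The main obstacle is the $(2)\Rightarrow(1)$ direction: ensuring that the Henkin diagonalization step is bounded by the hyperimmune-escaping function $f$ requires setting up the type-omitting requirements so that the ``time to find a witness'' is a $(T\oplus S)$-computable function of the stage, and then using $f$ escaping domination to guarantee that such witnesses are found in time. The $(1)\Rightarrow(2)$ direction is technically easier but requires care in the construction of $T$ so that each $\Gamma_e$ is genuinely nonprincipal exactly when $\Phi_e^X$ is total, which in turn requires coding enough arithmetic into $T$ to talk about Turing machine computations from $X$.
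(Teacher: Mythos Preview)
The paper does not prove this theorem; it is quoted from Hirschfeldt, Shore, and Slaman \cite[Theorem~5.7]{HSS-2009} and used as a black box (to derive the corollary that $\overline{D}_2\IP$ implies $\OPT$). So there is no proof in the paper to compare your proposal against.

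That said, your outline follows the standard argument, but in the $(2)\Rightarrow(1)$ direction you have glossed over the one genuine difficulty. You say you will ``use $f$ to furnish a bound on when a suitable $\psi$ appears, so that the diagonalization step becomes $\Delta^0_1$ in $f\oplus T\oplus S$.'' The problem is that the formula $\theta_s$ accumulated by stage~$s$ already depends on earlier values of $f$, so the function $s\mapsto(\text{least witness for the $s$th requirement relative to }\theta_s)$ is \emph{not} $(T\oplus S)$-computable, and there is no reason $f$ should escape it. The usual fix is a priority arrangement: at each stage attempt the \emph{least unsatisfied} type-omitting requirement. If some requirement $R_e$ were never satisfied, then past some stage $s_0$ the construction would always be working on $R_e$; since the attempts fail, only the deterministic Henkin and completeness steps change $\theta_s$, so $\langle\theta_s:s\geq s_0\rangle$ becomes $(T\oplus S)$-computable from the finite parameter $\theta_{s_0}$. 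One can then define a genuinely $(T\oplus S)$-computable $g$ with the property that $f(s)\geq g(s)$ at any $s\geq s_0$ would satisfy $R_e$, contradicting non-domination. Without this retry mechanism your argument is circular.

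Your $(1)\Rightarrow(2)$ sketch is along the right lines, though arranging that $\Gamma_e$ is nonprincipal exactly when $\Phi_e^X$ is total, and that the resulting $T$ is complete and computable from $X$, takes more work than the sketch suggests; in particular you should check that each $\Gamma_e$ is a partial type (i.e., $T$-consistent) in all cases.
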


This characterization was used by Hirschfeldt, Shore and
Slaman~\cite[p.~5831]{HSS-2009} to conclude that $\WKL$ does not imply
$\OPT$.  It is of interest to us in light of
Theorem~\ref{T:FIP_to_OPT} above, which links $F\IP$ with hyperimmune
degrees. Specifically, by Remark \ref{R:opt_thm_formal}, we have the following. 

\begin{cor} 
$\overline{D}_2\IP$ implies $\mathsf{OPT}$ over $\RCA$.
\end{cor}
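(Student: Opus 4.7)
The plan is to combine Theorem~\ref{T:FIP_to_OPT} with the characterization of $\OPT$ given in Theorem~\ref{T:HSS_OPT_char}. By that characterization, it suffices to prove in $\RCA + \overline{D}_2\IP$ that for every set $X$, there exists a set of hyperimmune degree relative to~$X$.

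Fix a set~$X$. By Remark~\ref{R:opt_thm_formal}, the construction of Theorem~\ref{T:FIP_to_OPT} can be formalized in $\RCA$, and it relativizes straightforwardly to~$X$: simply replace the use of Turing functionals $\Phi_e$ throughout by $X$-computable functionals $\Phi_e^X$ and the stage numbers $s_{e,x}$ by their $X$-relativized counterparts.  This yields, provably in $\RCA$, an $X$-computable nontrivial family $A = \langle A_i : i \in \om \rangle$ such that any maximal subfamily of $A$ with the $\overline{D}_2$ intersection property computes a function not dominated by any $X$-computable function, and hence has hyperimmune degree relative to~$X$.

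Now apply $\overline{D}_2\IP$ to the family~$A$ to obtain a maximal subfamily $B$ of $A$ with the $\overline{D}_2$ intersection property. By the conclusion of the relativized Theorem~\ref{T:FIP_to_OPT}, $B$ has hyperimmune degree relative to~$X$. Since $X$ was arbitrary, clause~(2) of Theorem~\ref{T:HSS_OPT_char} holds, and therefore $\OPT$ holds. The only point requiring care is that the verification of Theorem~\ref{T:FIP_to_OPT} (in particular the definition of the function $f$ and the inductive proof that $f$ is total, as noted in Remark~\ref{R:opt_thm_formal}) goes through in $\RCA$ relative to~$X$; this follows because the argument uses only $\Sigma^0_1$ induction, and relativizing a $\Sigma^0_1$ formula to~$X$ yields a $\Sigma^{0,X}_1$ formula, to which $\Sigma^0_1$ induction applies with $X$ as a parameter.
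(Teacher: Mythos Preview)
Your proof is correct and follows the same approach as the paper: combine the characterization of $\OPT$ in Theorem~\ref{T:HSS_OPT_char} with (the relativized, formalized form of) Theorem~\ref{T:FIP_to_OPT} via Remark~\ref{R:opt_thm_formal}, then apply $\overline{D}_2\IP$. You are simply more explicit than the paper about the relativization to an arbitrary~$X$, which is indeed needed to match clause~(2) of Theorem~\ref{T:HSS_OPT_char}.
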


The next proposition and theorem provide a partial step towards the converse of this
corollary.

\begin{prop}
Let $A = \langle A_i : i \in \N \rangle$
be a computable nontrivial family of sets.
Every set $D$ of degree hyperimmune relative to $\0'$ computes
a maximal subfamily of $A$ with the $F$ intersection property.
\end{prop}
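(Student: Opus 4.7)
Fix $f \leq_T D$ not dominated by any total $\0'$-computable function; without loss of generality, assume $f$ is strictly increasing. The idea is to run an approximate greedy construction, using $f$ as a time bound. At each stage $s$, define a finite set $I^s \subseteq [0,s)$ by recursion on $n < s$: put $n \in I^s$ iff for every $S \subseteq I^s \cap [0,n)$ there is some $x \leq f(s)$ with $x \in A_n \cap \bigcap_{i \in S} A_i$. Each $I^s$ is uniformly computable from $f$, and hence from $D$. Let $I = \lim_s I^s$, and set $B = \langle A_{p(n)} : n \in \N \rangle$, where $p$ enumerates $I$ in increasing order (repeating the largest element if $I$ happens to be finite, which by Definition~\ref{D:family} still yields a valid subfamily, using nontriviality of $A$ to ensure $I \neq \emptyset$).

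I would first verify that the pointwise limit $I = \lim_s I^s$ exists and satisfies the greedy recurrence $n \in I$ iff for every $S \subseteq I \cap [0,n)$, $A_n \cap \bigcap_{i \in S} A_i \neq \emptyset$. This is by induction on $n$: once $I^s \cap [0,n)$ has stabilized to $I \cap [0,n)$, each nonempty intersection $A_n \cap \bigcap_{i \in S} A_i$ (for $S \subseteq I \cap [0,n)$) has a fixed least witness, and since $f \to \infty$, the value $I^s(n)$ stabilizes to the correct greedy value. From the greedy recurrence it then follows routinely that $B$ has the $F$ intersection property and is $\subseteq$-maximal: for every $j \notin I$, some finite $S \subseteq I$ must block $A_j$, for otherwise $j$ would have been included in $I^s$ at all sufficiently large $s$.

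The main obstacle is showing that $I$ is computable from $D$ alone, rather than from $D \oplus \0'$. The key point is that the stabilization modulus $\mu(n)$ is $\0'$-computable: once $I \cap [0,n)$ is known, $\mu(n)$ depends only on $\mu(n-1)$ together with finitely many least witnesses, and $\0'$ can both decide which of the relevant intersections are nonempty and, for those that are, compute their least witnesses. The hyperimmunity of $f$ relative to $\0'$ is what allows $D$ to substitute for $\0'$ in this computation: since $f$ is not dominated by any $\0'$-computable function, a priority-style scheduling lets us commit, using only $f$, to longer and longer correct initial segments of $I$ at the infinitely many stages where $f$ exceeds the current $\0'$-computable modulus bound. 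The detailed verification parallels the use of hyperimmunity in Hirschfeldt, Shore, and Slaman's proof of Theorem~5.7 of~\cite{HSS-2009}.
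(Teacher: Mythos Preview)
Your limit $I = \lim_s I^s$ is precisely the canonical greedy set for $A$: the unique set satisfying $n \in I \Leftrightarrow A_n \cap \bigcap_{m \in I \cap [0,n)} A_m \neq \emptyset$. This set depends only on $A$, not on $f$ or $D$, so your final claim that $I \leq_T D$ would force the greedy set to lie below \emph{every} degree hyperimmune relative to~$\emptyset'$. That fails in general. For a concrete counterexample, let $A_0 = \omega$ and, for $n \geq 1$, let $A_n = \{s : n-1 \in K_s\}$; this is a computable nontrivial family, and a short induction gives $I = \{0\} \cup \{n+1 : n \in K\}$, so $I \equiv_T \emptyset'$. Meanwhile, any $2$-generic $G$ is $1$-generic relative to $\emptyset'$ and hence of hyperimmune degree relative to $\emptyset'$, yet $G$ computes no noncomputable c.e.\ set and so $G \not\geq_T \emptyset'$. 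For this $A$ and $D = G$, your $I$ is not $D$-computable.

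The sketch in your last paragraph cannot be completed into a proof that $I \leq_T D$: knowing that $f$ escapes a $\emptyset'$-computable bound at infinitely many stages does not let you \emph{recognize}, from $f$ alone, which stages those are. (Your modulus $\mu$ is also defined through the $I^s$, which depend on $f$; it is computable from $f \oplus \emptyset'$, not from $\emptyset'$ alone as you assert.) The paper sidesteps the limit entirely by building a subfamily that \emph{depends on~$f$}. It defines $J \leq_T f$ by a single forward recursion: $J(0) = 0$, and $J(s+1)$ is the least $i \leq s$ not yet in the range of $J$ for which some $x \leq f(s)$ lies in $A_i \cap \bigcap_{j \leq s} A_{J(j)}$, or $0$ if there is no such~$i$. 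Maximality is then argued by finding, for each candidate omitted index, a \emph{single} stage $s$ at which $f(s)$ exceeds the paper's $\emptyset'$-computable witness bound $g(s)$; no identification of all such stages is needed.
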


\begin{proof}
We may assume that $A$ has no
finite maximal subfamily with the $F$ intersection property.  And by
deleting some of the members of $A$ if necessary, we may further
assume that $A_0 \neq \emp$.  Define a $\emp'$-computable function $g \colon
\N \to \N$ by letting $g(s)$ be the least $y$ such that for all finite
sets $F \subseteq \{0,\ldots,s\}$,

\[
\bigcap_{j \in F} A_j \neq \emp \Rightarrow (\exists x \leq y)[x \in
\bigcap_{j \in F} A_j].
\]
Since $D$ has hyperimmune degree relative to $\0'$, we may fix a
function $f \leq_T D$ not dominated by any $\emp'$-computable
function.  In particular, $f$ is not dominated by~$g$.

Now define $J \in \om^\om$ as follows.  Let $J(0) = 0$, and suppose
inductively that we have defined $J(s)$ for some $s \geq 0$.  Search
for the least $i \leq s$ not yet in the range of $J$ for which there
exists an $x \leq f(s)$ with
\[
x \in A_i \cap \bigcap_{j \leq s} A_{J(j)}.
\]
If it exists, set $J(s+1) = i$, and otherwise, set $J(s+1) = 0$

Clearly, $J \leq_T f$.  Moreover, $\bigcap_{i \leq s} A_{J(i)} \neq
\emp$ for every $s$, so the subfamily defined by $J$ has the $F$
intersection property.  We claim that for all $i$, if $A_i \cap
\bigcap_{j \leq s} A_{J(j)} \neq \emp$ for every $s$ then $i$ is in
the range of~$J$.  Suppose not, and let $i$ be the least witness to
this fact.  Since $f$ is not dominated by $g$, there exists an $s \geq
i$ such that $f(s) \geq g(s)$ and for all $t \geq s$, $J(t) \neq j$
for any $j < i$.  By construction, $J(j) \leq j$ for all $j$, so the
set $F = \{i\} \cup \{J(j) : j \leq s\}$ is contained in
$\{0,\ldots,s\}$.  Consequently, there necessarily exists some $x \leq
g(s)$ with $x \in A_i \cap \bigcap_{j \leq s} A_{J(j)}$.  But then $x
\leq f(s)$, so $J(s+1)$ is defined to be $i$, which is a
contradiction.  We conclude that $\langle A_{J(i)} : i \in \om
\rangle$ is maximal, as desired.
\end{proof}

\begin{thm}\label{T:permit} 
Let $A = \langle A_i : i \in \N \rangle$
be a computable nontrivial family of sets.
Every noncomputable computably enumerable set $W$ computes
a maximal subfamily of $A$ with the $F$ intersection property.
\end{thm}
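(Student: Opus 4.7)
Our strategy is to adapt the construction from the preceding proposition using a c.e.\ permitting argument. As before, we may assume that $A$ has no finite maximal subfamily with the F intersection property, since otherwise such a subfamily can be found computably from $A$. Fix a computable enumeration $\{W_s\}$ of $W$ with $W_s \subseteq [0,s]$.

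We build $J \leq_T W$ in stages. Set $J(0) = 0$ (assuming without loss of generality that $A_0 \neq \emp$). At stage $s+1$, having defined $J(0), \ldots, J(s)$, we search for the least $i \leq s$ with $i \notin \{J(0), \ldots, J(s)\}$ and a witness $x$ of $A_i \cap \bigcap_{j \leq s} A_{J(j)}$, subject to a $W$-permission condition: we commit to $J(s+1) = i$ only when $W$ enumerates a new element below a use associated to the pair $(i,s)$; otherwise we set $J(s+1) = 0$. This construction yields $J \leq_T W$, since both the search and the permission check are effective in the computable enumeration of $W$, and the F intersection property is preserved because we commit only when a witness is present.

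The main obstacle is verifying maximality. Unlike the preceding proposition, where a function $f$ escaped the $\mathbf{0}'$-computable bound $g$, here every $W$-computable function is $\mathbf{0}'$-computable and so does not escape $\mathbf{0}'$-computable bounds in general. Instead, one exploits the c.e.\ structure of $W$: the modulus $m_W(s) = \min\{t : W_t \cap [0,s] = W \cap [0,s]\}$ is $W$-computable and, because $W$ is noncomputable, is not dominated by any computable function. For the least index $k^*$ that is compatible with the final $J$ but fails to lie in $\operatorname{range}(J)$, one argues by minimality of $k^*$ that at all sufficiently large stages $k^*$ is the least viable candidate identified by the search; the hyperimmunity of $W$ then forces an enumeration below the associated use at some such stage, causing $k^*$ to be added and yielding the desired contradiction.
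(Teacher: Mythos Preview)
Your sketch correctly identifies permitting as the mechanism, but it leaves a genuine gap at the heart of the verification. Two concrete issues:

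\medskip

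\textbf{The witness bound and the ``least viable candidate'' claim.} For the construction to be computable in the enumeration of $W$, the search for a witness $x \in A_i \cap \bigcap_{j \leq s} A_{J(j)}$ must be bounded (say by $s$, or by some other computable function). Once you impose such a bound, your claim that ``at all sufficiently large stages $k^*$ is the least viable candidate'' no longer follows. As $J$ accumulates indices larger than $k^*$, the least element of $A_{k^*} \cap \bigcap_{j \leq s} A_{J(j)}$ can grow; nothing prevents it from outrunning the bound at every stage. In the preceding proposition this was handled because the bound $f$ escaped the $\emptyset'$-computable function $g$ controlling where witnesses appear; here, as you yourself note, no $W$-computable function will escape $g$, so that mechanism is unavailable.

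\medskip

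\textbf{The missing removal mechanism.} The paper's construction is not a direct definition of $J$; it builds a \emph{limit-computable} set $M$ of ``copies'' $\langle i,n\rangle$, where $n$ serves as the permission use. Crucially, when a smaller index $i$ receives permission, all copies of larger indices are \emph{removed} from $M$, and a fresh (large) copy of $i$ is inserted. This reset guarantees two things: first, the witness $i$ needs is only against the stable elements below it; second, the least copy $\langle j_t, n_t\rangle$ above $\ell(i,t)$ forms a computable, nondecreasing, unbounded sequence such that $W$ is settled below $\langle j_t, n_t\rangle$ by stage $t$ whenever $i$ is never added. That is what yields the computation of $W$ and the contradiction. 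Your one-pass construction of $J$, with no removal, lacks both features: indices larger than $k^*$ that enter $J$ early permanently constrain the witnesses for $k^*$, and there is no monotone computable sequence of uses below which $W$ is provably settled. The modulus $m_W$ alone does not supply this, because knowing $W$ does not change below $u(s)$ \emph{at stage $s+1$} is far weaker than knowing $W$ is \emph{settled} below $u(s)$ by stage $s$; the latter is what you need, and it requires the monotonicity of uses that the copy-number device provides.
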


\begin{proof}

As above, assume that $A$ has no
finite maximal subfamily with the $F$ intersection property, and that
 $A_0 \neq \emp$.  Fix a computable enumeration of $W$,
denoting by $W[s]$ the set of elements enumerated into $W$ by the end
of stage~$s$.  We construct a limit computable set $M$ by the method
of permitting, denoting by $M[s]$ the approximation to it at stage $s$ of the
construction.  For each $i$ and each $n$, call $\langle i,n \rangle$ a
\emph{copy} of~$i$.

\medskip
\noindent {\em Construction.}

\medskip
\noindent {\em Stage~$0$.}  Enumerate $\langle 0, 0 \rangle$ into
$M[0]$.

\medskip
\noindent {\em Stage $s+1$.}  Assume that $M[s]$ has been defined,
that it is finite and contains $\langle 0, 0 \rangle$, and that each
$i$ has at most one copy in $M[s]$.  For each $i$ with no copy in
$M[s]$, let $\ell(i,s)$ be the greatest $k$ with a copy in $M[s]$, if
it exists, such that there is an $x \leq s$ that belongs to $A_i$ and
to $A_j$ for every $j \leq k$ with a copy in $M[s]$.

Now consider all $i \leq s$ such that
\begin{itemize}
\item $\ell(i,s)$ is defined;

\item there is no $j$ with a copy in $M[s]$ such that $\ell(i,s) < j <
i$;

\item for each $\langle j,n \rangle \in M[s]$, if $\ell(i,s) < j$ then
$W [s] \res \langle j,n \rangle \neq W[s+1] \res \langle j,n \rangle$.

\end{itemize}
If there is no such $i$, let $M[s+1] = M[s]$.
Otherwise, fix the least such $i$, and let $M[s+1]$ be the result of
removing from $M[s]$ all $\langle j,n \rangle > \ell(i,s)$, and then
enumerating into it the least copy of $i$ greater than every element
of $M[s]$ and $W[s+1] - W[s]$.

\medskip
\noindent {\em End construction.}

\medskip For every $m$, if $M[s](m) \neq M[s+1](m)$ then $W[s] \res m
\neq W[s+1] \res m$.  Therefore, $M(m) = \lim_s M[s](m)$ exists for
all $m$ and is computable from~$W$.  Furthermore, note that
$\bigcap_{\langle i,n \rangle \in M[s]} A_i \neq \emp$ for all~$s$.
Thus, if $F$ is any finite subset of $M$, then $\bigcap_{\langle i, n
\rangle \in F} A_i \neq \emp$ since $F$ is necessarily a subset of
$M[s]$ for some~$s$.  If we now let $J : \om \to \om$ be any
$W$-computable function with range equal to $\{ i : (\exists
n)[\langle i,n \rangle \in M]\}$, it follows that $\langle A_{J(i)} :
i \in \om \rangle$ has the $F$ intersection property.

We claim that this subfamily is also maximal.  Seeking a
contradiction, suppose not, and let $i$ be the least witness to this
fact.  So $A_i \cap \bigcap_{\langle j,n\rangle \in F} A_j \neq \emp$
for every finite subset $F$ of $M$, and no copy of $i$ belongs to~$M$.
By construction, $\langle 0,0\rangle \in M[s]$ for all $s$ and hence
also to $M$, so it must be that $i > 0$.  Let $i_0, \ldots, i_r$ be
the numbers less than $i$ that have copies in $M$, and let these copies be
$\langle i_0,n_0 \rangle, \ldots, \langle i_r, n_r \rangle$,
respectively.  Let $s$ be large enough so that
\begin{itemize}
\item there is an $x \leq s$ with $x \in A_i \cap \bigcap_{j \leq n}
A_{i_j}$;

\item for all $t \geq s$ and all $j \leq n$, $\langle i_j, n_j \rangle
\in M[t]$.

\end{itemize} 
Now for all $t \geq s$, $\ell(i,t)$ is defined, and its
value must tend to infinity.

Note that no copy of $i$ can be in $M[t]$ at any stage $t \geq s$.
Otherwise, it would have to be removed at some later stage, which
could only be done for the sake of enumerating a copy of some number
$< i$.  This, in turn, could not be a copy of any of $i_0,\ldots,i_r$
by choice of $s$, and so it too would subsequently have to be removed.
Continuing in this way would result in an infinite regress, which is
impossible.

It follows that for each $t \geq s$ there is some $j > \ell(i,t)$ with
a copy $\langle j,n \rangle$ in $M[t]$.  Let $\langle j_t,n_t \rangle$
be the least such copy at stage~$t$.  Then $\langle j_t,n_t \rangle
\leq \langle j_{t+1},n_{t+1} \rangle$ for all $t$, since no $m <
\langle j_t,n_t \rangle$ can be put into $M[t+1]$.  Furthermore, for
infinitely many $t$ this inequality must be strict, since infinitely
often $\ell(i,t+1) \geq j_t$.

Now fix any $t \geq s$ so that $\ell(i,u) \geq i$ for all $u \geq t$.
Then for all $u \geq t$, $W [u] \res \langle j_t,n_t \rangle$ must be
equal to $W[u+1] \res \langle j_t,n_t \rangle$.  If not, we would
necessarily have $W [u] \res \langle j_u,n_u \rangle \neq W[u+1] \res
\langle j_u,n_u \rangle$, and hence $W [u] \res \langle j,n \rangle
\neq W[u+1] \res \langle j,n \rangle$ for every $\langle j,n \rangle
\in M[u]$ with $j > \ell(i,u)$.  But then some copy of $i$ would be
enumerated into $M[u+1]$, which cannot happen.  We conclude that for
all $u \geq t$, $W [u] \res \langle j_u,n_u \rangle = W \res \langle
j_u,n_u \rangle$.  Thus, given any $n$, we can compute $W \res n$
simply by searching for a $u \geq t$ with $\langle j_u, n_u \rangle
\geq x$.  This contradicts the assumption that $W$ is noncomputable.
The proof is complete.
\end{proof}

The above is of special
interest.  Heuristically, one would expect to be able to adapt a
permitting argument into one showing the same result but with ``every
noncomputable computably enumerable set'' replaced by ``every
hyperimmune set''.  For example, the proof in \cite{HSS-2009} that
$\OPT$ is implied over $\RCA$ by the existence of a set of hyperimmune
degree is an adaptation of a permitting argument of Csima~\cite[Theorem 1.2]{Csima-2004}.
The basic idea is to translate
receiving permissions into escaping domination by computable
functions.  We take a given function $f$ not dominated by any
computable one, and for each $i$ define a a computable function $g_i$
so that receiving permission for the $i$th requirement in the
permitting argument (such as putting a copy of $i$ into $M$)
corresponds to having $f(s) \geq g_i(s)$ for some~$s$.  But if we try
to do this in the case of Theorem~\ref{T:permit}, we run into the
problem of seemingly needing to know $f$ in order to define~$g$.
Intuitively, we are trying to put $A_i$ into our subfamily at stage $s$, and are
letting $g_i(s)$ be so large that it bounds a witness to the
intersection of $A_i$ and all the members of $A$ put in so far.  Thus,
the definition of $g_{i}(s)$ depends on which $A_j$ have been put in
at a stage $t < s$, i.e., on which $j$ had $f(t) > g_j(t)$ for some $t
< s$.  In the permitting argument this information is computable,
but here it is not.  We do not know of a way of get past
this difficulty, and thus leave open the question of whether $\OPT$
reverses to $F\IP$ (or $\overline{D}_2\IP$) over $\RCA$.

We also do not know whether the weaker implication from $\AMT$ to
$F\IP$ is provable in $\RCA$. However, the next proposition shows that
it is provable in $\RCA$ together with additional induction axioms. 
In particular, every $\om$-model of $\AMT$ is also a model of $F\IP$.
Thus we have a firm connection between the model-theoretic principles 
$\AMT$ and $\OPT$ and the set-theoretic principles $F\IP$ and $\overline{D}_n\IP$.

\begin{defn}[{Hirschfeldt and Shore~\cite[p.~5823]{HS-2007}}]\label{D:Pi01G} 
The following principle is defined in $\RCA$.
 
\begin{list}{\labelitemi}{\leftmargin=0em}\itemsep2pt
\item[]($\Pi^0_1\mathsf{G}$) For any uniformly $\Pi^0_1$ collection of
sets $D_i$, each of which is dense in $2^{<\N}$, there exists a set $G$
such that for every $i$, $G \res s \in D_i$ for some~$s$.
\end{list}
\end{defn}

\noindent Hirschfeldt, Shore and Slaman~\cite[Theorem~4.3, Corollary~4.5, and p.~5826]{HSS-2009} proved that $\Pi^0_1\mathsf{G}$ strictly
implies $\AMT$ over $\RCA$, and that $\AMT$ implies
$\Pi^0_1\mathsf{G}$ over $\RCA + \mathsf{I}\Sigma^0_2$.  As discussed
in the previous subsection, $\RCAo + Pi^0_1G$ is conservative over $\RCAo$
for restricted $\Pi^1_2$ sentences, and thus it does not imply
$\WKL$ over $\RCAo$.

\begin{prop}\label{P:Gen_to_FIP} 
$\Pi^0_1\mathsf{G}$ implies $F\IP$ over $\RCA$.
\end{prop}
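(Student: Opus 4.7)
The plan is to apply $\Pi^0_1\mathsf{G}$ to a uniformly $\Pi^0_1$ sequence of dense subsets $\langle D_e : e \in \om \rangle$ of $\tre$, designed so that any $G \in 2^\om$ meeting every $D_e$ encodes, via an effective decoding, a maximal subfamily of a given nontrivial $A = \langle A_i : i \in \om \rangle$ with the $F$ intersection property.

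First I would fix a computable coding of elements of $\tre$ as finite sequences of \emph{actions} of three kinds: $(\mathrm{inc},e,w)$ (``include $A_e$, witnessed by $w$''), $(\mathrm{exc},e,F)$ (``exclude $A_e$, obstructed by the finite set $F$''), and a no-op. For $\tau \in \tre$ and $i \leq |\tau|$, define $I_\tau^{<i}$ by bounded recursion as the set of indices $e$ such that some step $j<i$ of $\tau$ carries a \emph{valid} include $(\mathrm{inc},e,w)$, meaning $w \in A_e \cap \bigcap_{e' \in I_\tau^{<j}} A_{e'}$; this is a $\Delta^0_0$ condition. Call $(\mathrm{exc},e,F)$ at step $i$ \emph{valid} if $F \subseteq I_\tau^{<i}$ and $A_e \cap \bigcap_{e' \in F} A_{e'} = \emp$; the second clause is $\Pi^0_1$. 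Let
\[
D_e = \{\tau \in \tre : \text{some } i < |\tau| \text{ carries a valid action concerning } e\}.
\]
Since a bounded disjunction of $\Pi^0_1$ conditions is $\Pi^0_1$, each $D_e$ is $\Pi^0_1$ uniformly in $e$. For density at an arbitrary $\tau$, form $I_\tau = I_\tau^{<|\tau|}$: if $A_e \cap \bigcap_{e' \in I_\tau} A_{e'} \neq \emp$ then extend $\tau$ by an include whose witness lies in this intersection; otherwise $I_\tau$ is itself a valid obstruction, so extend by $(\mathrm{exc},e,I_\tau)$. In either case the extension lies in $D_e$.

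Applying $\Pi^0_1\mathsf{G}$ then produces a set $G$ meeting each $D_e$. From $G$, let $B$ be the subfamily enumerating $A_e$ for each $e$ validly included in $G$, padding with the last such set if the list is finite; $B$ exists by $\Delta^0_1$ comprehension relative to $G$. For the $F$ intersection property, given distinct $A_{e_1},\ldots,A_{e_k} \in B$ with include steps $i_1,\ldots,i_k$ in $G$, let $i^* = \max_j i_j$ and let $w^*$ and $e^*$ be the witness and index of the include action at step $i^*$: by validity, $w^* \in A_{e^*}$ and $w^* \in A_{e'}$ for every $e' \in I_G^{<i^*}$, and since each $e_j$ with $i_j < i^*$ lies in $I_G^{<i^*}$, it follows that $w^* \in \bigcap_{j \leq k} A_{e_j}$. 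For maximality, if $A_e \notin B$ then no valid include for $e$ occurs in $G$, so $D_e$ is met via a valid exclude, producing a finite $F$ of indices already in $B$ with $A_e \cap \bigcap_{e' \in F} A_{e'} = \emp$, so adjoining $A_e$ to $B$ destroys the $F$ intersection property.

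The main obstacle is keeping each $D_e$ simultaneously $\Pi^0_1$ and dense at every $\tau \in \tre$. The naive approach, in which $G$ is the characteristic function of the subfamily and $D_e$ merely records that $G(e)$ has been consistently decided, forces the $\Sigma^0_1$ side-condition ``the partial intersection is nonempty,'' and restricting to its $\Pi^0_1$ negation destroys density on any $\tau$ that has committed to a prefix already inconsistent with the requirement. Packaging each include with an explicit witness (making its consistency $\Delta^0_0$) and each exclude with an explicit finite obstruction (making its validity $\Pi^0_1$), and exploiting the dichotomy that either $A_e$ is still compatible with $I_\tau$ or $I_\tau$ itself obstructs it, restores density at every $\tau$ while keeping the sets $\Pi^0_1$.
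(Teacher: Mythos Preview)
Your proposal is correct and follows essentially the same strategy as the paper: both apply $\Pi^0_1\mathsf{G}$ to uniformly $\Pi^0_1$ dense sets indexed by the members of $A$, with the generic $G$ encoding a growing list of indices together with explicit intersection witnesses, and density established via the dichotomy that $A_e$ is either compatible with the current list (yielding a witnessed include) or obstructed by it (yielding a $\Pi^0_1$ exclusion). The only difference is packaging: the paper extracts the included indices via a ``good sequence'' mechanism and folds the exclusion condition directly into the definition of $D_i$, whereas you record includes and excludes as explicit actions in the string; your encoding is arguably more transparent, but the underlying argument is the same.
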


\begin{proof} 
We argue in $\RCA$.  Let a nontrivial family $A =
\langle A_i : i \in \N \rangle$ be given.  We may assume $A$ has no
finite maximal subfamily with the $F$ intersection property.  Fix a
bijection $c : \N \to \N^{<\N}$.  Given $\sigma \in 2^{<\N}$, we say
that a number $x < |\sigma|$ is \textit{good for} $\sigma$ if
\begin{itemize}

\item $\sigma(x) = 1$;

\item $c(x) = \tau b$, which we call the {\em witness} of $x$, where

\begin{itemize}
  \item $\tau \in \N^{<\N}$,

  \item $b \in \N$,

  \item and there is a $y \leq b$ with $y \in \bigcap_{i < |\tau|}
  A_{\tau(i)}$.

\end{itemize}
\end{itemize} 
We define the \emph{{good} sequence} of $\sigma$ to be
either the empty string if there is no good number for $\sigma$, or
else the longest sequence $x_0 \cdots x_n \in \N^{<\N}$, $n \geq 0$,
where
\begin{itemize}
\item $x_0$ is the least good number for $\sigma$;

\item each $x_i$ is good, say with witness $\tau_i b_i$;

\item for each $i < n$, $x_{i+1}$ is the least good $x > x_i$ such
that if $\tau b$ is its witness then $\tau \succ \tau_i$.

\end{itemize} 
Note that $\Sigma^0_0$ comprehension suffices to prove
the existence of a function $2^{<\N} \to \N^{<\N}$ which assigns to
each $\sigma \in 2^{<\N}$ its good sequence.

Now for each $i \in \N$, let $D_i$ be the set of all $\sigma \in
2^{<\N}$ that have a nonempty good sequence $x_0\cdots x_n$, and if
$\tau b$ is the witness of $x_n$ then
\begin{itemize}
\item either $\tau(j) = i$ for some $j < |\tau|$,

\item or $A_i \cap \bigcap_{j < |\tau|} A_{\tau(j)} = \emp$.

\end{itemize} 
The $D_i$ are clearly uniformly $\Pi^0_1$, and it is not
difficult to see that they are dense in $2^{<\N}$.  Indeed, let
$\sigma \in 2^{<\N}$ be given, and define $b$, $j$, and $x$ as
follows.  If the good sequence of $\sigma$ is empty, choose the least
$j \geq i$ such that $A_j \neq \emp$ and let $b \geq \min A_j$ be
large enough that $x = c^{-1}(jb) \geq |\sigma|$.  If the good
sequence of $\sigma$ is some nonempty string $x_0 \cdots x_n$ and
$\tau b_n$ is the witness of $x_n$, choose the least $j \geq i$ such
that $A_j \cap \bigcap_{k < |\tau|} A_{\tau(k)} \neq \emp$ and let $b
\geq \min A_j \cap \bigcap_{k < |\tau|} A_{\tau(k)}$ be large enough
that $x = c^{-1}(\tau jb) \geq |\sigma|$.  In either case, $j$ exists
because of our assumption that $A$ is nontrivial and has no finite
maximal subfamily with the $F$ intersection property.  Now define
$\widetilde{\sigma} \in 2^{<\N}$ of length $x+1$ by
\[
\widetilde{\sigma}(y) =
\begin{cases} 
\sigma(y) & \text{if } y < |\sigma|,\\ 
0 & \text{if }|\sigma| \leq y < x,\\
1 & \text{if } y = x
\end{cases}
\]
to get an extension of $\sigma$ that belongs to $D_i$.

Apply $\Pi^0_1\mathsf{G}$ to the $D_i$ to obtain a set $G$ such that
for all $i$, there is an $s$ with $G \res s \in D_i$.  Note, that by
definition, each such $s$ must be nonzero, and $G \res s$ must have a
nonempty good sequence.  Notice that if $s \leq t$ then the good
sequence of $G \res t$ extends (not necessarily properly) the good
sequence of $G \res s$.  Furthermore, our assumption that $A$ has no
finite maximal subfamily with the $F$ intersection property implies
that the good sequences of the initial segments of $G$ are arbitrarily
long.

Now find the least $s$ such that $G \res s$ has a nonempty good
sequence, and for each $t \geq s$, if $x_0\cdots x_n$ is the good
sequence of $G \res t$, let $\tau_t b_t$ be the witness of $x_n$.  By
the preceding paragraph, we have $\tau_t \preceq \tau_{t+1}$ for all
$t$, and $\lim_t |\tau_t| = \infty$.  Let $J = \bigcup_{t \geq s}
\tau_t$, which exists by $\Sigma^0_0$ comprehension.  It is
straightforward to check that $B = \langle A_{J(i)}: i \in \N \rangle$
is a maximal subfamily of $A$ with the $F$ intersection property.
\end{proof}

We end this section with the result that $F\IP$ does not imply
$\Pi^0_1\mathsf{G}$ or even $\AMT$.  Csima, Hirschfeldt, Knight, and
Soare~\cite[Theorem~1.5]{CHKS-2004} showed that for every set ${D}
\leq_T \emp'$, if every complete atomic decidable theory has an atomic
model computable in $D$, then $D$ is non-low$_2$.  Thus $\AMT$
cannot hold in any $\om$-model all of whose sets have low$_2$ degree.
In conjunction with Theorem~\ref{T:permit}~(2), this fact allows us to
separate $F\IP$ and $\AMT$.

\begin{figure}
\[
{\xymatrix@R-15pt@C-10pt{ \ACA \ar@2[dddd] \ar@2[dr] \ar@1[r] & D_n\IP
\ar@1[l] \\ & \Pi^0_1\mathsf{G}
\ar@/_0pc/[dddl] |-{\object@{|}} |>{\object@{}} \ar@2[ddd] \ar@2[dr]
\\ & & F\IP \ar@/_0pc/[ddl] |-{\object@{|}} |>{\object@{}} \ar@1[d]
 \\ & & \vdots
\ar@1[d] \\ \WKL \ar@/_0pc/[dddr] |-{\object@{|}} |>{\object@{}}
\ar@2[dddd] & \AMT \ar@2[ddd] & \overline{D}_n \IP \ar@1[d] \\ & &
\vdots \ar@1[d] \\ & & \overline{D}_2 \IP \ar@1[dl] \\ & \OPT \ar@2[dl] \\ \RCA }}
\]
\caption{A summary of the results of Section~\ref{S:FIP}, with
\mbox{$n \geq 2$} being arbitrary.  Arrows denote implications
provable in $\RCA$, double arrows denote implications which are known
to be strict, and negated arrows indicate nonimplications.}
\end{figure}
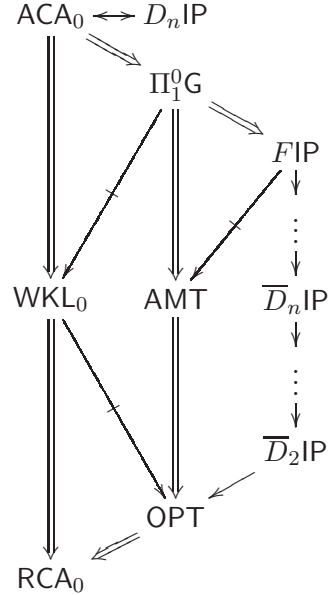

\begin{cor} 
For every noncomputable computably enumerable set $W$,
there exists an $\om$-model $\M$ of $\RCA + F\IP$ with $X \leq_T W$
for all $X \in \M$.  Therefore $F\IP$ does not imply
$\mathsf{AMT}$  over $\RCA$.
\end{cor}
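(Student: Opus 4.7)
The plan is to build an $\om$-model $\M$ of $\RCAo + F\IP$ with $X \leq_T W$ for all $X \in \M$, and then choose $W$ appropriately. First, I note that the proof of Theorem~\ref{T:permit} relativizes: for any set $X$ and any $X$-computable nontrivial family $A$, if $W$ is c.e.\ with $W \not\leq_T X$, then there is a maximal subfamily of $A$ with the $F$ intersection property computable from $X \oplus W$. The construction is carried out relative to $X$, and the contradiction derived at the end of the original argument becomes $W \leq_T X$, which is ruled out by hypothesis.

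To iterate this relativized form while keeping all sets strictly below $W$, I would further strengthen it with cone-avoidance requirements: for each Turing functional $\Phi_e$, add the requirement $\Phi_e^{X \oplus B} \neq W$. Because $W$ is c.e., each such requirement can be met via a standard priority argument interleaved with the permitting mechanism --- whenever a potential disagreement appears at a witness, it is preserved by declining to act at those stages, and the c.e.\ enumeration of $W$ eventually supplies the permissions needed for the maximality requirements. This yields the strengthening: for $X$ with $W \not\leq_T X$, every $X$-computable nontrivial family $A$ admits a maximal subfamily $B$ with the $F$ intersection property satisfying both $B \leq_T X \oplus W$ and $W \not\leq_T X \oplus B$.

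With this tool in hand, I enumerate pairs $(n,e)$ where $n$ is a stage index and $e$ codes an $X_n$-computable family, and construct a sequence $\emptyset = X_0 \leq_T X_1 \leq_T X_2 \leq_T \cdots$ of sets computable from $W$ as follows: at stage $s$ corresponding to $(n,e)$ with family $A$, the strengthened theorem produces a maximal subfamily $B_s$, and I set $X_{s+1} = X_s \oplus B_s$, maintaining the invariant $W \not\leq_T X_s$ throughout. Let $\M = \bigcup_s \{Y : Y \leq_T X_s\}$. Then $\M$ is closed under $\leq_T$ and $\oplus$, every set in $\M$ is $\leq_T W$, and every nontrivial family in $\M$ has a maximal subfamily in $\M$ with the $F$ intersection property; hence $\M \models \RCAo + F\IP$.

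For the separation, I take $W$ to be a noncomputable low c.e.\ set, whose existence is provided by Friedberg's theorem. Then every $Y \in \M$ is low and hence low$_2$. By the Csima--Hirschfeldt--Knight--Soare result cited above, there is a complete atomic decidable theory $T$ with no atomic model computable in $W$; $T$ is itself computable and therefore lies in $\M$, but no atomic model of $T$ is $\leq_T W$, so none belongs to $\M$, and $\M \not\models \AMT$. The main obstacle will be the cone-avoidance strengthening of Theorem~\ref{T:permit}: the original argument only gives $B \leq_T X \oplus W$ and does not prevent $X \oplus B$ from computing $W$, so the delicate part is combining the permitting construction with Friedberg--Muchnik-style diagonalization without destroying the maximality argument.
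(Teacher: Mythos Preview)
Your overall plan is sound and would yield the result, but the step you yourself flag as the main obstacle---strengthening the permitting construction with cone avoidance so that $W \not\leq_T X \oplus B$---is a genuine technical addition that you do not actually carry out. Interleaving Sacks-style preservation with the permitting mechanism of Theorem~\ref{T:permit} is not automatic: one must verify that restraints imposed to freeze computations $\Phi_e^{X\oplus B}$ do not spoil the maximality verification, which in the original proof relied on being free to insert and extract copies of each index~$i$ whenever $W$ changes below the relevant marker.

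The paper sidesteps this difficulty entirely by appealing to Sacks's density theorem. Given the noncomputable c.e.\ set $W$, fix a chain $\emptyset <_T W_0 <_T W_1 <_T \cdots <_T W$ of c.e.\ sets and take $\M$ to be the Turing ideal $\{X : (\exists i)[X \leq_T W_i]\}$. Now the \emph{unmodified} relativization of Theorem~\ref{T:permit} suffices at every level: each $W_{i+1}$ is c.e.\ and not $W_i$-computable, so every $W_i$-computable nontrivial family has a maximal subfamily with the $F$ intersection property computable from $W_{i+1}$. No cone avoidance is needed, because Sacks density has already supplied an infinite ladder of c.e.\ degrees strictly below~$W$. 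The separation from $\AMT$ then follows by taking $W$ low$_2$ (your choice of a low $W$ works equally well). In short, the paper trades your bespoke combined priority construction for a black-box invocation of a classical theorem plus the permitting argument exactly as already proved.
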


\begin{proof} 
By Sacks's density theorem, there exist computably
enumerable sets $\emp <_T W_0 <_T W_1 <_T \cdots < W$.  Let $\M$ be
the $\om$-model whose second-order part consists of all sets $X$ such
that $X \leq_T W_i$ for some~$i$.  For each $i$, Theorem
\ref{T:permit}~(2) relativized to $W_i$ implies that every
$W_i$-computable nontrivial family of sets has a $W_{i+1}$-computable
maximal subfamily with the $F$ intersection property.  Thus, $\M
\models F\IP$.  The second part follows by building $\M$ with $W$
low$_2$.
\end{proof}

\section{Properties of finite character}\label{S:FCP}

The last family of choice principles we study makes use of properties
of finite character, sometimes in conjunction with finitary closure
operators (see Definitions~\ref{D:CE} and~\ref{D:NCE}). We will show
that these principles are equivalent to well known subsystems of
arithmetic, unlike the intersection principles of the last section. 

\begin{defn}
A formula $\varphi$ with one free set variable $X$ is
said to be of {\em finite character} (or have the {\em finite
character property}) if $\varphi(\emp)$ holds and, for every set $A$,
$\varphi(A)$ holds if and only if $\varphi(F)$ holds for every finite
$F \subseteq A$.
\end{defn}

The following basic facts are provable in $\mathsf{ZF}$.

\begin{prop}\label{p:fcmonotone}
Let $\varphi(X)$ be a formula of finite character.
\begin{enumerate}
\item If $A \subseteq B$ and $\varphi(B)$ holds then $\varphi(A)$
holds.
\item If $A_0 \subseteq A_1 \subseteq A_2 \subseteq \cdots$ is a
sequence of sets such that $\varphi(A_i)$ holds for each $i\in\om$,
then $\varphi(\bigcup_{i \in \om} A_i)$ holds.
\end{enumerate}
\end{prop}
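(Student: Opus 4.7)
The plan is to prove both parts directly from the finite character property, using part (1) as a stepping stone to part (2).

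For part (1), I would argue as follows. Since $\varphi$ has finite character, $\varphi(A)$ holds if and only if $\varphi(F)$ holds for every finite $F \subseteq A$. So fix an arbitrary finite $F \subseteq A$. Since $A \subseteq B$, we have $F \subseteq B$ and $F$ is finite. By the assumption that $\varphi(B)$ holds and the finite character of $\varphi$ (in the direction that finite subsets inherit the property), $\varphi(F)$ holds. As $F$ was arbitrary, the finite character property (in the other direction) yields $\varphi(A)$.

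For part (2), set $A = \bigcup_{i \in \om} A_i$. Again by finite character, it suffices to show $\varphi(F)$ for an arbitrary finite $F \subseteq A$. The key observation is that because $F$ is finite and the sets $A_i$ form an increasing chain, there exists $n \in \om$ such that $F \subseteq A_n$: enumerate $F = \{x_0,\ldots,x_k\}$, pick for each $x_j$ an index $n_j$ with $x_j \in A_{n_j}$, and take $n = \max_{j \leq k} n_j$. Then $F \subseteq A_n$ by monotonicity of the chain. Since $\varphi(A_n)$ holds by hypothesis, part (1) gives $\varphi(F)$. Since $F$ was arbitrary, finite character yields $\varphi(A)$.

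The argument is entirely routine and there is no real obstacle; the only thing to watch is that the proof is carried out in $\mathsf{ZF}$ rather than any weaker system, so one is free to form the finite index set $\{n_0,\ldots,n_k\}$ and take its maximum without appeal to choice. Note that the argument for part (2) uses essentially only the directedness of the chain under $\subseteq$, and generalizes immediately to arbitrary directed unions of sets satisfying $\varphi$.
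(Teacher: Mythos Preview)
Your proof is correct; the paper states this proposition as a basic fact provable in $\mathsf{ZF}$ and does not supply a proof. Your argument is exactly the standard one that the authors had in mind, so there is nothing to add.
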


We restrict our attention to formulas of second-order arithmetic, and
consider countable analogues of several variants of the principle
asserting that for every formula of finite character, every set has a
maximal subset (under inclusion) satisfying that formula.  Since the
empty set satisfies any formula of finite character by definition, the
validity of this principle can be seen by a simple application of
Zorn's lemma.

The formalism here will be simpler than that in the previous section
because we are dealing only with sets and their subsets, rather than
with families of sets and their subfamilies.  All the intersection
properties studied in Section~\ref{S:FIP} can, in principle, be thought of as being
defined by formulas of finite character.  For example, given a family
$A = \langle A_i : i \in \N \rangle$, the formula $(\forall i)(\forall
j)[A_i \cap A_j \neq \emp]$ has the finite character property, and if
$J = \{j_0 < j_1 < \cdots\}$ is a maximal subset of $\N$ satisfying
it, then $\langle A_{j_i} : i \in \N \rangle$ is a maximal subfamily
of $A$ with the $\overline{D}_2$ intersection property.  However, such
an analysis of $\overline{D}_2\IP$ would be too crude in light of
Proposition~\ref{P:familydef}.  Therefore, our focus in this section
will instead be on formulas of finite character in general, and on 
the strengths of principles based on formulas of finite character from 
restricted syntactic classes.

\subsection{\texorpdfstring{The scheme $\FCP$}{The scheme FCP}}

We begin with various forms of the following principle.

\begin{defn} 
The following scheme is defined in $\RCA$.
\begin{list}{\labelitemi}{\leftmargin=0em}\itemsep2pt
\item[]($\FCP$) For each formula $\varphi$ of finite character, which
may have arbitrary parameters, every set $A$ has a
\mbox{$\subseteq$-maximal} subset $B$ such that $\varphi(B)$ holds.
\end{list}
\end{defn}

\noindent In set theory, $\FCP$ corresponds to the principle $\mathsf{M} \, 7$ in
the catalog of Rubin and Rubin~\cite{RR-1985}, and is equivalent to
the axiom of choice~\cite[p.~34 and Theorem~4.3]{RR-1985}.

In order to better gauge the reverse mathematical strength of $\FCP$,
we consider restrictions of the formulas to which it applies.  As with
other such ramifications, we will primarily be interested in
restrictions to the classes in the arithmetical and analytical
hierarchies.  In particular, for each $i \in \{0,1\}$ and $n \geq 0$,
we make the following definitions:
\begin{itemize}
\item $\Sigma^i_n\text{-}\FCP$ is the restriction of $\FCP$ to
$\Sigma^i_n$ formulas;

\item $\Pi^i_n\text{-}\FCP$ is the restriction of $\FCP$ to $\Pi^i_n$
formulas;

\item $\Delta^i_n\text{-}\FCP$ is the scheme which says that for every
$\Sigma^i_n$ formula $\varphi(X)$ and every $\Pi^i_n$ formula
$\psi(X)$, if $\varphi(X)$ is of finite character and
\[ 
(\forall X)[\varphi(X) \leftrightarrow \psi(X)],
\]
then every set $A$ has a $\subseteq$-maximal set $B$ such that
$\varphi(B)$ holds.

\end{itemize} 
We also define $\QF\text{-}\FCP$ to be the restriction of $\FCP$ to the
class of quantifer-free formulas without parameters.

Our first result in this section is the following theorem, which will
allow us to neatly characterize most of the above restrictions of
$\FCP$ (see Corollary~\ref{c:fcpstrength}).  We draw attention to
part~(2) of the theorem, where $\Sigma^0_1$ does not appear in the list of
classes of formulas.  The reason behind this will be made apparent by
Proposition~\ref{P:Sig1_RCA}.

\begin{thm}\label{thm_main_fcp} 
For $i \in \{0,1\}$ and $n \geq 1 $,
let $\Gamma$ be any of $\Pi^i_n$, $\Sigma^i_n$, or~$\Delta^i_n$.
\begin{enumerate}
\item $\Gamma$-$\FCP$ is provable in $\Gamma$-$\CA$;

\item If\/ $\Gamma$ is $\Pi^0_n$, $\Pi^1_n$, $\Sigma^1_n$, or
$\Delta^1_n$, then $\Gamma$-$\FCP$ implies $\Gamma$-$\CA$
over~$\RCAo$.

\end{enumerate}
\end{thm}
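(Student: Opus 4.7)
For part (1), the plan is to construct $B$ by a greedy sweep through $A$. Given a formula $\varphi(X)$ of finite character in $\Gamma$ and an enumeration $A = \{a_0, a_1, \ldots\}$, I will first use $\Gamma$-comprehension to form the tree
\[
  T = \{\sigma \in 2^{<\N} : \varphi(\{a_i : i < |\sigma|,\ \sigma(i) = 1\})\};
\]
for $\Gamma = \Delta^1_n$ the two equivalent defining formulas of $\varphi$ give $\Sigma^1_n$ and $\Pi^1_n$ definitions of $T$, so $\Delta^1_n$-$\CA$ still yields $T$. Using the arithmetical recursion available in every one of the $\Gamma$-$\CA$ systems under consideration, I will then define $\sigma^\ast \in 2^{\N}$ by $\sigma^\ast(i) = 1$ iff $(\sigma^\ast \res i)\frown\langle 1\rangle \in T$, and set $B = \{a_i : \sigma^\ast(i) = 1\}$. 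The verification splits into two routine pieces: $\varphi(B)$ holds because every finite subset of $B$ is contained in some $\{a_j : j < i,\ \sigma^\ast(j) = 1\}$, all of which satisfy $\varphi$ by construction, and finite character then promotes this to $\varphi(B)$; and if $a_i \in A \setminus B$ then the finite set $\{a_j : j < i,\ \sigma^\ast(j) = 1\} \cup \{a_i\}$ fails $\varphi$, which by finite character yields $\neg\varphi(B \cup \{a_i\})$, so $B$ is $\subseteq$-maximal via Proposition~\ref{p:fcmonotone}(1).

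For part (2), the plan is to translate a given instance of $\Gamma$-$\CA$ into one of $\Gamma$-$\FCP$. Given a formula $\psi(x)$ in $\Gamma$, I will work with
\[
  \varphi(X) \equiv (\forall x)(x \in X \to \psi(x)),
\]
which is trivially of finite character. The key calculation is to check that $\varphi$ itself lies in $\Gamma$ in each of the four listed cases: for $\Gamma = \Pi^0_n$ and $\Gamma = \Pi^1_n$ this is immediate from closure under universal number quantification; for $\Gamma = \Sigma^1_n$ I absorb the universal number quantifier into the leading existential set quantifier via the standard $\RCA$-provable coding of sequences of sets (writing $\forall x\, \exists X_1\, \theta(x, X_1)$ as $\exists Y\, \forall x\, \theta(x, (Y)_x)$), and for $\Gamma = \Delta^1_n$ I combine the $\Sigma^1_n$ and $\Pi^1_n$ arguments to obtain both forms of $\varphi$. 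Applying $\Gamma$-$\FCP$ to $\varphi$ with $A = \N$ yields a $\subseteq$-maximal $B$ with $\varphi(B)$, and then a short argument shows $B = \{x : \psi(x)\}$: if $x \in B$ then finite character gives $\varphi(\{x\})$ and hence $\psi(x)$, while if $\psi(x)$ holds then $\varphi(B \cup \{x\})$ holds, so maximality forces $x \in B$. This produces the desired instance of $\Gamma$-$\CA$.

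The main obstacle will be the complexity bookkeeping in part (2), particularly the $\Sigma^1_n$ case where the absorption of the universal number quantifier must be handled with care to avoid raising the analytical level. This is also the conceptual reason the reversal half excludes $\Sigma^0_n$ for $n \geq 1$: the analogous universal number quantifier over a $\Sigma^0_n$ formula lifts $\varphi$ up to $\Pi^0_{n+1}$, outside the allowed class, so the naive construction breaks. The special behaviour at the bottom of the arithmetical hierarchy, for $\Sigma^0_1$, will be treated separately in Proposition~\ref{P:Sig1_RCA}.
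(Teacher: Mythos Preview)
Your proposal is correct and follows essentially the same approach as the paper: the greedy tree/path construction for part~(1) is a notational variant of the paper's functions $g$ and $h$, and part~(2) is identical (with the roles of $\varphi$ and $\psi$ swapped), though you supply more detail on the $\Sigma^1_n$ quantifier absorption than the paper does. One technical point you gloss over in part~(1): since $\Lang_2$ has no set-term-forming operations, the expression $\varphi(\{a_i : i < |\sigma|,\ \sigma(i)=1\})$ is not literally an $\Lang_2$-formula in $\sigma$, so forming $T$ by $\Gamma$-comprehension requires first passing to a formula $\chat{\varphi}(n)$ on canonical indices of the same complexity as $\varphi$---this is exactly what the paper's Lemma~\ref{l:finiteset} provides, and you should invoke it (or an equivalent device) to make the comprehension step rigorous.
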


We will make use of the following technical lemma in the proof (as
well as in the proof of Theorem~\ref{T:mainchar_CE} below). It is
needed only because there are no term-forming operations for sets in
$\Lang_2$. For example, there is no term in $\Lang_2$ that takes a set
$X$ and a canonical index $n$ and returns $X \cup D_n$. (Recall that
each finite (possibly empty) set of natural numbers is coded by a
unique natural number known as its \textit{canonical index}, and that
$D_n$ denotes the finite set with canonical index $n$.) The moral of
the lemma is that such terms can be interpreted into $\Lang_2$ in a
natural way.

The coding of finite sets by their canonical indices can be formalized
in $\RCAo$ in such a way that the predicate $i \in D_n$ is defined by
a formula $\rho(i,n)$ with only bounded quantifiers, and such that the
set of canonical indices is also definable by a bounded-quantifier
formula~\cite[Theorem II.2.5]{Simpson-2009}. Moreover, $\RCAo$ proves
that every finite set has a canonical index. We use the notation $Y =
D_n$ to abbreviate the formula $(\forall i)[ i \in Y \leftrightarrow
\rho(i,n)]$, along with similar notation for subsets of finite sets.

\begin{lem}\label{l:finiteset} 
Let $\varphi(X)$ be a formula with one
free set variable. There is a formula $\chat{\varphi}(x)$ with one
free number variable such that $\RCA$ proves
\begin{equation}\label{e:finiteset} 
(\forall A)(\forall n)[ A = D_n
  \to (\varphi(A) \leftrightarrow \chat{\varphi}(n))].
\end{equation}
Moreover, we may take $\chat{\varphi}$ to have the same
complexities in the arithmetical and analytic hierarchies
as~$\varphi$.
\end{lem}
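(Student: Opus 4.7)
The plan is to construct $\chat{\varphi}$ by induction on the structure of $\varphi$, systematically replacing every atomic subformula of the form $t \in X$ (where $X$ is the designated free set variable and $t$ is a number term) by the bounded formula $\rho(t,x)$ that defines membership in $D_x$, and leaving every other atomic subformula alone. By renaming bound variables, we may first assume that $X$ does not occur bound in $\varphi$, so all occurrences of $X$ are free and each appears in an atomic subformula $t \in X$.

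The recursive definition is as follows. If $\varphi$ is atomic and does not contain $X$, set $\chat{\varphi} := \varphi$; if $\varphi$ is $t \in X$, set $\chat{\varphi}(x) := \rho(t,x)$. For Boolean connectives we set $\widehat{\neg \psi} := \neg \chat{\psi}$ and $\widehat{\psi \wedge \chi} := \chat{\psi} \wedge \chat{\chi}$ (and similarly for the other connectives), and for quantifiers we set $\widehat{Q y\, \psi} := Q y\, \chat{\psi}$ for a number variable $y$ and $\widehat{Q Y\, \psi} := Q Y\, \chat{\psi}$ for a set variable $Y \not\equiv X$, where $Q \in \{\forall,\exists\}$. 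To verify that \RCAo proves equation~\eqref{e:finiteset}, fix $A$ and $n$ with $A = D_n$, i.e., $(\forall i)[i \in A \leftrightarrow \rho(i,n)]$, and show $\varphi(A) \leftrightarrow \chat{\varphi}(n)$ by an external meta-induction on the construction of $\chat{\varphi}$. The atomic case $t \in X$ is immediate from the hypothesis $A = D_n$, atomic formulas not involving $X$ are unchanged, and the connective and quantifier cases are routine since $\chat{\varphi}$ commutes with each of them by construction.

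For the complexity claim, observe that the only modification to the formula is the replacement of atomic occurrences of $t \in X$ by $\rho(t,x)$, which is a bounded (hence $\Delta^0_0$) formula. Inserting a $\Delta^0_0$ formula in place of an atomic formula does not alter the $\Sigma^i_n$ or $\Pi^i_n$ counting in either the arithmetical or the analytic hierarchy (bounded number quantifiers are absorbed by any surrounding unbounded number quantifier, and they are invisible to the analytic-hierarchy count entirely). Thus $\chat{\varphi}$ lies in the same hierarchy class as $\varphi$.

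There is really no serious obstacle here: the content is a straightforward syntactic translation accompanied by an external induction on formula structure. The only points requiring care are the bound-variable renaming convention and the observation that the bounded quantifiers introduced by $\rho$ are absorbed without raising complexity; both are standard.
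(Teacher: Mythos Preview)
Your proof is correct and follows essentially the same approach as the paper: both define $\chat{\varphi}$ by syntactically replacing each atomic subformula $t \in X$ by the bounded formula $\rho(t,x)$ and then verify the equivalence by an external metainduction on formula structure, with complexity preservation following because $\rho$ is $\Delta^0_0$. The only cosmetic difference is that the paper first passes to prenex normal form and splits the metainduction into two stages (one for the quantifier-free matrix, one for the quantifier prefix), whereas you recurse directly on the unmodified formula; this is purely stylistic.
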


\begin{proof} 
Let $\rho(i,n)$ be the formula defining the relation $i
\in D_n$, as discussed above. We may assume $\varphi$ is written in
prenex normal form. Form $\chat{\varphi}(n)$ by replacing each
occurrence $t \in X$ of $\varphi$, $t$ a term, with the formula
$\rho(t,n)$.

Let $\psi(X, \bar{Y}, \bar{m})$ be the quantifier-free matrix of
$\varphi$, where $\bar{Y}$ and $\bar{m}$ are sequences of variables
that are quantified in~$\varphi$. Similarly, let
$\chat{\psi}(n,\bar{Y},\bar{m})$ be the matrix of $\chat{\varphi}$.
Fix any model $\M$ of $\RCA$ and fix $n,A \in \M$ such that $\M
\models A = D_n$. A straightforward metainduction on the structure of
$\psi$ proves that
\[
\M \models (\forall \bar{Y})(\forall \bar{m})[ \psi(A, \bar{Y},
\bar{m}) \leftrightarrow \chat{\psi}(n, \bar{Y}, \bar{m}).
\] 

The key point is that the atomic formulas in $\psi(A, \bar{Y},
\bar{m})$ are the same as those in $\chat{\psi}(n,\bar{Y},\bar{m})$,
with the exception of formulas of the form $t \in A$, which have been
replaced with the equivalent formulas of the form~$\rho(t,n)$.

A second metainduction on the quantifier structure of $\varphi$ shows
that we may adjoin quantifiers to $\psi$ and $\chat{\psi}$ until we
have obtained $\varphi$ and $\chat{\varphi}$, while maintaining
logical equivalence.  Thus every model of $\RCA$
satisfies~(\ref{e:finiteset}).

Because $\rho$ has only bounded quantifiers, the substitution required
to pass from $\varphi$ to $\chat{\varphi}$ does not change the
complexity of the formula.
\end{proof}

If $F$ is any finite set and $n$ is its canonical index, we sometimes
write $\chat{\varphi}(F)$ for $\chat{\varphi}(n)$.

\begin{proof}[Proof of Theorem~\ref{thm_main_fcp}] 
For~(1), let $\varphi(X)$ and $A = \{a_i : i \in \N\}$ be an
instance of $\Gamma$-$\FCP$. Define $g \colon 2^{<\N} \times \N \to
2^{<\N}$ by
\[ 
g(\tau,i) =
\begin{cases} 
  1 & \text{if } \chat{\varphi}(\{ a_j : \tau(j) \convergesto 1\} 
       \cup \{a_i\}) \text{ holds},\\ 
0 & \text{otherwise}.
\end{cases}
\] 
where $\chat{\varphi}$ is as in the lemma, and for a finite set $F$,
$\chat{\varphi}(F)$ refers to $\chat{\varphi}(n)$ where $n$ is the
canonical index of~$F$.  The function $g$ exists by $\Gamma$
comprehension.  By primitive recursion, there exists a function $h
\colon \N \to 2^{<\N}$ such that for all $i \in \N$, $h(i) = 1$ if and
only if $g(h \res i, i) = 1$.  For each $i \in \N$, let $B_i = \{a_j :
j < i \land h(j) = 1\}$.  An induction on $\varphi$ shows that
$\varphi(B_i)$ holds for every $i\in \N$.

Let $B = \{ a_i : h(i) = 1\} = \bigcup_{i \in \N} B_i$.  Because
Proposition~\ref{p:fcmonotone} is provable in $\ACA$ and hence in
$\Gamma\text{-}\CA$, it follows that $\varphi(B)$ holds.  By the same
token, if $\varphi(B \cup \{a_k\})$ holds for some $k$ then so must
$\varphi(B_k \cup \{a_k\})$, and therefore $a_k \in B_{k+1}$, which
means that $a_k \in B$.  Therefore $B$ is $\subseteq$-maximal, and we
have shown that $\Gamma$-$\CA$ proves $\Gamma$-$\FCP$.

For~(2), we assume $\Gamma$ is one of $\Pi^0_n$, $\Pi^1_n$, or
$\Sigma^1_n$; the proof for $\Delta^1_n$ is similar. We work in $\RCA
+ \Gamma\text{-}\FCP$.  Let $\varphi(n)$ be a formula in $\Gamma$ and
let $\psi(X)$ be the formula $(\forall n)[n \in X \to \varphi(n)].$ It
is easily seen that $\psi$ is of finite character and belongs to
$\Gamma$.  By $\Gamma$-$\FCP$, $\N$ contains a $\subseteq$-maximal
subset $B$ such that $\psi(B)$ holds. For any~$y$, if $y \in B$ then
$\varphi(y)$ holds.  On the other hand, if $\varphi(y)$ holds then so
does $\psi(B \cup \{y\})$, so $y$ must belong to $B$ by maximality.
Therefore $B = \{y \in \N : \varphi(y) \}$, and we have shown that
$\Gamma$-$\FCP$ implies~$\Gamma$-$\CA$.
\end{proof}

The corollary below summarizes the theorem as it applies to the
various classes of formulas we are interested in.  Of special note
is part~(5), which says that $\FCP$ itself (that is, $\FCP$ for
arbitrary $\Lang_2$-formulas) is as strong as any theorem of
second-order arithmetic can be.

\begin{cor}\label{c:fcpstrength}
The following are provable in $\RCA$:
\begin{enumerate}
\item $\Delta^0_1$-$\FCP$, $\Sigma^0_0\text{-}\FCP$, and
$\QF\text{-}\FCP$;

\item for each $n \geq 1$, $\ACA$ is equivalent to $\Pi^0_n$-$\FCP$;

\item for each $n \geq 1$, $\Delta^1_n\text{-}\CA$ is equivalent to
$\Delta^1_n$-$\FCP$;

\item for each $n \geq 1$, $\Pi^1_n$-$\CA$ is equivalent to\/
$\Pi^1_n$-$\FCP$ and to $\Sigma^1_n$-$\FCP$;

\item $\mathsf{Z}_2$ is equivalent to $\FCP$.

\end{enumerate}
\end{cor}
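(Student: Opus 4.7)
The plan is to read each clause of the corollary as an immediate consequence of Theorem~\ref{thm_main_fcp}, combined with well-known equivalences among the comprehension schemes. The only piece that requires checking is clause~(1), where we must verify that the proof of Theorem~\ref{thm_main_fcp}(1) goes through already in $\RCAo$.

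For (1), I would inspect the construction used to prove Theorem~\ref{thm_main_fcp}(1). The function $g\colon 2^{<\N} \times \N \to 2^{<\N}$ is defined by $\chat{\varphi}$, and by Lemma~\ref{l:finiteset}, if $\varphi$ is $\Delta^0_1$ then so is $\chat{\varphi}$. Hence $g$ exists in $\RCAo$, and $h$ is then obtained by primitive recursion, which is available in $\RCAo$.  The finitary induction showing $\varphi(B_i)$ for every $i$ is an induction on a $\Delta^0_1$ statement about~$i$, which is fine under $\Sigma^0_1$ induction. Finally, the passage from the $\varphi(B_i)$ to $\varphi(B)$ uses Proposition~\ref{p:fcmonotone}, whose relevant instance here is for a $\Delta^0_1$ formula and so is provable in $\RCAo$. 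Since $\QF$ and $\Sigma^0_0$ formulas are in $\Delta^0_1$, all three subcases follow.

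For (2), Theorem~\ref{thm_main_fcp}(1) gives $\Pi^0_n\text{-}\CA \vdash \Pi^0_n\text{-}\FCP$, and $\ACAo$ proves each $\Pi^0_n\text{-}\CA$ for $n \geq 1$; the reversal is Theorem~\ref{thm_main_fcp}(2). Clause (3) is immediate from both parts of the theorem applied with $\Gamma = \Delta^1_n$. For (4), Theorem~\ref{thm_main_fcp}(1) gives $\Pi^1_n\text{-}\CA \vdash \Pi^1_n\text{-}\FCP$ and $\Sigma^1_n\text{-}\CA \vdash \Sigma^1_n\text{-}\FCP$, while the reversals are from Theorem~\ref{thm_main_fcp}(2); the remaining equivalence follows from the standard fact that $\Pi^1_n\text{-}\CA$ and $\Sigma^1_n\text{-}\CA$ coincide over $\RCAo$ via complementation.

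For (5), $\mathsf{Z}_2$ proves $\FCP$ by the construction in Theorem~\ref{thm_main_fcp}(1) applied with full comprehension. Conversely, since $\FCP$ entails $\Pi^1_n\text{-}\FCP$ for every $n$, clause~(4) gives $\Pi^1_n\text{-}\CA$ for every~$n$, and the union of these schemes is precisely the full comprehension scheme of $\mathsf{Z}_2$. The only mild subtlety in the whole argument is checking, as in the first step, that Theorem~\ref{thm_main_fcp}(1)'s proof formalizes in $\RCAo$ when $\Gamma = \Delta^0_1$; everything else is routine bookkeeping with known subsystem equivalences.
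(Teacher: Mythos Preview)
Your proposal is correct and matches the paper's (implicit) approach: the paper states the corollary without proof, treating it as an immediate summary of Theorem~\ref{thm_main_fcp} together with the standard equivalences among comprehension schemes. You have correctly identified the one point that genuinely needs checking, namely that for clause~(1) the construction in the proof of Theorem~\ref{thm_main_fcp}(1) must be verified to go through in $\RCAo$ when $\Gamma = \Delta^0_1$; the paper's phrasing ``Proposition~\ref{p:fcmonotone} is provable in $\ACA$ and hence in $\Gamma\text{-}\CA$'' does not literally cover this case, and your argument that the relevant instance of Proposition~\ref{p:fcmonotone} and the needed induction are available in $\RCAo$ for $\Delta^0_1$ formulas fills that small gap.
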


The case of $\FCP$ for $\Sigma^0_1$ formulas is anomalous.  The proof
of part (2) of the theorem does not go through for $\Sigma^0_1$
because this class is not closed under universal quantification.  As
the proof of the next proposition shows, this limitation is quite
significant.  Intuitively, it means that a $\Sigma^0_1$ formula
$\varphi(X)$ of finite character can only control a fixed finite piece
of a set~$X$.  Hence, for the purposes of finding a maximal subset of
which $\varphi$ holds, we can replace $\varphi$ by a formula with only
bounded quantifiers.

\begin{prop}\label{P:Sig1_RCA} 
$\Sigma^0_1$-$\FCP$ is provable in $\RCA$.
\end{prop}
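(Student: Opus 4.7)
My plan is to prove $\Sigma^0_1\text{-}\FCP$ in $\RCAo$ by reducing it to $\Sigma^0_0\text{-}\FCP$, which is established as part of Corollary~\ref{c:fcpstrength}(1).  The reduction goes through by showing that every $\Sigma^0_1$ formula of finite character is equivalent, over $\RCAo$, to a bounded formula.

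Write the given formula as $\varphi(X) \equiv (\exists n)\theta(n, X)$ with $\theta$ bounded.  Finite character forces $\varphi(\emp)$, so by $\Sigma^0_1$-induction (available in $\RCAo$) there is a least $n_0$ with $\theta(n_0, \emp)$, and since $\theta$ is bounded one can effectively compute from $n_0$ a number $N$ such that the truth of $\theta(n_0, X)$ depends only on $X \cap [0, N]$.  The central step is to show that $\varphi$ itself has this kind of bounded dependence.  The motivation is topological: $\{X : \varphi(X)\}$ is $\Sigma^0_1$ (open in Cantor space), while by finite character its complement equals
\[
\bigcup_{F \text{ finite},\, F \notin \mathcal{F}} \{X : F \subseteq X\},
\]
where $\mathcal{F}$ is the family of finite $\varphi$-satisfying sets.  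Each $\{X : F \subseteq X\}$ is clopen, so this union is open; hence $\{X : \varphi(X)\}$ is clopen, which corresponds semantically to $\varphi$ being equivalent to a $\Sigma^0_0$ formula.

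Once one has a bounded $\psi$ equivalent to $\varphi$, applying $\Sigma^0_0\text{-}\FCP$ to $\psi$ and the given set $A$ yields the required $\subseteq$-maximal subset.  Alternatively, one can run the greedy construction from the proof of Theorem~\ref{thm_main_fcp}(1) directly: the reduction makes $\chat{\varphi}$, restricted to canonical indices of finite sets, both $\Sigma^0_1$ (automatically) and $\Pi^0_1$ (because finite character renders its negation a bounded existential over $\Pi^0_1$ statements about minimal forbidden sets, which collapses to $\Pi^0_1$), hence $\Delta^0_1$, so the function $g$ in that proof exists by $\Delta^0_1$-comprehension rather than requiring $\Sigma^0_1\text{-}\mathsf{CA}$.

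The principal obstacle is carrying out the clopen-ness argument inside $\RCAo$, since the underlying compactness of Cantor space is generally only available in $\WKLo$.  The resolution is a direct syntactic argument that bypasses topology: using $\Sigma^0_1$-induction together with the bounded form of each $\theta(n, \cdot)$, one extracts a uniform $K$ such that $\varphi(F) \iff (\exists n \leq K)\theta(n, F)$ for every finite $F$, thereby replacing the unbounded $\exists n$ in $\varphi$ by a bounded quantifier, which is exactly the reduction required.
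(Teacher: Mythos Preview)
Your overall strategy---reduce a $\Sigma^0_1$ property of finite character to a $\Sigma^0_0$ one and then invoke $\Sigma^0_0\text{-}\FCP$---is natural, and the topological picture (such a property cuts out a clopen subset of Cantor space) is correct in the standard model.  But the gap sits exactly where you locate it.  You assert that ``a direct syntactic argument'' produces a uniform $K$ with $\varphi(F) \leftrightarrow (\exists n \leq K)\,\theta(n,F)$ for every finite $F$, yet you never supply that argument, and it is not clear that $\Sigma^0_1$ induction alone yields such a $K$.  Knowing that each $\theta(n,\cdot)$ depends only on $X \cap [0,b(n)]$ does not by itself bound the witnesses $n$ needed as $F$ varies; the compactness you are trying to replace is doing genuine work here.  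Your alternative route, arguing that $\chat{\varphi}$ is $\Delta^0_1$ because its negation is a ``bounded existential over $\Pi^0_1$ statements about minimal forbidden sets,'' has the same defect: it presupposes that the minimal forbidden sets are themselves bounded, which is again the missing step.

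The paper's proof avoids the issue entirely by \emph{not} attempting to show $\varphi$ equivalent to a bounded formula.  From the DNF of $\psi(X,n_0)$ (where $n_0$ witnesses $\varphi(\emptyset)$) it extracts a single finite set $F_0$ with the one-directional implication $X \cap F_0 = \emptyset \Rightarrow \varphi(X)$.  Given $A$, this already guarantees $\varphi(A \setminus F_0)$; then $\Sigma^0_1$ induction (as the least-number principle applied to the $\Sigma^0_1$ predicate ``some $F$ of size $k$ has $\varphi(A \setminus F)$'') produces an $F$ of minimum cardinality with $\varphi(A \setminus F)$, and minimality of $|F|$ immediately forces $A \setminus F$ to be $\subseteq$-maximal.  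No uniform witness bound and no two-sided equivalence with a bounded formula is ever required.
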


\begin{proof}
Let $\varphi(X)$ be a $\Sigma^0_1$ formula of finite
character.  We claim that there exists a finite subset $F$ of $\N$
such that for every set $A$, if $F \cap A = \emptyset$ then
$\varphi(A)$ holds.  Let $\psi(X,x)$ be a bounded quantifier formula
such that $\varphi(X) \equiv (\exists x)\, \psi(X,x)$, and fix $n$
such that $\psi(\emp,n)$ holds.  Note that $\psi(X,n)$ is a bounded
quantifier formula with no free number variables.  Any such formula is
equivalent to a quantifier-free formula, because each quantifier will
be bounded by a standard natural number. In turn, each quantifier-free
formula can be written as a disjunction of conjunctions of atomic
formulas and their negations.  So we may assume $\psi(X,n)$ is in this
form.  Since $\psi(\emp,n)$ holds, there must be a disjunct
$\theta(X)$ of $\psi(X,n)$ that holds of~$\emp$.  Clearly, $\theta(X)$
cannot have a conjunct of the form $t \in X$, $t$ a term.  Therefore,
if we let $F$ be the set of all terms $t$ such $t \notin X$ is a
conjunct of $\theta(X)$, we see that $\theta(A)$ holds whenever $F
\cap A = \emptyset$. This completes the proof of the claim.

Now fix any set~$A$.  By the claim, there is a finite set $F$ such
that $\varphi(A\setminus F)$ holds.  By $\Sigma^0_1$ induction, there
is such an $F$ of smallest size.  Then if $\varphi((A \setminus F)
\cup \{a\})$ holds for some $a \in A$, it cannot be that $a \in F$, as
otherwise $F' = F \setminus \{a\}$ would be a strictly smaller finite
set than $F$ such that $\varphi(A \setminus F')$ holds.  Thus it must
be that $a \in A \setminus F$, and we conclude that $A \setminus F$ is
a $\subseteq$-maximal subset of $A$ of which $\varphi$ holds.
\end{proof}

The above proof contains an implicit non-uniformity in the choice
of $F$ of smallest size.  The following proposition shows that this
non-uniformity is essential, by showing that a sequential form of
$\Sigma^0_1\text{-}\FCP$ is a strictly stronger principle.

\begin{prop}
The following are equivalent over $\RCA$:
\begin{enumerate}
\item $\ACA$;
\item for every family $A = \langle A_i : i \in \N \rangle$ of sets,
and every $\Sigma^0_1$ formula $\varphi(X,x)$ with one free set
variable and one free number variable such that for all $i \in \N$,
the formula $\varphi(X,i)$ is of finite character, there exists a
family $B = \langle B_i : i \in \N \rangle$ of sets such that for all
$i$, $B_i$ is a $\subseteq$-maximal subset of $A_i$ satisfying
$\varphi(X,i)$.
\end{enumerate}
\end{prop}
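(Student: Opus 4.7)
The plan is to prove the two directions separately. For (1) $\Rightarrow$ (2) I will give a uniform version of the argument in Proposition~\ref{P:Sig1_RCA}, and for (2) $\Rightarrow$ (1) I will construct a single $\Sigma^0_1$ formula of finite character whose maximal subsets of a uniform family of singletons encode the range of a one-to-one function.

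For the forward direction, fix the family $A = \langle A_i : i \in \N \rangle$ and the formula $\varphi(X, x)$ given in the hypothesis of~(2). By the argument of Proposition~\ref{P:Sig1_RCA}, for each $i$ there is a finite $F_i \subseteq A_i$ of smallest cardinality such that $\varphi(A_i \setminus F_i, i)$ holds, and then $B_i := A_i \setminus F_i$ is a $\subseteq$-maximal subset of $A_i$ satisfying $\varphi(X, i)$. Via Lemma~\ref{l:finiteset}, the predicate asserting that $n$ is the canonical index of this smallest $F$ is arithmetic in $A$ and $i$, so $\ACA$ produces the sequence $\langle F_i : i \in \N \rangle$. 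The family $B$ is then $\Delta^0_1$-definable from this sequence and from $A$.

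For the reverse direction, let $f \colon \N \to \N$ be an arbitrary one-to-one function; by $\RCA$, it suffices to show $\operatorname{range}(f)$ exists. Set $A_i = \{0\}$ for every $i \in \N$ and define
\[
\varphi(X, i) \;\equiv\; (\exists s)\bigl[\,0 \notin X \;\vee\; f(s) = i\,\bigr],
\]
which is $\Sigma^0_1$. I claim $\RCA$ proves each $\varphi(X, i)$ is of finite character. Certainly $\varphi(\emp, i)$ holds. For the equivalence $\varphi(X, i) \leftrightarrow (\forall\text{ finite } F \subseteq X)\,\varphi(F, i)$, split on whether $0 \in X$: if $0 \notin X$ then both sides hold trivially, and if $0 \in X$ then both sides reduce to $i \in \operatorname{range}(f)$ (using $\{0\} \subseteq X$ on the right). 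Applying~(2) yields $B = \langle B_i : i \in \N \rangle$ with $B_i = \{0\}$ when $i \in \operatorname{range}(f)$ and $B_i = \emp$ otherwise, so $\operatorname{range}(f)$ is $\Delta^0_1$ in $B$.

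The main obstacle is choosing the formula for the reversal: it must simultaneously be $\Sigma^0_1$, be provably of finite character in $\RCA$, and have maximal subsets that encode non-computable information. The example above exploits the heuristic behind Proposition~\ref{P:Sig1_RCA} that a $\Sigma^0_1$ formula of finite character is controlled by a finite ``obstruction''; by arranging that $\varphi(X,i)$ depends on $X$ only through whether $0 \in X$, the finite character property becomes transparent, while the maximality requirement forces $B_i$ to distinguish the two cases of $i$ being in or out of $\operatorname{range}(f)$.
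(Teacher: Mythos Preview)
Your reversal is essentially the paper's: the paper sets $A_i = \{i\}$ and uses $\varphi(X,x) \equiv (\exists y)[x \in X \to f(y) = x]$, which is exactly your formula with the parameter $x$ in place of the constant~$0$.

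For the forward direction the paper takes a different route. Rather than uniformizing Proposition~\ref{P:Sig1_RCA}, it points to a straightforward modification of the proof of Theorem~\ref{thm_main_fcp}(1): one forms the single function $g(i,\tau,j)$ deciding whether $\chat{\varphi}(\{a^i_k : \tau(k)=1\} \cup \{a^i_j\},\, i)$ holds (an arithmetical condition, so $g$ exists in $\ACA$), runs the greedy primitive recursion uniformly in $i$ to obtain $h(i,\cdot)$, and sets $B_i = \{a^i_j : h(i,j)=1\}$. This argument uses nothing about $\Sigma^0_1$ beyond its being arithmetical. Your approach instead exploits the special $\Sigma^0_1$ structure to produce \emph{cofinite} maximal subsets $B_i = A_i \setminus F_i$, which is a pleasant extra feature but needs a little more care than you indicate. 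Proposition~\ref{P:Sig1_RCA} is stated for a single unparametrized formula, and its proof leans on the quantifier bounds being standard; to make it uniform in $i$ you should note that the $\Delta^0_0$ matrix $\psi(X,y,x)$ depends on $X$ only through $X \cap [0,b(y,x)]$ for a fixed term $b$ read off from $\psi$, so that $F_i$ may be taken inside $[0,b(n_i,i)]$ where $n_i$ is the least witness to $\psi(\emptyset,n_i,i)$. Also, Lemma~\ref{l:finiteset} as stated converts $\varphi(D_n)$ to a number predicate, not $\varphi(A_i \setminus D_n,i)$; the substitution you want is analogous but is not literally that lemma.
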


\begin{proof} The forward implication follows by a straightforward
modification of the proof of Theorem~\ref{thm_main_fcp}.  For the
reversal, let a one-to-one function $f \colon \N \to \N$ be given.
For each $i \in \N$, let $A_i = \{i\}$, and let $\varphi(X,x)$ be the
formula
\[ 
(\exists y)[x \in X \to f(y) = x].
\] 
Then, for each $i$, $\varphi(X,i)$ has the finite character
property, and for every set $S$ that contains $i$, $\varphi(S,i)$
holds if and only if $i \in \operatorname{range}(f)$.  Thus, if $B =
\langle B_i : i \in \N \rangle$ is the subfamily obtained by applying
part (2) to the family $A = \langle A_i : i \in \N \rangle$ and the
formula $\varphi(X,x)$, then
\[ 
i \in \operatorname{range}(f) \Leftrightarrow B_i = \{i\}
\Leftrightarrow i \in B_i.
\] 
It follows that the range of $f$ exists.
\end{proof}

\noindent Note that the proposition fails for the class of
bounded-quantifier formulas of finite character in place of the class
of $\Sigma^0_1$ such formulas, since part (2) is then clearly provable
in $\RCA$.  Thus, in spite of the similarity between the two classes
suggested by the proof of Proposition~\ref{P:Sig1_RCA}, the two do not
coincide.

\subsection{Finitary closure operators}\label{S:CE}

We can strengthen $\FCP$ by imposing additional requirements on the
maximal set being constructed. In particular, we now consider
requiring the maximal set to satisfy a finitary closure property as
well as to satisfy a property of finite character.

\begin{defn}\label{D:CE} 
A \textit{finitary closure operator} is a set
of pairs $\langle F, n \rangle$ in which $F$ is (the canonical index
for) a finite (possibly empty) subset of $\N$ and $n \in \N$. A set $A
\subseteq \N$ is \textit{closed} under a finitary closure operator
$D$, or \emph{$D$-closed}, if for every $\langle F, n \rangle \in D$,
if $F \subseteq A$ then $n \in A$.
\end{defn}

\noindent Our definition of a closure operator is not the standard set-theoretic
definition presented by Rubin and Rubin~\cite[Definition 6.3]{RR-1985}.
However, it is easy to see that for each operator of the one kind
there is an operator of the other such that the same sets are closed
under both.  The above definition has the advantage of being readily
formalizable in $\RCA$.

The following fact expresses the monotonicity of finitary closure
operators.

\begin{proposition}\label{p:clmonotone}
If $D$ is a finitary closure
operator and $A_0 \subseteq A_1 \subseteq A_2 \cdots$ is a sequence of
sets such that each $A_i$ is $D$-closed, then $\bigcup_{i \in \N} A_i$
is $D$-closed.
\end{proposition}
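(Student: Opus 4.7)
The plan is to argue directly that $A = \bigcup_{i \in \N} A_i$ is $D$-closed, by reducing any closure requirement for $A$ to one already satisfied inside a single $A_I$. The idea is to use the finiteness of $F$ to bound the witnesses showing that the elements of $F$ lie in the union, and then use monotonicity of the sequence to collect them all into one $A_I$.

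In more detail, I would fix a pair $\langle F, n \rangle \in D$ with $F \subseteq A$, and enumerate $F$ as $\{f_0, \ldots, f_{k-1}\}$. For each $j < k$, the statement $f_j \in A$ unfolds to $(\exists i)[f_j \in A_i]$, so I pick a witness $i_j$ for each such $j$. Setting $I = \max_{j<k} i_j$, the hypothesis $A_0 \subseteq A_1 \subseteq \cdots$ gives $f_j \in A_{i_j} \subseteq A_I$ for every $j < k$, so $F \subseteq A_I$. Since $A_I$ is $D$-closed and $\langle F, n \rangle \in D$, we get $n \in A_I$, and therefore $n \in A$. This shows $A$ is $D$-closed.

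The only point requiring care is the formalization of the witness-bounding step. In $\RCAo$, $F$ comes with a canonical index, so $k = |F|$ is literally a natural number, and the enumeration $f_0, \ldots, f_{k-1}$ is available. The finite sequence $(i_0, \ldots, i_{k-1})$ of least witnesses can be formed by bounded recursion of length $k$, each step picking the least $i$ with $f_j \in A_i$; taking the maximum over a finite sequence is then immediate. So no induction beyond what is already available in $\RCAo$ is needed, and the proof goes through uniformly in $D$ and $\langle A_i : i \in \N \rangle$.

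The main obstacle is essentially only notational: making sure the passage from $f_j \in A$ to a witness $i_j$ respects the conventions of second-order arithmetic. Conceptually, the argument is just the standard one that a directed union of closed sets is closed under any finitary closure operator.
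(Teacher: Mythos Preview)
Your argument is correct and is exactly the standard proof one would expect. Note, however, that the paper does not actually supply a proof of this proposition: it is stated as a basic fact about finitary closure operators and left without justification, so there is nothing substantive to compare against. Your write-up, including the remarks on carrying out the witness-bounding step in $\RCAo$, simply fills in the routine details the authors omitted.
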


The principle in the next definition is analogous to principle
$\mathsf{AL}' \, 3$ of Rubin and Rubin~\cite{RR-1985}, which is
equivalent to the axiom of choice by~\cite[p.~96, and Theorems~6.4 and~6.5]{RR-1985}.

\newcommand{\AL}{\mathsf{CE}}

\begin{defn} 
The following scheme is defined in $\RCAo$.
\begin{list}{\labelitemi}{\leftmargin=0em}\itemsep2pt
\item[]($\AL$) If $D$ is a finitary closure operator, $\varphi$ is a
formula of finite character, and $A$ is any set, then every $D$-closed
subset of $A$ satisfying $\varphi$ is contained in a maximal such
subset.
\end{list}
\end{defn}

\noindent In the terminology of Rubin and Rubin~\cite{RR-1985}, this
is a ``primed'' statement, meaning that it asserts the existence not
merely of a maximal subset of a given set, but the existence of a
maximal \emph{extension} of any given subset.  Primed versions of all of
the principles considered above can be formed, and can easily be seen
to be equivalent to the unprimed ones.  By contrast, $\AL$ has only a
primed form.  This is because if $A$ is a set, $\varphi$ is a formula
of finite character, and $D$ is a finitary closure operator, $A$ need
not have any $D$-closed subset of which $\varphi$ holds.  For example,
suppose $\varphi$ holds only of $\emp$, and $D$ contains a pair of the
form $\langle \emp, a \rangle$ for some $a \in A$.

This leads to the observation that the requirements in the $\AL$
scheme that the maximal set must both be $D$-closed and satisfy a
property of finite character are, intuitively, in opposition to each
other.  Satisfying a finitary closure property is a positive
requirement, in the sense that forming the closure of a set usually
requires adding elements to the set. Satisfying a property of finite
character can be seen as a negative requirement in light of part~(1)
of Proposition~\ref{p:fcmonotone}.

We consider restrictions of $\AL$ as we did restrictions of $\FCP$
above.  By analogy, if $\Gamma$ is a class of formulas, we use the
notation $\Gamma\text{-}\AL$ to denote the restriction of $\AL$ to the
formulas in~$\Gamma$.  We begin with the following analogue of Theorem
\ref{thm_main_fcp}~(1) from the previous subsection.

\begin{thm}\label{T:mainchar_CE}
For $i \in \{0,1\}$ and $n \geq 1 $,
let $\Gamma$ be $\Pi^i_n$, $\Sigma^i_n$, or $\Delta^1_n$.
Then $\Gamma$-$\AL$ is provable in $\Gamma$-$\CA$.
\end{thm}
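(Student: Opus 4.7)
The plan is to adapt the proof of Theorem~\ref{thm_main_fcp}~(1) by closing tentative extensions under $D$ before testing $\varphi$. Fix a $\Gamma$-$\AL$ instance: a set $A$, a finitary closure operator $D$, a $\Gamma$ formula $\varphi(X)$ of finite character, and a $D$-closed $B_0 \subseteq A$ with $\varphi(B_0)$. Enumerate $A = \{a_i : i \in \N\}$. Working in $\Gamma$-$\CA$ (which contains $\ACA$), membership in the $D$-closure $\cl_D(Y)$ of any set $Y$ is expressible as a $\Sigma^0_1$ formula in $D$ and $Y$ (the existence of a finite derivation), so we may speak uniformly of $\cl_D(B_0 \cup F)$ for finite $F$ without forming the closure as a set first.

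Next I would introduce $g \colon 2^{<\N} \times \N \to \{0,1\}$ with $g(\tau,i) = 1$ iff $\cl_D(B_0 \cup F_\tau \cup \{a_i\}) \subseteq A$ and $\varphi$ holds of this closure, where $F_\tau = \{a_j : j < |\tau|,\ \tau(j) = 1\}$. Using the finite character of $\varphi$ together with Lemma~\ref{l:finiteset}, the condition $\varphi(\cl_D(Y))$ can be rephrased as
\[
(\forall F)\bigl[F \subseteq \cl_D(Y) \to \chat{\varphi}(F)\bigr],
\]
which combines a $\Sigma^0_1$ hypothesis with a $\Gamma$ conclusion under a number quantifier. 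A routine complexity count shows that the defining formula of $g$ stays in $\Gamma$ when $\Gamma$ is $\Pi^0_n$, $\Pi^1_n$, or $\Sigma^1_n$, and admits both a $\Sigma^1_n$ and a $\Pi^1_n$ form when $\Gamma = \Delta^1_n$; for $\Gamma = \Sigma^0_n$ any extra arithmetical complexity is absorbed because $\Sigma^0_n$-$\CA$ is equivalent to $\ACA$ for $n \geq 1$. Thus $g$ exists by $\Gamma$-$\CA$, and primitive recursion yields $h \colon \N \to \{0,1\}$ with $h(i) = g(h \res i, i)$.

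With $h$ in hand, set $B_i = \cl_D(B_0 \cup \{a_j : j < i,\ h(j) = 1\})$ and $B = \bigcup_i B_i$. A straightforward induction using the definition of $h$ shows that each $B_i$ is a $D$-closed subset of $A$ of which $\varphi$ holds. Since the $B_i$ form an increasing chain, Proposition~\ref{p:clmonotone} makes $B$ $D$-closed and Proposition~\ref{p:fcmonotone}~(2) gives $\varphi(B)$; both facts are provable in $\ACA$, as noted in the proof of Theorem~\ref{thm_main_fcp}~(1). For maximality, suppose $C$ is a $D$-closed subset of $A$ with $B \subseteq C$ and $\varphi(C)$, and pick any $a_k \in C$. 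Then $\cl_D(B_k \cup \{a_k\}) \subseteq C$, so Proposition~\ref{p:fcmonotone}~(1) ensures that this closure satisfies $\varphi$ and is contained in $A$; therefore $g(h \res k, k) = 1$, giving $h(k) = 1$ and $a_k \in B_{k+1} \subseteq B$.

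The main obstacle is the complexity bookkeeping for $g$. Substituting the $\Sigma^0_1$-defined set $\cl_D(Y)$ directly into $\varphi$ would in general push the defining formula of $g$ out of $\Gamma$; reducing to the finite-character reformulation and then invoking Lemma~\ref{l:finiteset} is precisely what keeps that formula inside $\Gamma$ (or $\Delta^1_n$) so that $\Gamma$-$\CA$ suffices to produce $g$.
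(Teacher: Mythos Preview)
Your proposal is correct and follows essentially the same approach as the paper: form the $\Sigma^0_1$-definable $D$-closure, test tentative extensions via the finite-character reformulation $(\forall F)[F \subseteq \cl_D(Y) \to \chat{\varphi}(F)]$ together with Lemma~\ref{l:finiteset}, use $\Gamma$-$\CA$ to build the selection function, and take the union of the resulting chain of closures. The only notable differences are cosmetic---you enumerate $A$ explicitly and phrase maximality in terms of an arbitrary $D$-closed superset $C$ rather than a single-element extension $B \cup \{i\}$---and your complexity bookkeeping (especially the remark that the $\Sigma^0_n$ case is absorbed by $\Sigma^0_n\text{-}\CA = \ACA$) is spelled out a bit more carefully than the paper's one-line justification.
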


\begin{proof} 
We work in $\Gamma\text{-}\CA$. Let $\varphi$ be a formula of finite character
in $\Gamma$, which may have parameters, and let $D$ be a finitary closure
 operator. Let $A$ be any set and
let $C$ be a $D$-closed subset of $A$ such that $\varphi(C)$ holds.
  
For any $X \subseteq A$, let $\cl_D(X)$ denote the \emph{$D$-closure}
of $X$.  That is, $\cl_D(X) = \bigcup_{i \in \N} X_i$, where
$X_0 = X$ and for each $i \in \N$, $X_{i+1}$ is the set of all $n \in
\N$ such that either $n \in X_i$ or there is a finite set $F \subseteq
X_i$ such that $\langle F,n \rangle \in D$.  Because we take
$D$ to be a set, $\cl_D(X)$ can be defined using a $\Sigma^0_1$ formula with 
parameter $D$.
Define a formula $\psi(\sigma, X)$ by
\begin{align*}
\psi(\sigma, X)  \Leftrightarrow {} 
& (\forall n)[ ( D_n \subseteq \cl_D(X
\cup \{i : \sigma(i) = 1\}) ) \to \chat{\varphi}(n)] \\
&\wedge \cl_D(X \cup \{i : \sigma(i) = 1\}) \subseteq A,
\end{align*}
where $\chat{\varphi}$ is as in Lemma~\ref{l:finiteset}. Note that 
$\psi$ is
arithmetical if $\Gamma$ is $\Pi^0_n$ or $\Sigma^0_n$, and is in
$\Gamma$ otherwise.

Define the function $f \colon \N \to \{0,1\}$ inductively such that $f(i)
= 1$ if and only if $\psi(\{j < i : f(j) = 1\} \cup \{i\}, C)$ holds.
The characterization of the complexity of $\psi$ ensures that $f$ can be constructed using $\Gamma$ comprehension. Now let
\[ 
B_i = \cl_D(C \cup \{ j < i : f(j) = 1\})
\]
for each $i \in \N$, and let $B = \bigcup_{i \in \N} B_i$.  The
construction of $f$ ensures that $\varphi(B_i)$ implies
$\varphi(B_{i+1})$ for all~$i$, and we have assumed that $\varphi$ holds of
$B_0 = \cl_D(C) = C$. Therefore, an instance of induction
shows that $\varphi$ holds of $B_i$ for all $i \in \N$, and thus also
of $B$ by Proposition~\ref{p:fcmonotone}.  This also shows that $B
\subseteq A$.  Similarly, because each $B_i$ is $D$-closed, the
formalized version of Proposition~\ref{p:clmonotone} implies $B$ is
$D$-closed.

Finally, we check that $B$ is a maximal $D$-closed extension of $C$ in
$A$ of which $\varphi$ holds. Suppose that for some $i \in A$, $B \cup
\{i\}$ is $D$-closed and $\varphi(B \cup \{i\})$ holds.  Then since
$B_i \subseteq B$, we have $\cl_D(B_i \cup \{i\}) \subseteq B \cup
\{i\}$.  Thus $\varphi(F)$ holds for every finite subset $F$ of
$\cl_D(B_i \cup \{i\})$, so by definition $f(i) = 1$ and $B_{i+1} =
\cl_D(B_i \cup \{i\})$.  Here we are using the fact that for all sets
$X$ and all $a,b \in \N$, $\cl_D(X \cup \{a,b\}) = \cl_D(\cl_D(X \cup
\{a\}) \cup \{b\})$.  Since $B_{i+1} \subseteq B$, we conclude that $i
\in B$, as desired.
\end{proof}

It follows that for most standard classes $\Gamma$,
$\Gamma\text{-}\AL$ is equivalent to $\Gamma\text{-}\FCP$.  Indeed,
for any class $\Gamma$ we have that $\Gamma\text{-}\AL$ implies
$\Gamma\text{-}\FCP$, because any instance of the latter can be regarded
as an instance of the former by adding an empty finitary closure
operator.  And if $\Gamma$ is $\Pi^0_n$, $\Pi^1_n$, $\Sigma^1_n$, or
$\Delta^1_n$, then $\Gamma\text{-}\FCP$ is equivalent to
$\Gamma\text{-}\CA$ by Theorem~\ref{thm_main_fcp}~(2), and hence
reverses to $\Gamma\text{-}\AL$.  Thus, in particular, parts (2)--(5)
of Corollary~\ref{c:fcpstrength} hold for $\AL$ in place of $\FCP$,
and the full scheme $\AL$ itself is equivalent to $\mathsf{Z}_2$.

The proof of the preceding theorem does not work for $\Gamma =
\Delta^0_1$, because then $\Gamma\text{-}\CA$ is just $\RCA$, and we
need at least $\ACA$ to prove the existence of the function $f$
defined there (the formula $\psi(\sigma,X)$ being arithmetical at
best).  The next proposition shows that this cannot be avoided, even
for a class of considerably weaker formulas.

\begin{prop}\label{P:alqf_implies_aca} 
$\QF\text{-}\AL$ implies $\ACA$ over $\RCA$.
\end{prop}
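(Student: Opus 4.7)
The plan is to use $\QF\text{-}\AL$ to form the range of an arbitrary one-to-one function $f\colon \N \to \N$, which is equivalent to $\ACA$ over $\RCA$. I will arrange a specific instance of $\QF\text{-}\AL$ whose maximal set $B$ encodes $\operatorname{range}(f)$ via a $\Delta^0_1$ predicate.

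Fix a one-to-one $f$. Take $A = \N$ and $\varphi(X) \equiv \neg(0 \in X)$; this formula is quantifier-free, has no parameters, and is of finite character. Define the finitary closure operator $D$ to consist of the pairs
\begin{itemize}
\item $\langle \emp, 2s+1\rangle$ for each $s \in \N$, and
\item $\langle \{2n+2,\, 2s+1\}, 0\rangle$ for each $(n,s) \in \N \times \N$ with $f(s) = n$.
\end{itemize}
Think of $2s+1$ as a ``stage-$s$ marker'' and $2n+2$ as a ``flag for $n$.'' The critical point is that $D$ is a set in $\RCA$: membership of a candidate pair $\langle F, k\rangle$ is decidable uniformly in $f$ by examining $F$ and, in the two-element case, evaluating $f(s)$ at a specific $s$ read off from the encoding.

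Let $C = \{2s+1 : s \in \N\}$, which exists by $\Delta^0_0$-comprehension. One checks that $C$ is $D$-closed (the first type of pair has its conclusion in $C$, while for the second, $2n+2 \notin C$ so $F \not\subseteq C$) and that $\varphi(C)$ holds. Applying $\QF\text{-}\AL$ yields a maximal $D$-closed set $B$ with $C \subseteq B \subseteq A$ and $0 \notin B$. The heart of the argument is the equivalence
\[
 2n+2 \in B \Leftrightarrow n \notin \operatorname{range}(f).
\]
For the forward direction: if $f(s_0) = n$, then $\langle \{2n+2,\, 2s_0+1\}, 0\rangle \in D$ with $2s_0+1 \in C \subseteq B$, so $2n+2 \in B$ would force $0 \in B$ by $D$-closure, contradicting $\varphi(B)$. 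For the converse: if $n \notin \operatorname{range}(f)$, then no pair in $D$ has $2n+2$ in its $F$-part, so $B \cup \{2n+2\}$ remains $D$-closed and still omits $0$; maximality of $B$ then forces $2n+2 \in B$.

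Consequently, $\operatorname{range}(f) = \{n : 2n+2 \notin B\}$, which exists by $\Delta^0_1$-comprehension from $B$, completing the reversal. The main design constraint—and the feature that makes the encoding delicate—is keeping $D$ a genuine set in $\RCA$ rather than merely $\Sigma^0_1$-definable: this is why the antecedent of each flag-triggering pair must include both $2n+2$ and a concrete stage marker $2s+1$, so that testing membership reduces to evaluating $f$ at the specific input $s$. This same stage-based encoding is what makes the final read-off of $\operatorname{range}(f)$ from $B$ a $\Delta^0_1$ predicate, as required for $\Delta^0_1$-comprehension.
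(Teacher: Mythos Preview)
Your proof is correct and follows essentially the same strategy as the paper: both use the quantifier-free formula $0 \notin X$ and design a closure operator so that a ``flag'' element for $n$ survives in the maximal set precisely when $n \notin \operatorname{range}(f)$, with the range then recovered by $\Delta^0_1$ comprehension. The only difference is the encoding---the paper uses chains of prime powers (with rules $\{p_i^{n+1}\}\mapsto p_i^{n+2}$ and back, plus $\{p_i^{n+1}\}\mapsto 0$ when $f(n)=i$) while you use an even/odd split with stage markers forced in from the start; your version is arguably a bit more direct since it avoids the prime-power machinery, though the underlying idea is the same.
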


\begin{proof}
Assume a one-to-one function $f : \N \to \N$ is given.
Let $\varphi(X)$ be the quantifier-free formula $0 \notin X$, which
trivially has finite character, and let \mbox{$\langle p_i: i \in \N
\rangle$} be an enumeration of all primes.  Let $D$ be the finitary
closure operator consisting, for all $i, n \in \N$, of all pairs of
the form
\begin{itemize}
\item $\langle \{p_i^{n+1}\},p_i^{n+2} \rangle$;

\item $\langle \{p_i^{n+2}\},p_i^{n+1} \rangle$;

\item $\langle \{p_i^{n+1}\},0 \rangle$, if $f(n) = i$.
\end{itemize} 
Notice that $D$ exists by $\Delta^0_1$ comprehension
relative to $f$ and our enumeration of primes.

Note that $\emp$ is a $D$-closed subset of $\N$ and $\varphi(\emp)$
holds.  Thus, we may apply $\AL$ for quantifier-free formulas to
obtain a maximal $D$-closed subset $B$ of $\N$ such that $\varphi(B)$
holds.  Then by definition of $D$, for every $i \in \N$, $B$ either
contains every positive power of $p_i$ or no positive power.  Now if
$f(n) = i$ for some $n$, then no positive power of $p$ can be in $B$,
since otherwise $p^{n+1}$ would necessarily be in $B$ and hence so
would~$0$.  On the other hand, if $f(n) \neq i$ for all $n$ then $B
\cup \{p_i^{n+1} : n \in \N \}$ is $D$-closed and satisfies $\varphi$,
so by maximality $p^{n+1}_i$ must belong to $B$ for every~$n$.  It
follows that $i \in \operatorname{range}(f)$ if and only if $p_i \in
B$, so the range of $f$ exists.
\end{proof}

Thus we are able to separate $\AL$ from $\FCP$ at least in terms of
some of their strictest restrictions.  In contrast to Corollary
\ref{c:fcpstrength}~(1) and Proposition~\ref{P:Sig1_RCA}, we
consequently have:

\begin{cor}\label{c:alequiv} 
The following are equivalent over $\RCA$:
\begin{enumerate}
\item $\ACA$;
\item $\Sigma^0_1\text{-}\AL$;
\item $\Sigma^0_0\text{-}\AL$;
\item $\QF\text{-}\AL$.
\end{enumerate}
\end{cor}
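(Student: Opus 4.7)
The plan is to close the loop $(1) \Rightarrow (2) \Rightarrow (3) \Rightarrow (4) \Rightarrow (1)$ from results already at hand in this subsection.

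For $(1) \Rightarrow (2)$, I would apply Theorem~\ref{T:mainchar_CE} with $\Gamma = \Sigma^0_1$, which gives that $\Sigma^0_1\text{-}\AL$ is provable in $\Sigma^0_1\text{-}\CA$. Combining this with the standard fact that $\Sigma^0_1\text{-}\CA$ and $\ACA$ coincide over $\RCA$ (one iteratively reduces arithmetical comprehension to $\Sigma^0_1$ comprehension by peeling off outer quantifiers and inserting the sets obtained at previous stages as parameters) then yields that $\ACA$ proves $\Sigma^0_1\text{-}\AL$.

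The implications $(2) \Rightarrow (3) \Rightarrow (4)$ are immediate and syntactic: every quantifier-free formula is in $\Sigma^0_0$, every $\Sigma^0_0$ formula is trivially $\Sigma^0_1$, and allowing additional parameters only broadens the scheme, so any instance of the more restrictive scheme is also an instance of the more permissive one. Hence the more permissive scheme suffices to produce the required maximal subset in each case.

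The implication $(4) \Rightarrow (1)$ is exactly Proposition~\ref{P:alqf_implies_aca}, in which the finitary closure operator built from an arbitrary one-to-one $f$ forces the maximal subset satisfying the quantifier-free predicate $0 \notin X$ to encode $\operatorname{range}(f)$. There is no real obstacle in any step: $(1) \Rightarrow (2)$ is delivered by the previously proved theorem (modulo the well-known equivalence of $\Sigma^0_1\text{-}\CA$ with $\ACA$), the two middle implications are immediate from the inclusion of the defining formula classes, and the reversal $(4) \Rightarrow (1)$ is a direct citation.
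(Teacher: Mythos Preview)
Your proof is correct and matches the paper's intended argument: the paper states this corollary immediately after Proposition~\ref{P:alqf_implies_aca} without an explicit proof, treating it as a direct consequence of Theorem~\ref{T:mainchar_CE} (for the forward direction through $\Sigma^0_1\text{-}\CA = \ACA$) together with the trivial inclusions of formula classes and the reversal in Proposition~\ref{P:alqf_implies_aca}.
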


We conclude this subsection with one additional illustration of how
formulas of finite character can be used in conjunction with finitary
closure operators.  Recall the following concepts from order theory:
\begin{itemize}
\item A \textit{countable join-semilattice} is a countable poset
$\langle L, \leq_L \rangle $ with a maximal element $1_L$ and an
operation $\lor_L \colon L \times L \to L$ such that for all $a,b \in
L$, $a \lor_L b$, called the \emph{join} of $a$ and $b$, is the least
upper bound of $a$ and~$b$.

\item An \textit{ideal} on a countable join-semilattice $L$ is a
subset $I$ of $L$ that is downward closed under $\leq_L$ and closed
under $\lor_L$.

\end{itemize} 
The principle in the following proposition is the
countable analogue of a variant of $\mathsf{AL}' \, 1$ in Rubin and
Rubin~\cite{RR-1985}; compare with Proposition~\ref{P:NCE_ideals}
below.  For more on the computability theory of ideals on lattices,
see Turlington~\cite{Turlington-2010}.

\begin{prop}\label{p:alextend}
Over $\RCA$, $\QF\text{-}\AL$ implies that every proper
ideal on a countable join-semilattice extends to a maximal proper ideal.
\end{prop}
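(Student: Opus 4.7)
The plan is to reduce the statement to a single application of $\QF\text{-}\AL$ in which both the ``ideal'' requirement and the ``proper'' requirement are absorbed into the closure operator and the base set, leaving the formula $\varphi$ trivially true. Given a countable join-semilattice $\langle L, \leq_L, \lor_L, 1_L\rangle$ and a proper ideal $I$ on $L$, first form $A = L\setminus\{1_L\}$, which exists by $\Delta^0_1$ comprehension since $1_L$ is part of the data, and then form the finitary closure operator
\[
D = \{\langle\{b\},a\rangle : a,b\in L,\ a\leq_L b\}\ \cup\ \{\langle\{a,b\},a\lor_L b\rangle : a,b\in L\},
\]
which also exists by $\Delta^0_1$ comprehension from the presentation of~$L$. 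Finally take $\varphi(X)$ to be the quantifier-free parameter-free tautology $0=0$, which trivially has finite character.

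The key lemma to record is that the $D$-closed subsets of $A$ are \emph{exactly} the proper ideals of~$L$. Any $D$-closed $X\subseteq A$ is downward closed in $L$ (from the singleton pairs) and closed under binary joins (from the two-element pairs), and it omits $1_L$ since $X\subseteq A$, hence is a proper ideal; conversely, any proper ideal $J$ of $L$ lies inside $A$ and is $D$-closed. In particular, $I$ itself is a $D$-closed subset of $A$ of which $\varphi$ holds, so $\QF\text{-}\AL$ yields a $\subseteq$-maximal $D$-closed subset $B$ of $A$ with $\varphi(B)$ and $I\subseteq B$. By the lemma, $B$ is a proper ideal of $L$ extending $I$; and if $J$ is any proper ideal with $B\subseteq J\subseteq L$, then $J\subseteq A$ and $J$ is $D$-closed in $A$ with $\varphi(J)$, so maximality of $B$ forces $J=B$, confirming that $B$ is a maximal proper ideal.

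There is no real obstacle: the construction simply translates the order-theoretic notion of proper ideal into the combinatorial language of closure operators on a fixed ground set. The only point that needs care is the parameter restriction built into $\QF\text{-}\AL$, which is why properness must be encoded geometrically (by removing $1_L$ from the ambient set $A$) rather than by a formula referring to the parameter $1_L$; once this is done, the formula $\varphi$ may be taken to be a parameter-free tautology and the rest is bookkeeping.
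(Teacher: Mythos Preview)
Your proposal is correct and follows essentially the same route as the paper: the closure operator $D$ you write down is identical to the paper's, and the identification of $D$-closed subsets with ideals is the same key observation. The only difference is cosmetic: the paper takes the ambient set to be $L$ and encodes properness via the formula $\varphi(X)\equiv 1\notin X$, whereas you shift the constraint into the ambient set $A=L\setminus\{1_L\}$ and let $\varphi$ be a tautology. Your choice has the mild advantage that it visibly avoids any concern about whether $1_L$ counts as a parameter in $\QF\text{-}\AL$ (the paper tacitly assumes $1_L$ is coded by a fixed standard numeral), but the two encodings are interchangeable and the underlying reduction is the same.
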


\begin{proof}
Let $L$ be a countable join-semilattice.  Let $\varphi$
be the formula $1 \not \in X$, and let $D$ be the finitary closure
operator consisting of all pairs of the form
\begin{itemize}
\item $\langle \{a,b\}, c\rangle$ where $a,b \in L$ and $c = a \lor b$;
\item $\langle \{a\}, b\rangle$, where $b \leq_L a$.
\end{itemize}
Because we define a join-semilattice to come with both
the order relation and the join operation, the set $D$ is $\Delta^0_0$
with parameters, so $\RCAo$ proves $D$ exists. It is immediate that a
set $X$ is closed under $D$ if and only if $X$ is an ideal in~$L$.
\end{proof}

\subsection{Nondeterministic finitary closure operators}\label{S:NCE}

It appears that the underlying reason that the restriction of $\AL$
to arithmetical formulas is provable in $\ACAo$ (and more generally,
why $\Gamma\text{-}\AL$ is provable in $\Gamma\text{-}\CA$ if $\Gamma$
is as in Theorem~\ref{T:mainchar_CE}) is that our definition of
finitary closure operator is very constraining.  Intuitively, if $D$
is such an operator and $\varphi$ is an arithmetical
formula, and we seek to extend some $D$-closed subset 
$B$ satisfying $\varphi$ to a maximal
such subset, we can focus largely on ensuring 
that $\varphi$ holds.  Achieving closure under $D$ is
relatively straightforward, because at each stage we only need to search through
all finite subsets $F$ of our current extension, and then adjoin all $n$
such that $\langle F,n \rangle \in D$.  This closure process becomes
far less trivial if we are given a choice of which elements
to add. We now consider the case when each finite subset $F$
can be associated with a possibly infinite set of numbers  
from which we must choose at least one to adjoin. We will show that this weaker
notion of closure operator leads to a stronger analogue of $\AL$.

\begin{defn}\label{D:NCE}
A \textit{nondeterministic finitary closure operator} is a sequence 
of sets of the form $\langle F, S\rangle$ where $F$ is (the canonical index for)
a finite (possibly empty) subset 
of $\N$ and $S$ is a nonempty subset of~$\N$. A set $A \subseteq \N$ 
is \textit{closed} under a nondeterministic finitary closure operator 
$N$, or $N$-closed, if for each $\langle F, S \rangle$ in $N$, if 
$F \subseteq A$ then $A \cap S \neq \emp$.
\end{defn}

Note that if $D$ is a \emph{deterministic} finitary closure operator,
that is, a finitary closure operator in the stronger sense of the
previous subsection, then for any set $A$ there is a unique
$\subseteq$-minimal $D$-closed set extending~$A$. This is not true for
nondeterministic finitary closure operators. Let $N$ be the 
operator such that $\langle\emptyset,\N\rangle \in N$ and, for each $i
\in \N$ and each $j > i$, $\langle\{i\},\{j\}\rangle \in N$.  Then any
$N$-closed set extending $\emptyset$ will be of the form $\{i \in \N :
i \geq k\}$ for some~$k$, and any set of this form is $N$-closed. Thus
there is no $\subseteq$-minimal $N$-closed set.

In this subsection we study the following nondeterministic version of
$\AL$.

\begin{defn}
The following scheme is defined in $\RCAo$.
\begin{list}{\labelitemi}{\leftmargin=0em}\itemsep2pt
\item[]($\mathsf{NCE}$) If $N$ is a nondeterministic closure operator,
$\varphi$ is a formula of finite character, and $A$ is any set, then
every $N$-closed subset of $A$ satisfying $\varphi$ is contained in a
maximal such subset.
\end{list}
\end{defn}

\noindent Restrictions of $\NCE$ to various syntactical classes of formulas are
defined as for $\AL$ and $\FCP$. Note that, because the union of a
chain of $N$-closed sets is again $N$-closed, $\NCE$ can be
proved in set theory using Zorn's lemma.

\begin{rem}\label{rem:nce_remark} 
We might expect to be able to prove
$\NCE$ from $\AL$ by suitably transforming a given nondeterministic
finitary closure operator $N$ into a deterministic one.  For instance,
we could go through the members of $N$ one by one, and
for each such member $\langle F,S \rangle$ add $\langle F, n \rangle$
to $D$ for some $n \in S$ (e.g., the least $n$).  All $D$-closed sets
would then indeed be $N$-closed.  The converse, however, would not
necessarily be true, because a set could have $F$ as a subset for some
$\langle F,S \rangle \in N$, yet it could contain a different $n \in
S$ than the one chosen in defining~$D$.  In particular, a maximal
$D$-closed subset (of some given set) would not need to be maximal
among $N$-closed subsets.
\end{rem}

The following result provides a simple but concrete example of this
point.  Recall that an \emph{ideal} on a countable poset $\langle P,
\leq_P \rangle$ is a subset $I$ of $P$ downward closed under $\leq_P$
and such that for all $p,q \in I$ there is an $r \in I$ with $p \leq_P
r$ and $q \leq_P r$.  The next proposition is similar to
Proposition~\ref{p:alextend} above, which dealt with ideals on
countable join-semilattices.  In the proof of that proposition, we
defined a deterministic finitary closure operator $D$ in such a way
that $D$-closed sets were closed under the join operation.  For this
we relied on the fact that for every two elements in the semilattice
there is a unique element that is their join.  The reason we need
nondeterministic finitary closure operators below is that, for ideals
on countable posets, there are no longer unique elements witnessing
closure under the relevant operations. 

\begin{prop}\label{P:NCE_ideals} 
Over $\RCAo$,
$\Pi^0_2\text{-}\mathsf{NCE}$ implies that every ideal on a countable
poset can be extended to a maximal ideal.
\end{prop}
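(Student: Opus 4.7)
The plan is to reduce the statement to an application of $\Pi^0_2\text{-}\NCE$ by producing a nondeterministic finitary closure operator~$N$ and a $\Pi^0_2$ formula~$\varphi$ of finite character so that the $N$-closed subsets of~$P$ on which~$\varphi$ holds are exactly the ideals of~$P$.  Given a countable poset $\langle P, \leq_P\rangle$ with ideal~$I_0$, I will then apply $\Pi^0_2\text{-}\NCE$ to~$N$, $\varphi$, and $A = P$ with starting set~$I_0$.

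For the operator I put into~$N$ pairs of two kinds: (i)~$\langle\{p\},\{q\}\rangle$ for each $q \leq_P p$, forcing downward closure; and (ii)~$\langle\{p,q\}, U_{p,q}\rangle$ for each $p, q$ admitting a common upper bound in~$P$, where $U_{p,q} = \{r \in P : p \leq_P r \wedge q \leq_P r\}$, forcing directedness.  The set~$U_{p,q}$ is $\Delta^0_1$ in $p, q$ and~$\leq_P$, but the condition that $U_{p,q}$ be nonempty is only $\Sigma^0_1$; to form~$N$ as a $\Delta^0_1$ sequence in~$\RCAo$ I enumerate all triples $(p,q,r)$ and include the kind-(ii) pair at position $(p,q,r)$ precisely when~$r$ is observed to be a common upper bound of~$p, q$, padding the remaining positions with a trivial pair such as $\langle\{p\},\{p\}\rangle$.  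For the formula I take
\[
  \varphi(X) \equiv (\forall p, q \in X)(\exists r \in P)[p \leq_P r \wedge q \leq_P r],
\]
which is $\Pi^0_2$ and has finite character because its constraint is stated entirely pairwise.

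The hypotheses of $\Pi^0_2\text{-}\NCE$ are then immediate: $I_0$ is $N$-closed by downward closure and directedness of~$I_0$ (the latter yielding $I_0 \cap U_{p,q} \neq \emp$ whenever $p, q \in I_0$), and $\varphi(I_0)$ holds by directedness of~$I_0$.  The resulting $\subseteq$-maximal $N$-closed $J \subseteq P$ with $I_0 \subseteq J$ and $\varphi(J)$ is a maximal ideal of~$P$: downward closure comes from the kind-(i) pairs; directedness follows because for any $p, q \in J$, $\varphi(J)$ supplies $r \in P$ with $p, q \leq_P r$, making $\langle\{p,q\}, U_{p,q}\rangle$ a genuine member of~$N$ and forcing $J$ to meet~$U_{p,q}$; and any strict ideal extension~$J^*$ of~$J$ in~$P$ would itself be $N$-closed and satisfy~$\varphi$, contradicting the $\subseteq$-maximality of~$J$ given by~$\NCE$.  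The main obstacle is the formalization of~$N$ in~$\RCAo$, as the naive definition depends on the $\Sigma^0_1$ predicate ``$U_{p,q} \neq \emp$''; the triple-enumeration trick above sidesteps this, after which the rest of the argument is routine bookkeeping on ideals.
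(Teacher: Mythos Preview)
Your proposal is correct and matches the paper's proof essentially line for line: the paper uses the same $\Pi^0_2$ formula $\varphi(X)$ asserting that every two elements of $X$ have a common upper bound in $P$, the same two kinds of closure pairs (downward closure and directedness), and the same triple-indexing trick—putting $\langle\{p_j,p_k\},U_{p_j,p_k}\rangle$ at index $2\langle j,k,l\rangle+1$ whenever $p_l$ witnesses $U_{p_j,p_k}\neq\emp$, with trivial padding otherwise—to make $N$ exist in $\RCAo$. Your write-up is in fact more explicit than the paper's about why the triple trick is needed and about the maximality verification, which the paper leaves as ``a straightforward proof.''
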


\begin{proof} 
We work in $\RCAo$. Let $\langle P, \leq_P \rangle$ be a
countable poset. Without loss of generality we may assume $P = \{ p_i
: i \in \N\}$ is infinite.  We form a nondeterministic closure
operator $N= \langle N_i : i \in \N\rangle$ by considering the
following two cases. For each $i \in \N$,
\begin{itemize}
\item if $i = 2\langle j,k\rangle$ and $p_j \leq_P p_k$, let $N_i =
\langle \{p_k\},\{p_j\}\rangle$;

\item if $i = 2\langle j,k,l\rangle + 1$ and $p_j \leq_P p_l$ and $p_k
\leq_P p_l$, let
\[ 
N_i = \langle \{p_j,p_k\}, \{p_n : (p_j \leq_P p_n) 
   \land (p_k \leq_P p_n)\}\rangle;
\]

\item otherwise, let $N_i = \langle\{p_i\},\{p_i\}\rangle$.

\end{itemize}
This construction gives a quantifier-free definition of
each $N_i$ uniformly in $i$, so the sequence $N$ exists.

Let $\varphi(X)$ be the $\Pi^0_2$ formula which says that every pair
of elements in $X$ has a common upper bound in~$P$. A straightforward
proof shows that $\varphi$ is of finite character and that a set $I
\subseteq P$ is an ideal on $P$ if and only if $I$ is $N$-closed and
$\varphi(I)$ holds.
\end{proof}

Mummert~\cite[Theorem~2.4]{Mummert-2006} showed that the proposition
that every ideal on a countable poset extends to a maximal ideal is
equivalent to $\Pi^1_1\text{-}\CA$ over $\RCA$.  Hence,
$\Pi^0_2\text{-}\mathsf{NCE}$ implies $\Pi^1_1\text{-}\CA$.  By
Theorem~\ref{T:mainchar_CE}, $\Pi^0_2\text{-}\AL$ is provable in
$\ACA$, so we see that the idea of Remark~\ref{rem:nce_remark}
fundamentally cannot work.

We will obtain the reversal of $\Pi^0_2\text{-}\NCE$ to
$\Pi^1_1\text{-}\CA$ in a sharper form in Theorem~\ref{t:ncereverse}
below.  First, we prove the following upper bound.  The proof uses a
technique involving countable coded $\beta$-models, parallel to
Lemma~2.4 of Mummert~\cite{Mummert-2006}.  In $\RCA$, a
\emph{countable coded $\beta$-model} is defined as a sequence $\M =
\langle M_i : i \in \N \rangle$ of subsets of $\N$ such that for every $\Sigma^1_1$
formula $\varphi$ with parameters from $\M$, $\varphi$ holds if and
only if $\M \models \varphi$ \cite[Definitions VII.2.1 and
VII.2.3]{Simpson-2009}.  A general treatment of countable coded
$\beta$-models is given by Simpson~\cite[Section~VII.2]{Simpson-2009}.

\begin{prop}\label{p:nceprovable} 
$\Sigma^1_1\text{-}\mathsf{NCE}$ is provable in $\Pi^1_1\text{-}\CA$.
\end{prop}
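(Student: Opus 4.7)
Following the strategy of \cite[Lemma~2.4]{Mummert-2006}, we work in $\Pi^1_1$-$\CA$. Given an instance of $\Sigma^1_1$-$\NCE$---a nondeterministic finitary closure operator $N$, a $\Sigma^1_1$ formula $\varphi(X)$ of finite character, a set $A$, and an $N$-closed $C \subseteq A$ with $\varphi(C)$---invoke $\Pi^1_1$-$\CA$ (cf.~\cite[Theorem~VII.2.10]{Simpson-2009}) to form a countable coded $\beta$-model $\M$ that contains $N$, $A$, $C$, and all set parameters of $\varphi$. The central observation is that the formula
\[
  \Psi(Z) \;\equiv\; (\exists X)\bigl[Z \subseteq X \subseteq A \wedge X\text{ is $N$-closed} \wedge \varphi(X)\bigr]
\]
is $\Sigma^1_1$ in $Z$ and the fixed parameters, so by $\beta$-absoluteness $\Psi(Z)$ and its negation are correctly decided in $\M$ for any parameter $Z \in \M$. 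Moreover, as a $\beta$-model of $\RCAo$, $\M$ satisfies $\Sigma^1_1$-$\mathsf{AC}$, which suffices to carry out the following recursion.

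Inside $\M$, enumerate $A = \{a_i : i \in \om\}$ and $N = \{\langle F_j, S_j \rangle : j \in \om\}$, and build an increasing chain $C = B_0 \subseteq B_1 \subseteq \cdots$, each $B_{i+1} \setminus B_i$ of size at most one, preserving the invariant $\Psi(B_i)$ (which holds at the base stage with witness $X = C$). At stage $i$, interleave two tasks: \emph{(a)} given $a_i$, set $B_{i+1} = B_i \cup \{a_i\}$ if $\Psi(B_i \cup \{a_i\})$ holds in $\M$, else $B_{i+1} = B_i$; and \emph{(b)} for the next outstanding $\langle F_j, S_j\rangle$ with $F_j \subseteq B_i$ and $B_i \cap S_j = \emp$, set $B_{i+1} = B_i \cup \{s\}$ for the least $s \in S_j$ with $\Psi(B_i \cup \{s\})$ true. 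The witnessing $s$ at stage~(b) exists because any $X$ witnessing $\Psi(B_i)$ is $N$-closed with $F_j \subseteq B_i \subseteq X$, forcing $X \cap S_j \ne \emp$. Finally, put $B = \bigcup_i B_i \in \M$.

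For verification, work in the real world, using $\beta$-absoluteness to transfer assertions about sets in $\M$. Arithmetical properties ($C \subseteq B \subseteq A$ and $N$-closure of $B$) follow directly from the construction. By absoluteness of each $\Psi(B_i)$, every $B_i$ is contained in some $X_i$ with $\varphi(X_i)$; since $\varphi$ has finite character in the real world, every finite subset of any $B_i$, and hence of $B$, satisfies $\varphi$, so $\varphi(B)$ holds. Maximality is $\Pi^1_1$ in parameters in $\M$ and thus also absolute: if a proper $N$-closed extension $B' \supsetneq B$ with $\varphi(B')$ existed, then for any $a_i \in B' \setminus B$, $B'$ would witness $\Psi(B_i \cup \{a_i\})$ at stage~(a), forcing $a_i \in B$ and giving a contradiction. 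The main obstacle is ensuring the internal construction is well-defined in $\M$: one must use $\Sigma^1_1$-$\mathsf{AC}$ inside $\M$ to answer the $\Psi$-queries uniformly and produce the sequence as a single set, and one must take care that the finite-character hypothesis on $\varphi$---itself a $\Pi^1_2$ statement that need not be absolute for $\beta$-models---is invoked only externally, applied to the real-world witnesses $X_i$.
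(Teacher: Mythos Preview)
Your core strategy---maintain the invariant $\Psi(B_i)$ and add elements one at a time, interleaving maximality steps~(a) with $N$-closure steps~(b)---is sound, but the execution has a gap. You claim the recursion can be carried out \emph{inside} $\M$ using $\Sigma^1_1$-$\mathsf{AC}$, yet each step must \emph{decide} a $\Sigma^1_1$ question (whether $\Psi(B_i\cup\{a\})$ holds), and the next step depends on that answer. This is a comprehension/recursion issue, not a choice issue: $\Sigma^1_1$-$\mathsf{AC}$ lets you uniformly pick witnesses when they exist, but it does not let you form the set of pairs $\langle n,a\rangle$ for which $\Psi(C\cup D_n\cup\{a\})$ holds, and without that oracle the dependent recursion cannot be defined inside a mere $\beta$-model (which need not satisfy $\Pi^1_1$-$\CA$ internally).

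The fix is easy and in fact eliminates the need for the $\beta$-model altogether. Work externally in $\Pi^1_1$-$\CA$: since $\Psi$ is $\Sigma^1_1$, form $P=\{\langle n,a\rangle : \Psi(C\cup D_n\cup\{a\})\}$ by $\Sigma^1_1$ comprehension. Relative to $P$, $A$, $N$, $C$, your recursion is purely arithmetical, so the sequence $\langle B_i\rangle$ and $B=\bigcup_i B_i$ exist. Your verifications of $N$-closure, $\varphi(B)$, and maximality then go through exactly as written, with no appeal to $\M$ or absoluteness: for maximality, the hypothetical $B'$ directly witnesses $\Psi(B_k\cup\{a_i\})$ in the real world, so $\langle\text{code}(F_k),a_i\rangle\in P$ and $a_i$ was added.

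This is a genuinely different route from the paper's. The paper \emph{does} use the $\beta$-model essentially: it first forms $\{j:\varphi(M_j)\}$ via $\Pi^1_1$ comprehension, then at each stage searches through the enumerated sets $M_j\in\M$ for one that is already $N$-closed, extends $B_i$, contains $i$, and satisfies $\varphi$, setting $B_{i+1}=M_j$. Thus every $B_i$ is itself $N$-closed with $\varphi(B_i)$, so no separate closure-enforcing steps are needed and the verification follows immediately from the chain lemmas (Propositions~\ref{p:fcmonotone} and~\ref{p:clmonotone}). The $\beta$-model is precisely what guarantees that whenever such an extension exists anywhere, one exists among the $M_j$. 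Your corrected argument trades this for the weaker extensibility invariant $\Psi(B_i)$, which dispenses with $\M$ at the cost of the interleaved closure steps~(b).
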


\begin{proof}
We work in $\Pi^1_1\text{-}\CA$. Let $\varphi$ be a
$\Sigma^1_1$ formula of finite character (possibly with parameters)
and let $N$ be a nondeterministic closure operator. Let $A$ be any set
and let $C$ be an $N$-closed subset of $A$ such that $\varphi(C)$
holds.

Let $\M = \langle M_i : i \in \N\rangle$ be a countable coded
$\beta$-model containing $A$, $B$, $N$, and any parameters of
$\varphi$, which exists by \cite[Theorem
VII.2.10]{Simpson-2009}. Using $\Pi^1_1$ comprehension,
we may form the set $\{i : \M \models \varphi(M_i)\}$.

Working outside $\M$, we build an increasing sequence $\langle B_i :
i \in \N\rangle$ of $N$-closed extensions of~$C$. Let $B_0 = C$.
Given~$i$, ask whether there is a $j$ such that
\begin{itemize}
\item $M_j$ is an $N$-closed subset of $A$;
\item $B_i \subseteq M_j$;
\item $i \in M_j$;
\item and $\varphi(M_j)$ holds.
\end{itemize} 
If there is, choose the least such $j$ and let
$B_{i+1} = M_j$. Otherwise, let $B_{i+1} = B_i$.  Finally, let $B =
\bigcup_{i\in \N} B_i$.

Because the inductive construction only asks arithmetical questions
about $\M$, it can be carried out in $\Pi^1_1\text{-}\CA$, and so
$\Pi^1_1\text{-}\CA$ proves that $B$ exists. Clearly $C \subseteq B
\subseteq A$.  An arithmetical induction shows that for all $i \in
\N$, $\varphi(B_i)$ holds and $B_i$ is $N$-closed.  Therefore, the
formalized version of Proposition~\ref{p:fcmonotone} shows that
$\varphi(B)$ holds, and the analogue of Proposition~\ref{p:clmonotone}
to nondeterministic finitary closure operators shows that $B$ is
$N$-closed.

Now suppose that for some $i \in A$, $B \cup \{i\}$ is an $N$-closed
subset of $A$ extending $C$ and satisfying~$\varphi$.  Because $\varphi$
is $\Sigma^1_1$, and because $N$ is a sequence, the property
\begin{equation}\label{eq:betamod} 
(\exists X)[X \text{ is
$N$-closed} \land B_i \subseteq X \subseteq A \land i \in X \land
\varphi(X)]
\end{equation}
\noindent is expressible by a $\Sigma^1_1$ sentence, and $B \cup
\{i\}$ witnesses that it is true.  Thus, because $\M$ is a
$\beta$-model, this sentence must be satisfied by $\M$, which means that
some $M_j$ must also witness it. The inductive construction must
therefore have selected such an $M_j$ to be $B_{i+1}$, which means $i
\in B_{i+1}$ and hence $i \in B$.  It follows that $B$ is maximal.
\end{proof}

The next theorem shows that $\mathsf{NCE}$ for quantifier-free
formulas without parameters is already as strong as
$\Sigma^1_1\text{-}\FCP$ and $\Sigma^1_1\text{-}\AL$.  In particular,
in view of Corollary~\ref{c:alequiv}, it is considerably stronger than
$\AL$ for quantifier-free formulas.

\begin{thm}\label{t:ncereverse} 
For each $n \geq 1$, the following are equivalent over $\RCAo$:
\begin{enumerate}
\item $\Pi^1_1\text{-}\CA$;
\item $\Sigma^1_1\text{-}\mathsf{NCE}$;
\item $\Sigma^0_n\text{-}\mathsf{NCE}$;
\item $\QF\text{-}\mathsf{NCE}$.
\end{enumerate}
\end{thm}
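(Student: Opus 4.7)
The proof follows the cycle $(1) \Rightarrow (2) \Rightarrow (3) \Rightarrow (4) \Rightarrow (1)$, of which only the last implication is nontrivial. Indeed, $(1) \Rightarrow (2)$ is Proposition~\ref{p:nceprovable}; $(2) \Rightarrow (3)$ holds because every $\Sigma^0_n$ formula is also $\Sigma^1_1$; and $(3) \Rightarrow (4)$ follows since every quantifier-free parameter-free formula is $\Sigma^0_n$ for $n \geq 1$.

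For $(4) \Rightarrow (1)$, I plan to use the fact, provable in $\RCA$, that $\Pi^1_1\text{-}\CA$ is equivalent to the statement that for every sequence $\langle T_i : i \in \N\rangle$ of subtrees of $\N^{<\N}$, the set $\{i : T_i \text{ is well-founded}\}$ exists.  Fix such a sequence, which I may assume consists of nonempty trees since empty trees are trivially well-founded.  Choose computable injections $i \mapsto a_i$ and $(i,\sigma) \mapsto n_{i,\sigma}$ into $\N$ with all values distinct and different from $0$, and let $\varphi(X)$ be the quantifier-free parameter-free formula $0 \notin X$, which is visibly of finite character.  Define the nondeterministic finitary closure operator $N$ on $\N$ to consist of the pairs
\begin{align*}
  & \langle \{a_i\},\, \{n_{i,\langle\rangle}\}\rangle \quad \text{for each } i \in \N,\\
  & \langle \{n_{i,\sigma}\},\, \{n_{i,\sigma\frown k} : \sigma\frown k \in T_i\}\rangle \quad \text{for each non-terminal } \sigma \in T_i,\\
  & \langle \{n_{i,\sigma}\},\, \{0\}\rangle \quad \text{for each terminal } \sigma \in T_i.
\end{align*}
Applying $\QF\text{-}\NCE$ with $A = \N$ and initial set $C = \emptyset$ yields a maximal $N$-closed $X \subseteq \N$ with $0 \notin X$.

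The key claim is that $a_i \in X$ if and only if $T_i$ is ill-founded.  From this, $\{i : T_i \text{ is well-founded}\} = \{i : a_i \notin X\}$ exists by $\Delta^0_1$ comprehension.  For the forward direction, if $a_i \in X$ then $N$-closure forces $n_{i,\langle\rangle} \in X$, and by primitive recursion (at each stage, selecting the least child index forced into $X$) one obtains a sequence $\langle\rangle = \sigma_0 \prec \sigma_1 \prec \cdots$ in $T_i$ with each $n_{i,\sigma_k} \in X$; any terminal $\sigma_k$ would trigger $\langle \{n_{i,\sigma_k}\}, \{0\}\rangle$ and force $0 \in X$, so the sequence witnesses ill-foundedness of $T_i$.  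Conversely, assuming $T_i$ has an infinite path $\sigma_0 \prec \sigma_1 \prec \cdots$, the set $X' = X \cup \{a_i\} \cup \{n_{i,\sigma_k} : k \in \N\}$ still satisfies $\varphi$ (the added elements all differ from $0$), and a short case check over the three families of pairs shows it is $N$-closed: no terminal node element is added, closure of $a_i$ is witnessed by $n_{i,\sigma_0}$, and closure of each $n_{i,\sigma_k}$ is witnessed by $n_{i,\sigma_{k+1}}$.  By maximality of $X$, $X' = X$, so $a_i \in X$.

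The principal obstacle is the design of $N$: since a quantifier-free formula of finite character can only forbid a fixed finite set of numerals (here $\{0\}$), all the tree structure must be encoded into the closure operator, and the key device is to use $0$ as a single ``forbidden sink'' into which every terminal branch of every $T_i$ is funneled, so that maximality of $X$ amounts exactly to choosing, for each $i$, whether or not to embark on an infinite descent through $T_i$.  The rest is routine: the case analysis for $N$-closedness of $X'$ is the only substantive verification, and the path extraction in the forward direction is a primitive recursion available in $\RCA$.
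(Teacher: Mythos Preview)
Your argument for $(4) \Rightarrow (1)$ has a gap: the nondeterministic closure operator $N$ you define cannot be formed in $\RCAo$ as written. To present $N$ as a sequence you must, for each $\sigma \in T_i$, decide whether $\sigma$ is terminal in $T_i$ in order to know which of your last two clauses applies; this is a $\Pi^0_1$ question (``no child of $\sigma$ lies in $T_i$'') and is not in general decidable from the sequence $\langle T_i : i \in \N\rangle$. The paper handles exactly this point by splitting the reversal into two steps: first it observes that every deterministic finitary closure operator can be trivially recast as a nondeterministic one, so $\QF\text{-}\NCE$ implies $\QF\text{-}\AL$ and hence $\ACAo$ by Proposition~\ref{P:alqf_implies_aca}; then, working in $\ACAo$, it carries out essentially your tree construction, explicitly noting that $N$ is formed by arithmetical comprehension.

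A more direct repair, which the paper does not take, is to merge your second and third clauses into the single clause
\[
\langle \{n_{i,\sigma}\},\; \{0\} \cup \{n_{i,\sigma\frown k} : \sigma\frown k \in T_i\}\rangle \quad \text{for every } \sigma \in T_i.
\]
Here the second component is always nonempty and is uniformly $\Delta^0_1$ in $(i,\sigma)$, so this $N$ exists already in $\RCAo$; since $0 \notin X$, closure still forces a genuine child of $\sigma$ into $X$ whenever one exists, and at a terminal node the only available element is $0$, yielding the same contradiction. With either fix the remainder of your argument is correct and essentially coincides with the paper's.
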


\begin{proof} 
We have already proved (1) implies (2), and it is
obvious that (2) implies (3) and (3) implies (4). The reversal of (4)
to (1) splits into two steps.

For the first step, note that $\RCAo$ can convert any finitary closure
operator $D$ into a corresponding nondeterministic closure operator
$N$ such that the notions of $D$-closed and $N$-closed coincide (note
that this is the opposite of what was discussed in Remark
\ref{rem:nce_remark}).  Therefore $\mathsf{NCE}$ for quantifier-free
formulas implies $\ACAo$ over~$\RCAo$ by
Proposition~\ref{P:alqf_implies_aca}.

Next, for the second step, we work in $\ACAo$.  Let $\langle T_i : i \in
\N\rangle$ be a sequence of subtrees of $\N^{<\N}$. To prove
$\Pi^1_1\text{-}\CA$, it is sufficient to
form the set of $i\in\setN$ such that $T_i$ has an infinite path~\cite[Lemma
VI.1.1]{Simpson-2009}.  Let $A$ be the set of all pairs $\langle i,
\sigma \rangle$ such that $\sigma \in T_i$, along with one
distinguished element $z$ that is not a pair.  Let $\varphi(X)$ be the
formula $z \not \in X$, which has no parameters provided that $z$ is
coded by a standard natural number. Clearly, $\varphi$ has the finite
character property.

Write $A - \{z\} = \{a_i : i \in \N \}$, and define a nondeterministic
finitary closure operator $N = \langle N_i : i \in \N \rangle$ as
follows.  For each $j \in \N$, if $a_j = \langle i,\sigma \rangle$,
then
\begin{itemize}
\item if $\sigma$ is a dead end in $T_i$, let $N_j = \langle \{\langle
i, \sigma \rangle\}, \{z\}\rangle$;

\item if $\sigma$ is not a dead end in $T_i$, let
\[ 
N_j = \langle \{\langle i ,\sigma \rangle \}, \{\langle i, \tau
\rangle : \tau \in T_i \land \tau \succ \sigma \land |\tau| =
|\sigma| + 1\}\rangle.
\]

\end{itemize} 
Notice that $N$ can be formed by arithmetical
comprehension.

Suppose $B$ is an $N$-closed subset of $A$ that satisfies $\varphi$
(i.e., does not contain $z$). Then, for any $i$, whenever $\langle i,
\sigma \rangle$ is in $B$ there is some immediate extension $\tau$ of
$\sigma$ in $T_i$ such that $\langle i, \tau \rangle$ is in~$B$. Thus
if $\langle i, \sigma \rangle$ is in $B$ then there is an infinite
path through $T_i$ extending~$\sigma$.  So in particular, if $\langle
i, \emptyset\rangle$ is in $B$ then $T_i$ has an infinite path.
Conversely, if $f$ is an infinite path through $T_i$, then $B \cup \{
\langle i, f \res n \rangle : n \in \N \}$ is $N$-closed and satisfies
$\varphi$.

Because $\emp$ is $N$-closed and satisfies $\varphi$, we may apply
$\mathsf{NCE}$ for quantifier-free formulas to get a maximal extension
of it within~$A$.  By the previous paragraph and the maximality of
$B$, $T_i$ has a path if and only if $\langle i, \emptyset\rangle \in
B$. Thus, the set of $i$ such that $T_i$ has a path exists, as
desired.
\end{proof}

Our final results characterize the strength of
$\mathsf{NCE}$ for formulas higher in the analytical
hierarchy. 

\begin{prop}\label{P:ncehigher}
For each $n \geq 1$,
\begin{enumerate}
\item $\Sigma^1_n\text{-}\mathsf{NCE}$ and $\Pi^1_n\text{-}\mathsf{NCE}$
are provable in  $\Pi^1_n\text{-}\mathsf{CA}_0$;
\item $\Delta^1_n\text{-}\mathsf{NCE}$ is provable in
$\Delta^1_n\text{-}\mathsf{CA}_0$.
\end{enumerate}
\end{prop}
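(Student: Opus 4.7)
The plan is to generalize the proof of Proposition~\ref{p:nceprovable} to higher levels of the analytical hierarchy by performing a uniform finite-character reduction: absorb the complexity of $\varphi$ into a set parameter~$S$, after which the original $\beta$-model argument applies essentially verbatim.

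Given a formula $\varphi(X)$ of finite character in class $\Gamma$ (one of $\Sigma^1_n$, $\Pi^1_n$, or $\Delta^1_n$), I would first use $\Gamma$-$\CA$ to form the set $S = \{e : \chat{\varphi}(e)\text{ holds}\}$, where $\chat{\varphi}$ comes from Lemma~\ref{l:finiteset} and $e$ ranges over canonical indices of finite sets. By finite character, $\varphi(X)$ is equivalent to the $\Pi^0_1$ statement (relative to $S$) that every finite subset of $X$ has its canonical index in~$S$. With $S$ in hand, I would fix a countable coded $\beta$-model $\M$ containing $S$, $A$, $C$, $N$, and the parameters of $\varphi$, and then reproduce the construction from Proposition~\ref{p:nceprovable} verbatim, replacing $\varphi$ with its $\Pi^0_1(S)$ equivalent: an arithmetical (in $\M$ and $S$) search at stage $i$ for some $M_j \supseteq B_i$ containing $i$, lying in $A$, $N$-closed, and with all finite subsets in $S$; set $B_{i+1}$ to the first such $M_j$ if any, else $B_i$; then take $B = \bigcup_i B_i$. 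The crucial existence condition analogous to~(\ref{eq:betamod}) becomes $\Sigma^1_1$ with $S$ as a parameter, so ordinary $\beta$-absoluteness (available in $\Pi^1_1$-$\CA$) handles it. For part~(1), working in $\Pi^1_n$-$\CA$ supplies both $\Sigma^1_n$- and $\Pi^1_n$-comprehension to form $S$, and contains $\Pi^1_1$-$\CA$ for the $\beta$-models. For part~(2), the same plan goes through in $\Delta^1_n$-$\CA$, since $\chat{\varphi}$ inherits the $\Delta^1_n$-definability of $\varphi$ and (for $n \geq 2$) $\Delta^1_n$-$\CA$ contains $\Pi^1_1$-$\CA$.

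The main obstacle lies in the $\Pi^1_n$ case of~(1): a naive adaptation of Proposition~\ref{p:nceprovable} would appear to require a $\beta_n$-model, because the existence assertion $(\exists X)[\varphi(X) \wedge \cdots]$ is $\Sigma^1_{n+1}$ when $\varphi$ is $\Pi^1_n$, and $\beta_n$-models are not immediately at hand in $\Pi^1_n$-$\CA$. The finite-character reduction to $S$ is precisely what collapses this to a $\Sigma^1_1(S)$ question, uniformly across $\Sigma^1_n$, $\Pi^1_n$, and $\Delta^1_n$, so that a single ordinary $\beta$-model argument suffices for all cases.
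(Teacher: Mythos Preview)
Your proposal is correct and follows essentially the same route as the paper: both arguments use Lemma~\ref{l:finiteset} and $\Gamma$-comprehension to form the set $S$ (which the paper calls $W$) of codes of finite sets satisfying $\chat{\varphi}$, and then observe that finite character makes $\varphi(X)$ equivalent to the arithmetical formula $\psi(X) \equiv (\forall n)[D_n \subseteq X \to n \in S]$. The only difference is packaging: having reduced to the arithmetical formula $\psi$, the paper simply invokes the already-established $\Sigma^1_1\text{-}\NCE$ (Proposition~\ref{p:nceprovable}) as a black box, whereas you reproduce the $\beta$-model construction inline. Your parenthetical ``for $n \geq 2$'' in part~(2) is apt, and the paper's proof sketch (``the proof of part~(2) being similar'') has the same lacuna at $n=1$, since it too passes through $\Sigma^1_1\text{-}\NCE$ and hence $\Pi^1_1\text{-}\CA$.
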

\begin{proof}

We prove part (1), the proof of part (2) being similar.  Let $\varphi(X)$
be a $\Sigma^1_n$ formula of finite character, 
respectively a $\Pi^1_n$ such formula.  Let
$N$ be a nondeterministic closure operator,  let $A$
be any set, and let $C$ be an $N$-closed subset of $A$ such
that $\varphi(C)$ holds.

By Lemma~4.5, let $\widehat{\varphi}$ be a $\Sigma^1_n$
formula, respectively a $\Pi^1_n$ formula, such that
\[
(\forall X)(\forall n)[X = D_n \to (\varphi(X)
\leftrightarrow \widehat{\varphi}(n))].
\]
We may use $\Pi^1_n$ comprehension to
form the set $W = \{ n : \widehat{\varphi}(n)\}$. Define 
$\psi(X)$ to be the arithmetical formula $(\forall n)[D_n \subseteq X \to n \in
W]$. 

We claim that for  every set $X$, $\psi(X)$ holds if and
  only if $\varphi(X)$ holds.  The definitions of $W$ and
$\psi$
ensure that $\psi(X)$ holds if and only if $\varphi(D_n)$ holds for
every finite $D_n \subseteq X$, which is true if and only if
$\varphi(X)$ holds because $\varphi$ has finite character.  This establishes
the claim.

By the claim, $\psi$ is a property of finite character and
$\psi(C)$ holds. Using $\Sigma^1_1\text{-}\mathsf{NCE}$, which is provable in
$\Pi^1_1\text{-}\mathsf{CA}_0$ by Proposition \ref{p:nceprovable} and thus  in
$\Pi^1_n\text{-}\mathsf{CA}_0$, there is a maximal
$N$-closed subset $B$ of $A$ extending~$C$ with
property~$\psi$. Again by the claim, $B$ is a maximal
$N$-closed subset of $A$ extending $B$ with
property~$\varphi$.
\end{proof}

\begin{cor}
The following are provable in $\RCAo$:
\begin{enumerate}
\item for each $n \geq 1$, $\Delta^1_n\text{-}\CA$ is equivalent to
$\Delta^1_n\text{-}\mathsf{NCE}$;
\item for each $n \geq 1$, $\Pi^1_n\text{-}\CA$ is equivalent to
$\Pi^1_n\text{-}\mathsf{NCE}$ and to $\Sigma^1_n\text{-}\mathsf{NCE}$;
\item $\mathsf{Z}_2$ is equivelent to $\NCE$.
\end{enumerate}
\end{cor}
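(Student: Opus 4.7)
My plan is to combine the upper bounds of Proposition~\ref{P:ncehigher} with reversals obtained by a trivial reduction of $\NCE$ to $\FCP$, after which I can invoke the $\FCP$-based characterizations of Theorem~\ref{thm_main_fcp}(2), as summarized in Corollary~\ref{c:fcpstrength}. For the forward directions in parts (1) and (2), I would simply apply Proposition~\ref{P:ncehigher}(1) and~(2) verbatim. For the forward direction of (3), I would observe that the argument of Proposition~\ref{p:nceprovable} (or of Proposition~\ref{P:ncehigher}) goes through for an arbitrary $\Lang_2$-formula $\varphi$, provided only that the set $W = \{n : \widehat{\varphi}(n)\}$ can be formed; in $\mathsf{Z}_2$ this is immediate from full comprehension.

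For the reversals the key observation is that every instance of $\Gamma$-$\FCP$ can be read off as an instance of $\Gamma$-$\NCE$. Indeed, given a formula $\varphi$ of finite character in $\Gamma$ and a set $A$, I would take $N = \emp$ as the nondeterministic closure operator, so that the condition ``for each $\langle F,S \rangle \in N$, if $F \subseteq X$ then $X \cap S \neq \emp$'' is vacuous and every subset of $A$ is $N$-closed. Moreover $C = \emp$ serves as an $N$-closed subset of $A$ satisfying $\varphi$, since $\varphi(\emp)$ holds by definition of finite character. A maximal $N$-closed subset of $A$ extending $\emp$ and satisfying $\varphi$ is therefore nothing more than a $\subseteq$-maximal subset of $A$ satisfying $\varphi$. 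Consequently $\Gamma$-$\NCE$ implies $\Gamma$-$\FCP$ for any syntactic class $\Gamma$ to which the schemes apply (and similarly the full $\NCE$ scheme implies full $\FCP$). With this reduction in hand, the reversals in (1), (2), and (3) follow immediately from parts (3), (4), and (5) of Corollary~\ref{c:fcpstrength}, respectively.

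I do not expect a genuine obstacle here: once the trivial reduction from $\FCP$ to $\NCE$ via $N = \emp$ is observed, the corollary is a direct consequence of Proposition~\ref{P:ncehigher} and the known $\FCP$-based characterizations. The only point requiring brief verification is that the classes $\Delta^1_n$, $\Pi^1_n$, and $\Sigma^1_n$ are closed under universal number quantification, which is needed inside the $\FCP$-reversal to conclude that the auxiliary formula $\psi(X) = (\forall n)[n \in X \to \varphi(n)]$ remains in the same class as $\varphi$; this closure is standard and is already used in the proof of Theorem~\ref{thm_main_fcp}(2).
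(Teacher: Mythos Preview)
Your proposal is correct and mirrors the paper's own proof: forward directions come from Proposition~\ref{P:ncehigher}, and the reversals go through the trivial implication from $\Gamma\text{-}\NCE$ to $\Gamma\text{-}\FCP$ followed by Corollary~\ref{c:fcpstrength}. The only cosmetic wrinkle is that a nondeterministic closure operator is defined as a \emph{sequence} rather than a set, so instead of $N=\emptyset$ you should use a vacuous operator such as $N_i=\langle\{i\},\{i\}\rangle$, under which every set is $N$-closed; this does not affect the argument.
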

\begin{proof}
The implications from $\Delta^1_n\text{-}\CA$, $\Pi^1_n\text{-}\CA$, and
$\mathsf{Z}_2$ follow by Proposition~\ref{P:ncehigher}. On the other hand, 
each restriction of $\NCE$ trivially implies the corresponding restriction of $\FCP$,
so the reversals follow by Corollary~\ref{c:fcpstrength}.
\end{proof}

\begin{rem}
  The characterizations in this section shed light on the role of the
  closure operator in the principles $\AL$ and $\NCE$. For
  $n \geq 1$, we have shown that $\Sigma^1_n\text{-}\FCP$,
  $\Sigma^1_n\text{-}\AL$, and $\Sigma^1_n\text{-}\NCE$ are
  all equivalent over $\RCAo$. However, $\QF\text{-}\FCP$ is
  provable in $\RCAo$, $\QF\text{-}\AL$ is equvalent to
  $\ACAo$ over $\RCAo$, and $\QF\text{-}\NCE$ is equivalent
  to $\Pi^1_1\text{-}\CA$ over $\RCAo$. Thus the closure
  operators in the stronger principles serve as a sort of
replacement for arithmetical quantification in the case of
$\AL$, and for $\Sigma^1_1$ quantification in the case of
$\NCE$. This allows these principles to have greater strength
than might be suggested by the property of finite character
alone.   At higher levels of the analytical hierarchy, the
principles become equivalent because the 
complexity of the property of finite character overtakes the
complexity of the closure notions.
\end{rem}
\section{Questions}

In this section we summarize the principal questions left over from
our investigation.  
These concern the precise strength of $F\IP$ and
the principles
$\overline{D}_n\IP$.  While we have closely located these principles'
position in the structure of statements lying between $\RCA$ and
$\ACA$, we do not know the answers to the following questions.

\begin{quest} Does $\overline{D}_2\IP$ imply $F\IP$ over $\RCA$?
Does $\overline{D}_n\IP$ imply $\overline{D}_{n+1}\IP$?
\end{quest}

\begin{quest}\label{Q:AMT_to_FIP?}  Does $\AMT$ imply
$\overline{D}_2\IP$ over $\RCA$?  Does $\OPT$ imply $\overline{D}_2\IP$?
\end{quest}

\noindent By Proposition~\ref{P:Gen_to_FIP}, the first part of the
Question~\ref{Q:AMT_to_FIP?} has an affirmative answer over $\RCA +
\mathsf{I}\Sigma^0_2$.  For the second part, it may be easier to ask
whether the implication can at least be shown to hold in $\om$-models.
An affirmative answer would likely follow from an affirmative answer
to the following question.

\begin{quest} Given a computable nontrivial family $A$, does every set
of hyperimmune degree compute a maximal subfamily of $A$ with the $F$
intersection property (or at least with the $\overline{D}_2$
intersection property)?
\end{quest}

\noindent We conjecture the answer to be no.

Our final question is less directly related to
our investigation. We mention it in view of Proposition \ref{p:alextend}
above.

\begin{quest}
What is the strength of the principle asserting that every proper ideal on
a countable join-semilattice extends to a maximal proper ideal?
\end{quest}

\noindent  This question is further motivated by work of
Turlington~\cite[Theorem 2.4.11]{Turlington-2010}
on the similar problem of constructing prime
ideals on computble lattices.  However, because a maximal ideal on a
countable lattice need not be a prime ideal, Turlington's results do not
directly resolve our question.

\bibliographystyle{amsplain} \bibliography{Choice}

\end{document}